\setlist{  
  listparindent=\parindent,
  parsep=0pt,
}
\theoremstyle{plain}
\newtheorem{thm}{Theorem}[section]
\newtheorem{prop}[thm]{Proposition}
\newtheorem{lemma}[thm]{Lemma}
\newtheorem{cor}[thm]{Corollary}
\theoremstyle{definition}
\newtheorem{mydef}[thm]{Definition}
\newtheorem{remark}[thm]{Remark}
\numberwithin{equation}{section} 
\DeclarePairedDelimiter\ipp{\langle}{\rangle}
\DeclarePairedDelimiter{\paren}{\lparen}{\rparen}
\DeclarePairedDelimiter{\jp}{\langle}{\rangle}
\DeclareMathOperator{\supp}{supp}
\DeclareMathOperator{\diam}{diam}
\newcommand{\M}{{\mathcal{M}}}
\newcommand{\p}{{\partial}}
\renewcommand{\d}{\delta}
\newcommand{\R}{{\mathbb{R}}}
\newcommand{\C}{{\mathbb{C}}}
\newcommand{\N}{{\mathbb{N}}}
\newcommand{\Z}{{\mathbb{Z}}}
\renewcommand{\H}{{\mathcal{H}}}
\renewcommand{\P}{{\mathcal{P}}}
\newcommand{\T}{{\mathbb{T}}}
\newcommand{\E}{\mathbf{E}}
\newcommand{\g}{{\mathfrak{g}}}
\newcommand{\G}{{\mathfrak{G}}}
\newcommand{\Fr}{{\mathfrak{F}}}
\newcommand{\I}{{\mathcal{I}}}
\newcommand{\Sc}{{\mathcal{S}}}
\renewcommand{\M}{{\mathcal{M}}}
\newcommand{\tl}{\tilde}
\newcommand{\D}{\Delta}
\newcommand{\ph}{\phantom{=}}
\newcommand{\nn}{\nonumber}
\newcommand{\ol}{\overline}
\newcommand{\ul}{\underline}
\newcommand{\ux}{\underline{x}}
\newcommand{\usig}{\underline{\sigma}}
\newcommand{\ue}{\underline{\eta}}
\newcommand{\ur}{\underline{r}}
\newcommand{\ep}{\epsilon}
\newcommand{\vep}{\varepsilon}
\newcommand{\al}{\alpha}
\newcommand{\be}{\beta}
\newcommand{\ga}{\gamma}
\newcommand{\et}{\eta}
\newcommand{\om}{\omega}
\newcommand{\wh}{\widehat}
\newcommand{\ueta}{\underline{\eta}}
\renewcommand{\ueta}{\underline{\eta}}
\newcommand{\F}{{\mathcal{F}}}
\let\oldtocsection=\tocsection
\let\oldtocsubsection=\tocsubsection
\let\oldtocsubsubsection=\tocsubsubsection
\renewcommand{\tocsection}[2]{\hspace{0em}\oldtocsection{#1}{#2}}
\renewcommand{\tocsubsection}[2]{\hspace{1em}\oldtocsubsection{#1}{#2}}
\renewcommand{\tocsubsubsection}[2]{\hspace{2em}\oldtocsubsubsection{#1}{#2}}
\begin{document}

\title[Mean-Field Limit of Stochastic Point Vortices]{The Mean-Field Limit of Stochastic Point Vortex Systems with Multiplicative Noise}

\author[M. Rosenzweig]{Matthew Rosenzweig}
\address{  
Department of Mathematics\\ 
Massachusetts Institute of Technology\\
Headquarters Office\\
Simons Building (Building 2), Room 106\\
77 Massachusetts Ave\\
Cambridge, MA 02139-4307}
\email{mrosenzw@mit.edu}

\begin{abstract}
In \cite{FGP2011}, Flandoli, Gubinelli, and Priola proposed a stochastic variant of the classical point vortex system of Helmholtz \cite{Helmholtz1858} and Kirchoff \cite{Kirchoff1876} in which multiplicative noise of transport-type is added to the dynamics. An open problem in the years since is to show that in the mean-field scaling regime, in which the circulations are inversely proportional to the number of vortices, the empirical measure of the system converges to a solution of a two-dimensional Euler vorticity equation with multiplicative noise. By developing a stochastic extension of the modulated-energy method of Serfaty \cite{Serfaty2017, Serfaty2020} and Duerinckx \cite{Duerinckx2016} for mean-field limits of deterministic particle systems and by building on ideas introduced by the author \cite{Rosenzweig2020_PVMF, Rosenzweig2020_CouMF} for studying such limits at the scaling-critical regularity of the mean-field equation, we solve this problem under minimal assumptions. 
\end{abstract}

\maketitle

\section{Introduction}
\label{sec:intro}
\subsection{Point vortices}
\label{ssec:intro_PV}
Investigating the regularizing effect of multiplicative noise for ideal fluid flow, Flandoli, Gubinelli, and Priola \cite{FGP2011} proposed a stochastic variant of the classical point vortex model of Helmholtz \cite{Helmholtz1858} and Kirchoff \cite{Kirchoff1876} of the form
\begin{equation}
\label{eq:SPVM}
\begin{cases}
\displaystyle dx_{i,t} = \sum_{1\leq j\leq N; j\neq i}a_j\nabla^\perp\g(x_{i,t}-x_{j,t})dt + \sum_{k=1}^\infty \sigma_k(x_{i,t})\circ dW^k\\
\displaystyle x_{i,t}|_{t=0} = x_i^0
\end{cases}
\qquad i\in\{1,\ldots,N\},
\end{equation}
where $N$ is the number of vortices, $a_1,\ldots,a_N\in\R\setminus\{0\}$ are the vortex intensities, $x_1^0,\ldots,x_N^0\in\R^2$ are the pair-wise distinct initial positions, $\g(x)\coloneqq -\frac{1}{2\pi}\ln|x|$ is the Coulomb potential, and $\nabla^\perp=(-\p_{x_2},\p_{x_1})$. The stochastic perturbation of the deterministic dynamics is given by a random advection described by the smooth, divergence-free vector fields $\sigma_k$ and independent real Brownian motions $W^k$. The notation $\circ$ denotes the Stratonovich product. We refer to the stochastic differential equation (SDE) \eqref{eq:SPVM} as the \emph{stochastic point vortex model}. Remarkably, Flandoli et al. \cite{FGP2011} showed that the system \eqref{eq:SPVM} posed on the torus $\T^2$ has the property of full well-posedness, assuming the vector fields $\sigma_k$ satisfy a H\"ormander condition: for every choice of intensities and initial positions, there is a unique, global strong solution to \eqref{eq:SPVM}. This result for the stochastic dynamics is in stark contrast to the deterministic case ($\sigma_k\equiv 0$), for which it is easy to construct examples of initial configurations which merge together in finite time (e.g., see \cite[Section 4.2]{MP2012book}).

Solutions of the system \eqref{eq:SPVM} correspond to a special class of weak solutions to a two-dimensional (2D) \emph{stochastic incompressible Euler equation}, written in vorticy form as
\begin{equation}
\label{eq:SEul_S}
\begin{cases}
\p_t\xi + u\cdot\nabla\xi +\sum_{k=1}^\infty \sigma_k\cdot\nabla\xi\circ \dot{W}^k = 0\\
u=\nabla^\perp\g\ast\xi \\
\xi|_{t=0}=\xi^0
\end{cases} \qquad (t,x)\in [0,\infty)\times\R^2.
\end{equation}
Indeed, the empirical measure
\begin{equation}
\label{eq:EM}
\xi_N(t,x) \coloneqq \sum_{i=1}^N a_i\d_{x_i(t)}(x),
\end{equation}
satisfies the so-called \emph{weak vorticity formulation} of equation \eqref{eq:SEul_S} where the nonlinearity has been renormalized so as to remove the infinite self-interaction between the point vortices (see, for example, \cite{Schochet1995}). The equation \eqref{eq:SEul_S} corresponds to the well-known partial differential equation (PDE) for the evolution of the vorticity of the 2D incompressible Euler equation with an added stochastic transport term. Paralleling results for the deterministic equation, Brze\'{z}niak, Flandoli, and Maurelli \cite{BFM2016} have shown that weak solutions, and by implication classical solutions, to \eqref{eq:SEul_S} are globally well-posed for essentially bounded initial data in $L^1$ (cf. \cite{Wolibner1933, Yudovich1963}),\footnote{Strictly speaking, Brze\'{z}niak et al. consider periodic solutions (i.e. $\R^2$ is replaced by $\T^2$); however, their work can be adapted to treat case of Euclidean space considered in this article.} and Brze\'{z}niak and Maurelli \cite{BM2019} have shown that weak solutions still exist even starting from certain measures in $H^{-1}$ (cf. \cite{Delort1991, Majda1993, Schochet1995, Poupaud2002}), however it is unknown whether such measure-valued solutions are unique. We review the notion of solutions to this SPDE in \cref{ssec:pre_Eul}.

An open problem in the area of stochastic particle systems \cite{JW2017_survey} is to rigorously establish the SPDE \eqref{eq:SEul_S} as an \emph{effective description} of the dynamics of the SPVM \eqref{eq:SPVM} when the number $N$ of vortices is very large. To make such a connection precise, we consider the \emph{mean-field} scaling regime where the magnitudes of the vortex intensities are inversely proportional to the number of vortices (i.e. $|a_i|=1/N$), so that the total energy per vortex pair is finite as $N\rightarrow\infty$ and that up to a random error, the velocity field experienced by a single vortex is proportional to the average of the fields generated by the remaining point vortices. We limit our attention to the repulsive setting in which the intensities are identically signed--without loss of generality, each $a_i=1/N$. Formally, we expect that
\begin{equation}
\label{eq:intro_mfc}
\xi_N \xrightarrow[N\rightarrow\infty]{} \xi,
\end{equation}
in a suitable topology, where $\xi$ is a solution to the stochastic Euler equation \eqref{eq:SEul_S}. We refer to \eqref{eq:intro_mfc} as \emph{mean-field convergence}. Note that this persistence of stochasticity as $N\rightarrow\infty$ for multiplicative noise is in contrast to the point vortex model with \emph{additive noise}
\begin{equation}
\label{eq:SPVM_AN}
dx_{i,t} = \frac{1}{N}\sum_{{1\leq j\leq N}\atop{j\neq i}} \nabla^\perp\g(x_{i,t}-x_{j,t})dt + \sqrt{2\nu}d\tl{W}_t^i \qquad i\in\{1,\ldots,N\},
\end{equation}
where $\nu>0$ and $\{\tilde{W}^i\}_{i=1}^\infty$ are independent Brownian motions in $\R^2$. Indeed, the empirical measure for \eqref{eq:SPVM_AN} has been rigorously shown to converge in a suitable sense to a solution $\xi$ of the \emph{deterministic 2D Navier-Stokes equation} \cite{Osada1985, Osada1987, FHM2014, JW2018} (see also the recent works \cite{BJW2019, BJW2020} for further extensions)
\begin{equation}
\p_t\xi+u\cdot\nabla\xi = \nu\Delta\xi.
\end{equation}
 
There has been extensive work on studying the mean-field dynamics of the deterministic point vortex system (i.e. $\sigma_k\equiv 0$) beginning with work of Schochet \cite{Schochet1996} and continuing in more recent years with work of Jabin and Wang \cite{JW2018}, Serfaty \cite{Serfaty2020}, and the author \cite{Rosenzweig2020_PVMF}. We also mention that there has been extensive work on the related problem of convergence for and stability of vortex approximation methods for the deterministic Euler and related equations, e.g. see \cite{Rosenhead1931, Westwater1935, BM1982, GHL1990, GH1991, LX1995, LX2001, LZ2000} and \cite[Chapter 6]{MB2002}. But there has been little work studying the mean-field problem for the stochastic system \eqref{eq:SPVM}. To the best of our knowledge, the only result is by Coghi and Maurelli \cite{CM2020}, which shows that mean-field convergence holds in the periodic case if the Biot-Savart kernel is truncated to length scales much larger than the typical inter-vortex distance $N^{-1/2}$. Thus, showing mean-field convergence for the system \eqref{eq:SPVM}, and more generally systems of the form \eqref{eq:SPVM} with possibly more singular Biot-Savart kernels, is an open problem. 

\subsection{Main results}
\label{ssec:intro_MR}
We resolve the problem of rigorously establishing the stochastic Euler equation \eqref{eq:SEul_S} as the mean-field limit of the stochastic point vortex system \eqref{eq:SPVM}. As far as we are aware, this is the first such result for stochastic particle systems with singular interactions and multiplicative noise. Moreover, our result establishes the missing ``rigorous link,'' to borrow a phrase from Flandoli et al. \cite{FGP2011}, between the SPDE \eqref{eq:SEul_S} and the system of SDEs \eqref{eq:SPVM}, remaining to be established following Flandoli et al.'s work.

To state the theorem, we first clarify some assumptions for the underlying dynamics. We assume that the vector fields $\{\sigma_k\}_{k=1}^\infty$ are smooth, divergence-free, and satisfy the bounds
\begin{equation}
\begin{split}
\|\usig\|_{\ell_k^2 L_x^\infty(\N\times\R^2)}^2 &\coloneqq \sum_{k=1}^\infty \|\sigma_k\|_{L^\infty(\R^2)}^2 <\infty,\\
\|\nabla\usig\|_{\ell_k^2 L_x^\infty(\N\times\R^2)}^2 &\coloneqq \sum_{k=1}^\infty \|\nabla\sigma_k\|_{L^\infty(\R^2)}^2 < \infty.
\end{split}
\end{equation}
Furthermore, we assume that $\{W^k\}_{k=1}^\infty$ are independent real Brownian motions defined on and adapted to a fixed filtered probability space $(\Omega,\F, (\F_t)_{t\geq 0}, \mathbf{P})$ satisfying all the usual assumptions. We define the \emph{modulated energy}
\begin{equation}
\label{eq:intro_ME}
\Fr_N^{avg}(\ux_N(t),\xi(t)) \coloneqq \int_{(\R^2)^2\setminus\D_2}\g(x-y)d(\xi_N-\xi)(t,x)d(\xi_N-\xi)(t,y),
\end{equation}
where $\xi_N$ is the empirical measure from \eqref{eq:EM}, $\D_2$ is the diagonal of $(\R^2)^2$, and $\ux_N=(x_1,\ldots,x_N)$. We abuse notation by using the same symbol to denote both the measure and its density (when such a density exists).

\begin{thm}[Main result]
\label{thm:main}
There exists a constant $C>0$ such that the following holds. Let $\xi\in L^\infty(\Omega\times [0,T]; \P(\R^2)\cap L^\infty(\R^2))$ be a weak solution to the stochastic Euler equation \eqref{eq:SEul_S} with initial datum $\xi^0$ such that
\begin{equation}
\label{eq:main_ass}
\int_{\R^2}\ln\jp{x}d\xi^0(x) + \int_{(\R^2)^2}\ln\jp{x-y} d(\xi^0)^{\otimes 2}(x,y) < \infty.
\end{equation}
For $N\in\N$, let $\ux_N=(x_1,\ldots,x_N)$ be a strong solution to the stochastic point vortex model \eqref{eq:SPVM} with initial datum $\ux_N^0$. If for given $t\in [0,T]$, $N\in\N$ is sufficiently large so that $(\ln N)/N\leq \min\{e^{-1},\|\xi^0\|_{L^\infty(\R^2)}^{-1}\}$ and
\begin{equation}
\label{eq:intro_N_con}
\begin{split}
&C\paren*{\|\xi^0\|_{L^\infty(\R^2)} + \|\nabla\usig\|_{\ell_k^2 L_x^\infty(\N\times\R^2)}^2}t \\
&< \ln\ln\paren*{|\Fr_N^{avg}(\ux_N^0,\xi^0)| + \frac{C t (\|\xi^0\|_{L^\infty(\R^2)}+\|\nabla\usig\|_{\ell_k^2 L_x^\infty(\N\times\R^2)}^2)(\ln N)^2}{N}}^{-1},
\end{split}
\end{equation}
then $\Fr_N^{avg}(\ux_N(t),\xi(t))$ satisfies the inequality
\begin{equation}
\label{eq:main}
\begin{split}
&\E\paren*{|\Fr_N^{avg}(\ux_N(t),\xi(t))|} \\
&\leq \paren*{|\Fr_N^{avg}(\ux_N^0,\xi^0)| + \frac{C t (\|\xi^0\|_{L^\infty(\R^2)}+\|\nabla\usig\|_{\ell_k^2 L_x^\infty(\N\times\R^2)}^2)(\ln N)^2}{N}}^{e^{-Ct(\|\xi^0\|_{L^\infty(\R^2)} + \|\nabla\usig\|_{\ell_k^2 L_x^\infty(\N\times\R^2)}^2)}}.
\end{split}
\end{equation}
\end{thm}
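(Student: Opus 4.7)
The plan is to extend Serfaty's modulated-energy method to the stochastic setting by computing the stochastic differential of $t \mapsto \Fr_N^{avg}(\ux_N(t),\xi(t))$ via It\^o's formula, and then closing a nonlinear Gr\"onwall argument. Splitting $\Fr_N^{avg}$ into its particle-particle, particle-field, and field-field contributions, I would apply It\^o's formula to the particle pieces using the SDE \eqref{eq:SPVM} and use the Stratonovich weak formulation of the SPDE \eqref{eq:SEul_S} for the remaining pieces. The crucial structural observation is that the \emph{same} Brownian motions $\{W^k\}$ drive both systems, so that the Stratonovich transport noise acts as a common random change of coordinates on the particles and on the limiting vorticity. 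When the three contributions are combined, I expect the pure noise pieces to assemble into a single martingale whose integrand is bilinear in $\xi_N - \xi$ and involves the increments $\sigma_k(x) - \sigma_k(y)$ paired against $\nabla^\perp\g(x-y)$.

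After Stratonovich-to-It\^o conversion, the drift of $\Fr_N^{avg}$ decomposes into the classical Serfaty commutator
\[
\int_{(\R^2)^2 \setminus \D_2} \bigl(u(x) - u(y)\bigr) \cdot \nabla^\perp\g(x-y) \, d(\xi_N - \xi)^{\otimes 2}(x,y),
\]
coming from the Biot-Savart interaction, plus an It\^o correction of the schematic form
\[
\frac{1}{2}\int_{(\R^2)^2 \setminus \D_2} \sum_{k=1}^\infty \bigl(\sigma_k(x) - \sigma_k(y)\bigr)^{\otimes 2} : \nabla^{\otimes 2}\g(x-y) \, d(\xi_N - \xi)^{\otimes 2}(x,y).
\]
Both are bilinear forms in $\xi_N - \xi$ whose kernels admit the pointwise bound obtained from $|\sigma_k(x) - \sigma_k(y)| \leq \|\nabla\sigma_k\|_{L^\infty}|x-y|$ against $|\nabla^{\otimes 2}\g(x-y)| \lesssim |x-y|^{-2}$, so the scaling-critical stress-tensor estimate developed by the author in \cite{Rosenzweig2020_PVMF} applies and bounds each by a constant multiple of $\bigl(\|\xi\|_{L^\infty} + \|\nabla\usig\|_{\ell_k^2 L_x^\infty}^2\bigr)\bigl(|\Fr_N^{avg}| + (\ln N)^2/N\bigr)$, with the $(\ln N)^2/N$ error absorbing the self-interactions that the diagonal exclusion removes and restoring near-positivity of the modulated energy.

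After a standard localization argument to dispose of the martingale upon taking expectation, I arrive at a log-log Gr\"onwall inequality
\[
\frac{d}{dt}F(t) \leq C\bigl(\|\xi^0\|_{L^\infty} + \|\nabla\usig\|_{\ell_k^2 L_x^\infty}^2\bigr) F(t) \, \bigl|\ln F(t)\bigr|,
\]
where $F(t) \coloneqq \E|\Fr_N^{avg}(\ux_N(t),\xi(t))| + \tfrac{Ct(\|\xi^0\|_{L^\infty} + \|\nabla\usig\|_{\ell_k^2 L_x^\infty}^2)(\ln N)^2}{N}$. The smallness hypothesis \eqref{eq:intro_N_con} is exactly what ensures $F(t) < 1$, keeping $|\ln F|$ positive throughout; dividing through by $F|\ln F|$ and integrating yields $|\ln F(t)| \geq |\ln F(0)|\, e^{-Ct(\|\xi^0\|_{L^\infty} + \|\nabla\usig\|_{\ell_k^2 L_x^\infty}^2)}$, equivalent to the doubly exponential bound \eqref{eq:main}. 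The logarithmic moment hypothesis \eqref{eq:main_ass} controls the behavior at spatial infinity inherent to working on $\R^2$, since $\g$ grows like $-\ln|x|$.

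The main obstacle is rigorously justifying the stochastic differential computation: applying It\^o's formula to a functional of the modulated energy built from the singular logarithmic kernel $\g$ on particle pairs which may approach each other. Almost-sure non-collision of the vortices, guaranteed by the strong well-posedness result of \cite{FGP2011}, is required to make sense of the diagonal-excluded expressions throughout the evolution, and a regularization-by-mollification argument is likely needed to justify the formal It\^o calculation. Verifying that the It\^o correction assembles into precisely the clean bilinear form displayed above---so that the \emph{same} commutator/stress-tensor machinery controls it as the deterministic Serfaty term---is the technical heart of the proof; the hypothesis $\|\nabla\usig\|_{\ell_k^2 L_x^\infty} < \infty$ is what makes this control possible.
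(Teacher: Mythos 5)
Your high-level strategy matches the paper — It\^o calculus on the modulated energy, followed by a log-log Gr\"onwall/Osgood closure — and you correctly identify the terms that appear in the stochastic differential. But there is a critical gap where you assert that ``the scaling-critical stress-tensor estimate developed by the author in \cite{Rosenzweig2020_PVMF} applies and bounds each'' of the Serfaty commutator and the It\^o correction. The estimates of \cite{Rosenzweig2020_PVMF} (and Serfaty's earlier version) apply to kernels of the form $\nabla\g(x-y)\cdot(v(x)-v(y))$, whose usefulness hinges on an exact stress-tensor identity after integration by parts. The second-order kernel $\nabla^2\g(x-y):(v(x)-v(y))^{\otimes 2}$ has no such identity, and the pointwise bound you invoke (kernel bounded because $|v(x)-v(y)|^2 \lesssim \|\nabla v\|_{L^\infty}^2|x-y|^2$ against $|\nabla^2\g|\lesssim|x-y|^{-2}$) is far from sufficient; the proof requires the $L^2\to L^2$ operator bound for $\nabla T_{2,v}\nabla$ where $T_{2,v}$ has kernel $\nabla^2\g(x-y):(v(x)-v(y))^{\otimes 2}$. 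The paper explicitly states that \cref{prop:FOLLkprop} does \emph{not} apply to this term, and the whole new contribution of the paper is \cref{prop:SOkprop}, proved by recognizing $\nabla T_{2,v}\nabla$ as a Calder\'on $d$-commutator in the sense of Christ--Journ\'e (\cref{thm:CJ}, applied via \cref{prop:T_2v} in the appendix). Without this ingredient the argument cannot close, so your proposal is missing the technical heart of the theorem, precisely where you flag it as ``the technical heart'' but then presume the old machinery works.

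A secondary but genuine issue: you claim a ``standard localization argument'' disposes of the martingale upon taking expectation, but the quantity you Gr\"onwall is $\E\paren*{|\Fr_N^{avg}|}$, not $\E\paren*{\Fr_N^{avg}}$, and $r\mapsto|r|$ is not $C^2$. The paper works with the smooth regularization $\jp{\cdot}_\vep=(\vep^2+(\cdot)^2)^{1/2}$, whose It\^o expansion produces an additional quadratic-variation term (the fourth term in \eqref{eq:MR_ME_deriv}) that does \emph{not} vanish upon taking expectations. Controlling it requires choosing $\vep=(\ln N)/N$, bounding the integrand via \cref{prop:FOLkprop}, and exploiting convexity of $z\mapsto z^2$; this is the source of the $(\ln N)^2/N$ error in \eqref{eq:main}. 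Your plan does not account for this step, and without it the ``martingale vanishes under expectation'' claim is unjustified.
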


The modulated energy $\Fr_N^{avg}(\ux_N(t),\xi(t))$ controls the Sobolev norm $\|\xi_N-\xi\|_{H^{s}(\R^2)}$, for any $s<-1$, up to an additive error which vanishes as $N\rightarrow\infty$. From this Sobolev control, one also obtains that the modulated energy controls weak-* convergence up to an additive error. These results are shown in \cref{prop:bes_conv}. Therefore, the meat of \cref{thm:main} is the effective bound \eqref{eq:main} for the absolute mean of $\Fr_N^{avg}(\ux_N(t),\xi(t))$. In fact, an examination of the proof of \cref{thm:main} reveals that one can control moments of the modulated energy of arbitrarily high order.

As observed in \cite[Remark 1.2(c)]{Duerinckx2016}, weak-* convergence of the initial empirical measures $\xi_N^0$ to $\xi^0$ and convergence of the N-body energy to the Coulomb energy of $\xi^0$ imply that $\Fr_N^{avg}(\ux_N^0,\xi^0)$ tends to zero, as $N\rightarrow\infty$. Moreover, by randomizing the initial data $\ux_N^0$, so that for each $N\in\N$, $x_{1,N}^0,\ldots,x_{N,N}^0$ are i.i.d. $\R^2$-valued random variables with density $\xi^0$, one can show $\Fr_N^{avg}(\ux_N^0,\xi^0)$ tends to zero almost surely (a.s.). In fact, by combining this randomization with control on higher moments of $\Fr_N^{avg}(\ux_N(t),\xi(t))$, one can show from the Kolmogorov three series theorem that for every $t$ fixed, $\Fr_N^{avg}(\ux_N(t),\xi(t))$ converges to zero as $N\rightarrow\infty$, a.s.

Observe that if that the $\sigma_k$ are identically zero so that the dynamics are deterministic, then $L^\infty(\R^2)$ is a function space which is \emph{invariant} under the scaling
\begin{equation}
\xi(t,x) \mapsto \xi(t,\lambda x), \qquad \lambda>0
\end{equation}
preserving the solution class to equation \eqref{eq:SEul_S}. Thus, not only is $L^\infty(\R^2)$ a critical function space for the (global) well-posedness of the equation by the results of Yudovich \cite{Yudovich1963} in the deterministic case and Brze\'{z}niak et al. \cite{BFM2016} in the stochastic case, the $L^\infty(\R^2)$ norm is also almost surely a conserved quantity (see \cref{rem:con_LP}).

\begin{remark}
The qualitative assumption \eqref{eq:main_ass} for $\xi^0$ is to ensure that both the stream function $\g\ast\xi$ and the energy $\ipp{\xi, \g\ast\xi}$ of the solution $\xi$ with initial datum $\xi^0$ are almost surely both well-defined, as the reader may check. For example, the logarithmic growth bound holds if $\xi^0$ has compact support and is bounded. Furthermore, \eqref{eq:main_ass} ensures that the key quantity $\Fr_N^{avg}(\ux_N,\xi)$ is well-defined.
\end{remark}

\begin{remark}
In the statement of \cref{thm:main}, we have taken the initial data to be deterministic. However, the proof of \cref{thm:main} may be adapted to assume that $\xi^0$ is a random measure with finite average $L^1\cap L^\infty$ norm and Coulomb energy, which almost surely satisfies the growth condition in \eqref{eq:main_ass}, and the $\ux_N^0$ are randomly distributed according to a law such that average energy and moment of inertia are finite.
\end{remark}

\begin{remark}
\label{rem:per}
We have chosen to work on $\R^2$, and not on $\T^2$ as in the prior works \cite{FGP2011, BFM2016, CM2020} on the SPVM and stochastic Euler equation, in order to parallel our prior work \cite{Rosenzweig2020_PVMF} on the deterministic mean-field problem and because the interaction potential $\g$ is explicit on $\R^2$. We expect that one can adapt our proof to the periodic setting, in particular using transference results (e.g., see \cite[Section 4.3]{grafakos2014c}) to carry over the singular integral estimates discussed in \cref{sec:app}.
\end{remark}

\subsection{Road map of proofs}
\label{ssec:intro_RM}
Our proof of \cref{thm:main} builds on the \emph{modulated-energy method} as developed for deterministic analogues of the system \eqref{eq:SPVM} by Duerinckx, Serfaty, and the author in the aforementioned works \cite{Duerinckx2016, Serfaty2020, Rosenzweig2020_PVMF, Rosenzweig2020_CouMF}.\footnote{While the specific form of the modulated-energy method employed in this article is inspired by the aforementioned works of Serfaty \cite{Serfaty2020} and Duerinckx \cite{Duerinckx2016}, we mention that this method is prefigured in earlier works, including by Brenier \cite{Brenier2000} and Lin and Zhang \cite{LZ2006_semi}. We also mention that this method is similar to the relative-entropy method, which has been used in recent years by Jabin and Wang \cite{JW2016, JW2018} for mean-field limits of interacting particle systems, as well as by others in numerous physical contexts (e.g., see \cite{SR2009}).} This method exploits a weak-strong stability principle for noise-less equations of the form \eqref{eq:SEul_S}. It is quantitative and avoids a need for control of the microscopic dynamics in terms of particle trajectories. But to date its use has been largely limited to deterministic mean-field problems. Only very recently has this method been extended by Bresch, Jabin, and Wang \cite{BJW2019, BJW2020} to treat stochastic particle systems with additive noise of the form \eqref{eq:SPVM_AN}. However, we are unaware of any applications of the modulated-energy method to systems with multiplicative noise.

As in \cite{Duerinckx2016, Serfaty2020, Rosenzweig2020_PVMF, Rosenzweig2020_CouMF}, the idea is to take as modulated energy the quantity $\Fr_N^{avg}(\ux_N(t),\xi(t))$ from the statement of \cref{thm:main}. One may consider this quantity a renormalization of the $\dot{H}^{-1}(\R^2)$ semi-norm, so as to remove the infinite self-interaction between point masses. The modulated energy $\Fr_N^{avg}(\ux_N(t),\xi(t))$ in our setting is now a continuous, \emph{stochastic} process which satisfies the SDE
\begin{equation}
\label{eq:ME_deriv}
\begin{split}
d\Fr_N^{avg}(\ux_N,\xi) &= \int_{(\R^2)^2 \setminus\D_2} \nabla\g(x-y)\cdot\paren*{u(x)-u(y)}d(\xi_N-\xi)(x)d(\xi_N-\xi)(y)dt \\
&\ph + \frac{1}{2}\sum_{k=1}^\infty \int_{(\R^2)^2\setminus\D_2} \nabla\g(x-y) \cdot \paren*{(\sigma_k\cdot\nabla)\sigma_k(x)-(\sigma_k\cdot\nabla)\sigma_k(y)}d(\xi_N-\xi)(x)d(\xi_N-\xi)(y)dt \\
&\ph + \sum_{k=1}^\infty\int_{(\R^2)^2\setminus \D_2} \nabla\g(x-y)\cdot \paren*{\sigma_k(x)-\sigma_k(y)}d(\xi_N-\xi)(x)d(\xi_N-\xi)(y)dW^k \\
&\ph + \frac{1}{2}\sum_{k=1}^\infty \int_{(\R^2)^2\setminus\D_2} \nabla^2\g(x-y) : \paren*{\sigma_k(x)-\sigma_k(y)}^{\otimes 2} d(\xi_N-\xi)(x)d(\xi_N-\xi)(y)dt.
\end{split}
\end{equation}
Here, the third term in the right-hand side is to be interpreted in the It\^{o} sense and the $:$ in the fourth term denotes the Frobenius inner product for $2\times 2$ matrices. The challenge is to show that the moments of the modulated energy must decay as $N\rightarrow 0$.

The third term in \eqref{eq:ME_deriv} has zero expectation and therefore can be ignored. In the deterministic case \cite{Rosenzweig2020_PVMF}, we showed how to estimate the first two terms assuming only that $u$ and $\sum_{k=1}^\infty (\sigma_k\cdot\nabla)\sigma_k$ are log-Lipschitz, assumptions easily seen to hold. We have reproduced the key estimate from \cite{Rosenzweig2020_PVMF} in \cref{sec:kprop} in the form of \cref{prop:FOLLkprop}. This estimate carries over to our present setting, as we can apply it point-wise in almost every realization of the noise. Thus, the new difficulty is to deal with the last term in \eqref{eq:ME_deriv}.

Unfortunately, \cref{prop:FOLLkprop} does not seem applicable to this last term, which is the second-order correction obtained when one formally converts the Stratonovich equation \eqref{eq:SEul_S} into an It\'{o} equation \eqref{eq:SEul_I} and which stems from the nonzero quadratic variation of Brownian motion. Our new insight, which ultimately enables us to accommodate the multiplicative noise, is that the last term in \eqref{eq:ME_deriv} can be treated as a perturbation of a special singular integral operator (SIO) known as a \emph{Calder\'{o}n $d$-commutator}, following terminology introduced by Christ and Journ\'{e} \cite{CJ1987}. We give an abbreviated review of these SIOs in \cref{ssec:app_CdC} and more generally, \cref{sec:app}. To illustrate this observation, we suppose for a moment that $\mu$ is a test function with zero mean. Then integrating by parts twice, we have that
\begin{equation}
\label{eq:intro_Tsig}
\begin{split}
&\int_{(\R^2)^2} \nabla^2 \g(x-y) : (\sigma(x)-\sigma(y))^{\otimes 2} d\mu(x)d\mu(y) \\
&=\int_{\R^2} (\nabla\g\ast\mu)(x)\cdot \nabla T_\sigma \nabla (\nabla \g\ast\mu)(x)dx,
\end{split}
\end{equation}
where
\begin{equation}
(T_{\sigma} \mu)(x) \coloneqq \int_{\R^2} \nabla^2\g(x-y) : (\sigma(x)-\sigma(y))^{\otimes 2} d\mu(y).
\end{equation}
By appealing to sophisticated SIO estimates of Christ and Journ\'{e} \cite{CJ1987} (see \cref{sec:app}), we can show that $\nabla T_\sigma \nabla$ is a bounded operator from $L^2(\R^2)$ to $L^2(\R^2; (\R^2)^{\otimes 2})$. It then follows from Cauchy-Schwarz that
\begin{equation}
\left|\int_{\R^2} (\nabla\g\ast\mu)(x)\cdot \nabla T_\sigma \nabla (\nabla \g\ast\mu)(x)dx\right| \lesssim \|\nabla T_{\sigma}\nabla\|_{2,2} \|\nabla\g\ast\mu\|_{L^2(\R^2)}^2,
\end{equation}
where $\|\nabla T_{\sigma}\nabla\|_{2,2}$ denotes the operator norm of $\nabla T_{\sigma}\nabla$. While $\nabla \g\ast (\xi_N-\xi)$ barely fails to be in $L^2$, we can smear out each point mass $\d_{x_i}$ to a small scale $\eta_i>0$ (see \eqref{eq:g_conv_smear} for the exact procedure) to obtain an approximate empirical measure  $\xi_N^{(\ueta_N)}$, where $\ueta_N \coloneqq (\et_1,\ldots,\et_N)$, satisfying
\begin{equation}
\|\nabla \g\ast (\xi_N^{(\ueta_N)}-\xi)\|_{L^2(\R^2)} \lesssim \Fr_N^{avg}(\ux_N,\xi) + o(1)
\end{equation}
as $N\rightarrow \infty$ (see \cref{cor:grad_H}). The error introduced by this approximation can also be shown to be $o(1)$ in the limit as $N\rightarrow\infty$.

Note that the preceding analysis so far has been path-wise. Ultimately by taking expectations, which eliminates the It\^o integral in the right-hand side of \eqref{eq:ME_deriv}, we obtain an integral inequality for the absolute mean of the modulated energy of the form
\begin{equation}
\begin{split}
\mathfrak{A}_N(t) &\leq |\Fr_N^{avg}(\ux_N^0,\xi^0)| +  C(\|\xi^0\|_{L^\infty}, \|\nabla\usig\|_{\ell_k^2 L_x^\infty})t\frac{(\ln N)^2}{N}  \\
&\ph + C(\|\xi^0\|_{L^\infty}, \|\nabla\usig\|_{\ell_k^2 L_x^\infty}) \int_0^t |\ln \mathfrak{A}_N(s)| \mathfrak{A}_N(s)ds
\end{split}
\end{equation}
where $C(\cdot,\cdot)$ is a constant depending on its two arguments and
\begin{equation}
\mathfrak{A}_N(t) \coloneqq \sup_{0\leq s\leq t} \E\paren*{|\Fr_N^{avg}(\ux_N(s),\xi(s))|}.
\end{equation}
An application of the Osgood lemma (see \cref{lem:Os} in \cref{ssec:pre_Os}), which is a generalization of the classical Gronwall-Bellman inequality, finally leads us to the estimate \eqref{eq:main} stated in \cref{thm:main}.

\subsection{Organization of article}
\label{ssec:intro_org}
Having presented the main results of this article and discussed their proofs, we briefly comment on the organization of the body of the article.

\cref{sec:pre} consists of notation and preliminary facts from harmonic analysis, concerning function spaces, singular integral estimates, and the Osgood lemma. Additionally, we review in \cref{ssec:pre_Eul} the well-posedness of the stochastic Euler equation \eqref{eq:SEul_S} as developed in \cite{BFM2016}. \cref{sec:CE} is devoted to properties of the Coulomb potential $\g$ and the modulated energy functional $\Fr_N^{avg}(\cdot,\cdot)$. Many of the results in this section are already contained in the articles \cite{Duerinckx2016, Serfaty2020, Rosenzweig2020_PVMF}. Therefore, we generally include only statements of the results and skip repeating the proofs.

\cref{sec:kprop} is the meat of this article, containing the proofs of the crucial \cref{prop:FOLkprop}, \cref{prop:FOLLkprop}, and \cref{prop:SOkprop}. We begin this section with an overview. The remaining three subsections correspond to the proofs of the three aforementioned propositions. Finally, \cref{sec:MR} is where we prove our main result, \cref{thm:main}, using the results of \cref{sec:kprop}.

The proofs of \cref{prop:FOLkprop}, \cref{prop:FOLLkprop}, and \cref{prop:SOkprop} rely on sophisticated estimates for certain singular integrals, which we have deferred to \cref{sec:app}. Since harmonic analysts are not the target audience of this article, we include in this appendix a review of multilinear singular integral operators/forms, in particular the important aforementioned work of Christ and Journ\'{e} \cite{CJ1987}.

\subsection{Acknowledgments}
\label{ssec:intro_ack}
The author thanks Gigliola Staffilani for her helpful discussion and encouragement during the early stages of this work. The author also thanks Fanghua Lin for his correspondence, which has improved the presentation of this article. Finally, the author gratefully acknowledges financial support from the Simons Collaboration on Wave Turbulence.

\section{Preliminaries}
\label{sec:pre}

\subsection{Basic Notation}
\label{ssec:pre_not}
Given nonnegative quantities $A$ and $B$, we write $A\lesssim B$ if there exists a constant $C>0$, independent of $A$ and $B$, such that $A\leq CB$. If $A \lesssim B$ and $B\lesssim A$, we write $A\sim B$. To emphasize the dependence of the constant $C$ on some parameter $p$, we sometimes write $A\lesssim_p B$ or $A\sim_p B$.

We denote the natural numbers excluding zero by $\N$ and including zero by $\N_0$. Similarly, we denote the nonnegative real numbers by $\R_{\geq 0}$ and the positive real numbers by $\R_+$ or $\R_{>0}$.

Given $N\in\N$ and points $x_{1,N},\ldots,x_{N,N}$ in some set $X$, we will write $\ux_N$ to denote the $N$-tuple $(x_{1,N},\ldots,x_{N,N})$. We define the generalized diagonal $\Delta_N$ of the Cartesian product $X^N$ to be the set
\begin{equation}
\Delta_N \coloneqq \{(x_1,\ldots,x_N) \in X^N : x_i=x_j \text{ for some $i\neq j$}\}.
\end{equation}
Given $x\in\R^n$ and $r>0$, we denote the ball and sphere centered at $x$ of radius $r$ by $B(x,r)$ and $\p B(x,r)$, respectively. We denote the uniform probability measure on the sphere $\p B(x,r)$ by $\sigma_{\p B(x,r)}$. Given a function $f$, we denote the support of $f$ by $\supp f$. We use the notation $\jp{x}\coloneqq (1+|x|^2)^{1/2}$ to denote the Japanese bracket.

If $A=(A^{ij})_{i,j=1}^N$ and $B=(B^{ij})_{i,j=1}^N$ are two $N\times N$ matrices, with entries in $\C$, we denote their Frobenius inner product by
\begin{equation}
A : B \coloneqq \sum_{i,j=1}^N A^{ij}\ol{B^{ij}}.
\end{equation}

We denote the space of complex-valued Borel measures on $\R^n$ by $\M(\R^n)$. We denote the subspace of probability measures (i.e. elements $\mu\in\M(\R^n)$ with $\mu\geq 0$ and $\mu(\R^n)=1$) by $\P(\R^n)$. When $\mu$ is in fact absolutely continuous with respect to Lebesgue measure on $\R^n$, we shall abuse notation by writing $\mu$ for both the measure and its density function. 

We denote the Banach space of complex-valued continuous, bounded functions on $\R^n$ by $C(\R^n)$ equipped with the uniform norm $\|\cdot\|_{\infty}$. More generally, we denote the Banach space of $k$-times continuously differentiable functions with bounded derivatives up to order $k$ by $C^k(\R^n)$ equipped with the natural norm, and we define $C^\infty \coloneqq \bigcap_{k=1}^\infty C^k$. We denote the subspace of smooth functions with compact support by $C_c^\infty(\R^n)$, and use the subscript $0$ to indicate functions vanishing at infinity. We denote the Schwartz space of functions by $\Sc(\R^n)$ and the space of tempered distributions by $\Sc'(\R^n)$.

For $p\in [1,\infty]$ and $\Omega\subset\R^n$, we define $L^p(\Omega)$ to be the usual Banach space equipped with the norm
\begin{equation}
\|f\|_{L^p(\Omega)} \coloneqq \paren*{\int_\Omega |f(x)|^p dx}^{1/p}
\end{equation}
with the obvious modification if $p=\infty$. When $f: \Omega\rightarrow X$ takes values in some Banach space $(X,\|\cdot\|_{X})$, we shall write $\|f\|_{L^p(\Omega;X)}$.

Our conventions for the Fourier transform and inverse Fourier transform are respectively
\begin{align}
\mathcal{F}(f)(\xi) &\coloneqq \wh{f}(\xi) \coloneqq \int_{\R^n}f(x)e^{-i\xi\cdot x}dx \qquad \forall \xi\in\R^n,\\
\mathcal{F}^{-1}(f)(x) &\coloneqq f^{\vee}(x) \coloneqq \frac{1}{(2\pi)^n}\int_{\R^n}f(\xi)e^{i x\cdot\xi}d\xi \qquad \forall x\in\R^n.
\end{align}

For integer $k\in\N_0$ and $1\leq p\leq \infty$, we define the usual Sobolev spaces
\begin{equation}
\begin{split}
W^{k,p}(\R^n) \coloneqq \{\mu \in L^p(\R^n) : \nabla^k \mu \in L^p(\R^n;(\C^n)^{\otimes k}),\quad \|\mu\|_{W^{k,p}(\R^n)} &\coloneqq \sum_{k=0}^n \|\nabla^{k}\mu\|_{L^p(\R^n)}.
\end{split}
\end{equation}
For $s\in\R$, we define the inhomogeneous Sobolev space $H^s(\R^n)$ to be the space of $\mu\in\Sc'(\R^n)$ such that $\wh{\mu}$ is locally integrable and 
\begin{equation}
\label{eq:H^s_def}
\|\mu\|_{H^s(\R^n)} \coloneqq \paren*{\int_{\R^n} \jp{\xi}^{2s} |\wh{\mu}(\xi)|^2d\xi }^{1/2}<\infty,
\end{equation}
and we use the notation $\|\mu\|_{\dot{H}^s(\R^n)}$ to denote the semi-norm where $\jp{\xi}$ is replaced by $|\xi|$.

\subsection{Harmonic Analysis}
\label{ssec:pre_HA}
In this subsection, we review some facts about function spaces, Littlewood-Paley theory, and Riesz potential estimates. This material is standard in the field, and the reader can consult harmonic analysis references such as \cite{Stein1970,Stein1993,grafakos2014c, grafakos2014m}.

\begin{mydef}[Riesz potential]
\label{def:RP}
Let $n\in\N$. For $s>-n$, we define the Fourier multiplier $(-\Delta)^{s/2}$ by
\begin{equation}
((-\Delta)^{s/2}f)(x) \coloneqq (|\cdot|^{s}\wh{f}(\cdot))^\vee(x) \qquad x\in \R^n,
\end{equation}
for a Schwartz function $f\in \Sc(\R^n)$. Since, for $s\in (-n,0)$, the inverse Fourier transform of $|\xi|^s$ is the tempered distribution
\begin{equation}
\frac{2^s\Gamma(\frac{n+s}{2})}{\pi^{\frac{n}{2}}\Gamma(-\frac{s}{2})} |x|^{-s-n},
\end{equation}
it follows that
\begin{equation}
((-\Delta)^{s/2}f)(x) = \frac{2^s \Gamma(\frac{n+s}{2})}{\pi^{\frac{n}{2}}\Gamma(-\frac{s}{2})}\int_{\R^n}\frac{f(y)}{|x-y|^{s+n}}dy, \qquad x\in\R^n.
\end{equation}
For $s\in (0,n)$, we define the \emph{Riesz potential operator} of order $s$ by $\mathcal{I}_s \coloneqq (-\Delta)^{-s/2}$ on $\Sc(\R^n)$.
\end{mydef}

$\mathcal{I}_s$ extends to a well-defined operator on any $L^p$ space, the extension also denoted by $\mathcal{I}_s$ with an abuse notation, as a consequence of the \emph{Hardy-Littlewood-Sobolev (HLS) lemma}.

\begin{prop}[Hardy-Littlewood-Sobolev]
\label{prop:HLS}
Let $n\in\N$, $s \in (0,n)$, and $1<p<q<\infty$ satisfy the relation
\begin{equation}
\frac{1}{p}-\frac{1}{q} = \frac{s}{n}.
\end{equation}
Then for all $f\in\Sc(\R^n)$,
\begin{align}
\|\mathcal{I}_s(f)\|_{L^q(\R^n)} &\lesssim_{n,s,p} \|f\|_{L^p(\R^n)},\\
\|\mathcal{I}_s(f)\|_{L^{\frac{n}{n-s},\infty}(\R^n)} &\lesssim_{n,s} \|f\|_{L^1(\R^n)},
\end{align}
where $L^{r,\infty}$ denotes the weak-$L^r$ space. Consequently, $\mathcal{I}_s$ has a unique extension to $L^p$, for all $1\leq p<\infty$.
\end{prop}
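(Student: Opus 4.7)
\bigskip

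\noindent\emph{Proof proposal.} The plan is to reduce the inequality to a pointwise domination of the Riesz potential $\mathcal{I}_s f$ by the Hardy--Littlewood maximal function $Mf$, namely Hedberg's inequality
\begin{equation*}
|\mathcal{I}_s f(x)| \lesssim_{n,s,p} \|f\|_{L^p(\R^n)}^{1-p/q}\, (Mf(x))^{p/q},
\end{equation*}
and then to invoke the classical maximal function bounds of Hardy--Littlewood. Once this pointwise inequality is proved, the strong-type assertion is immediate: raising both sides to the $q$-th power and integrating yields
\begin{equation*}
\|\mathcal{I}_s f\|_{L^q(\R^n)}^q \lesssim \|f\|_{L^p}^{q-p}\int_{\R^n}(Mf(x))^{p}\,dx = \|f\|_{L^p}^{q-p}\|Mf\|_{L^p}^p \lesssim \|f\|_{L^p}^q,
\end{equation*}
where the last inequality is the boundedness of $M$ on $L^p(\R^n)$ for $p>1$. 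For the weak-type endpoint $p=1$, the analogous pointwise bound $|\mathcal{I}_s f(x)| \lesssim \|f\|_{L^1}^{s/n} (Mf(x))^{1-s/n}$ will reduce the claim to the Hardy--Littlewood weak-type $(1,1)$ inequality for $M$.

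To establish the pointwise inequality, I would use the kernel representation $\mathcal{I}_s f(x) = c_{n,s}\int_{\R^n} |x-y|^{s-n} f(y)\, dy$ from the definition and split the region of integration at a radius $R>0$ to be optimized:
\begin{equation*}
|\mathcal{I}_s f(x)| \lesssim \int_{|x-y|<R}\frac{|f(y)|}{|x-y|^{n-s}}\,dy + \int_{|x-y|\geq R}\frac{|f(y)|}{|x-y|^{n-s}}\,dy.
\end{equation*}
For the near piece, I would perform a dyadic decomposition $\{R/2^{k+1}\leq |x-y|<R/2^k\}$ and bound each annular integral by a multiple of $(R/2^k)^s Mf(x)$, summing the geometric series to get $\lesssim R^s Mf(x)$. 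For the far piece, H\"older's inequality with exponent $p$ (noting that $(n-s)p'>n$ since $p<n/s$, which is guaranteed by the hypothesis $1/p-1/q=s/n$ and $q<\infty$) gives a bound of the form $\lesssim R^{s-n/p}\|f\|_{L^p}$. Balancing the two terms by choosing $R=(\|f\|_{L^p}/Mf(x))^{p/n}$ produces Hedberg's inequality with the correct exponent $p/q = sp/n$.

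For the weak-type endpoint, the same splitting with $p=1$ (so the far piece is bounded pointwise by $R^{s-n}\|f\|_{L^1}$) and optimization in $R$ yields $|\mathcal{I}_s f(x)| \lesssim \|f\|_{L^1}^{s/n}(Mf(x))^{(n-s)/n}$. Consequently,
\begin{equation*}
\{x : |\mathcal{I}_s f(x)|>\lambda\} \subset \{x : Mf(x) > c(\lambda/\|f\|_{L^1}^{s/n})^{n/(n-s)}\},
\end{equation*}
and applying the weak-type $(1,1)$ bound for $M$ gives $\big|\{|\mathcal{I}_s f|>\lambda\}\big|\lesssim \|f\|_{L^1}^{n/(n-s)}\lambda^{-n/(n-s)}$, which is precisely the weak-$L^{n/(n-s)}$ estimate. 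Finally, the extension to $L^p$ follows by density of Schwartz functions together with the fact that the convolution kernel $|x|^{s-n}$ is nonnegative and locally integrable.

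I expect no serious obstacle here; the only bookkeeping subtlety is verifying that the arithmetic of the exponents lines up ($sp/n = 1-p/q$ under the scaling relation $1/p-1/q=s/n$) and confirming the convergence condition $(n-s)p'>n$ in the far-field H\"older step, both of which are direct consequences of the hypothesis.
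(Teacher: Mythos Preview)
Your argument via Hedberg's pointwise inequality is correct and complete; the exponent arithmetic, the far-field integrability condition $(n-s)p'>n$, and the optimization in $R$ all check out. Note, however, that the paper does not actually prove this proposition: it is stated in the preliminaries section as a standard fact with references to the textbooks of Stein and Grafakos, so there is no ``paper's own proof'' to compare against. Your Hedberg-style proof is one of the canonical arguments found in those references (another common route is via Marcinkiewicz interpolation between the trivial $L^\infty$ bound on a truncated kernel and the weak endpoint), so you have supplied exactly what the paper leaves to the literature.
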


The next lemma allows us to control the $L^\infty$ norm of $\mathcal{I}_s(f)$ in terms of the $L^1$ norm and $L^p$ norm, for some $p=p(s,n)$.
\begin{lemma}[$L^{\infty}$ bound for Riesz potential]
\label{lem:Linf_RP}
For any $n\in\N$, $s\in (0,n)$, and $p\in (\frac{n}{s},\infty]$,
\begin{equation}
\|\mathcal{I}_s(f)\|_{L^\infty(\R^n)} \lesssim_{s,n,p} \|f\|_{L^{1}(\R^n)}^{1-\frac{n-s}{n(1-\frac{1}{p})}} \|f\|_{L^{p}(\R^n)}^{\frac{n-s}{n(1-\frac{1}{p})}}.
\end{equation}
\end{lemma}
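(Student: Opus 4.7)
The plan is the standard split of the Riesz potential integral into near and far parts at a radius $R>0$, followed by optimization in $R$. Write
\begin{equation*}
\mathcal{I}_s(f)(x) = c_{n,s}\int_{|x-y|\leq R}\frac{f(y)}{|x-y|^{n-s}}dy + c_{n,s}\int_{|x-y|>R}\frac{f(y)}{|x-y|^{n-s}}dy,
\end{equation*}
where $c_{n,s}>0$ is the constant from \cref{def:RP}.

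First, I would bound the far piece trivially using $|x-y|^{-(n-s)}\leq R^{-(n-s)}$ on the region of integration, yielding
\begin{equation*}
\left|\int_{|x-y|>R}\frac{f(y)}{|x-y|^{n-s}}dy\right| \leq R^{-(n-s)}\|f\|_{L^1(\R^n)}.
\end{equation*}
Next, for the near piece I would apply H\"older's inequality with exponents $p$ and $p'=p/(p-1)$ (interpreting this as an $L^\infty$--$L^1$ pairing when $p=\infty$). The hypothesis $p>n/s$ is exactly what makes $|\cdot|^{-(n-s)}$ belong to $L^{p'}$ on $B(0,R)$: indeed $(n-s)p'<n \iff p>n/s$. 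A direct radial computation then gives
\begin{equation*}
\left(\int_{B(0,R)} |z|^{-(n-s)p'}dz\right)^{1/p'} \lesssim_{n,s,p} R^{s-n/p},
\end{equation*}
so that the near piece is bounded by $C_{n,s,p}\|f\|_{L^p(\R^n)} R^{s-n/p}$.

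Combining the two estimates,
\begin{equation*}
\|\mathcal{I}_s(f)\|_{L^\infty(\R^n)} \lesssim_{n,s,p} \|f\|_{L^p(\R^n)} R^{s-n/p} + \|f\|_{L^1(\R^n)} R^{-(n-s)}.
\end{equation*}
I would then optimize in $R$ by equating the two contributions. Solving
$\|f\|_{L^p} R^{s-n/p} = \|f\|_{L^1} R^{-(n-s)}$
gives $R^{n(1-1/p)} = \|f\|_{L^1}/\|f\|_{L^p}$, and substituting back produces precisely
\begin{equation*}
\|\mathcal{I}_s(f)\|_{L^\infty(\R^n)} \lesssim_{n,s,p} \|f\|_{L^1(\R^n)}^{1-\frac{n-s}{n(1-1/p)}}\|f\|_{L^p(\R^n)}^{\frac{n-s}{n(1-1/p)}},
\end{equation*}
as desired; the case $p=\infty$ is read off by taking $1-1/p=1$. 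No step is truly difficult here, but the one subtle point worth being explicit about is the sharpness of the range $p>n/s$, which enters exactly once through the integrability of $|z|^{-(n-s)p'}$ near the origin and dictates the appearance of the exponent $n(1-1/p)$ in the optimal $R$.
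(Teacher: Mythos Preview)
Your proof is correct and is exactly the standard near/far splitting argument one finds in the references the paper cites (Stein, Grafakos); the paper itself does not include a proof of this lemma, treating it as a standard preliminary fact. Nothing to add.
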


To introduce the Besov scale of function spaces, we must first recall the rudiments of Littlewood-Paley theory. Let $\phi\in C_{c}^{\infty}(\R^n)$ be a radial, nonincreasing function, such that $0\leq \phi\leq 1$ and
\begin{equation}
\phi(x)=
\begin{cases}
1, & {|x|\leq 1}\\
0, & {|x|>2}
\end{cases}.
\end{equation}
Define the dyadic partitions of unity
\begin{align}
1&=\phi(x)+\sum_{j=1}^{\infty}[\phi(2^{-j}x)-\phi(2^{-j+1}x)] \eqqcolon \psi_{\leq 0}(x)+\sum_{j=1}^{\infty}\psi_{j}(x) \qquad \forall x\in\R^n,\\
1&=\sum_{j\in\mathbb{Z}}[\phi(2^{-j}x)-\phi(2^{-j+1}x)] \eqqcolon \sum_{j\in\mathbb{Z}}{\psi}_{j}(x) \qquad \forall x\in\R^n\setminus\{0\}.
\end{align}
For any $j\in\Z$, we define the Littlewood-Paley projector $P_{j}, P_{\leq 0}$ by
\begin{align}
(P_jf)(x) &\coloneqq (\psi_{j}(D)f)(x) = \int_{\R^n}K_{j}(x-y)f(y)dy \qquad K_j \coloneqq \psi_j^{\vee},\\
(P_{\leq 0}f)(x) &\coloneqq (\psi_{\leq 0}(D)f)(x) = \int_{\R^n} K_{\leq 0}(x-y)f(y)dy \qquad K_{\leq 0} \coloneqq \psi_{\leq 0}^{\vee}.
\end{align}

\begin{mydef}[Besov space]
\label{def:Bes}
Let $s\in\R$ and $1\leq p,q\leq\infty$. We define the inhomogeneous Besov space $B_{p,q}^s(\R^n)$ to be the space of $\mu\in\Sc'(\R^n)$ such that
\begin{equation}
\|\mu\|_{B_{p,q}^s(\R^n)} \coloneqq \paren*{\|P_{\leq 0}\mu\|_{L^p(\R^n)} + \sum_{j=1}^\infty 2^{jqs}\|P_j\mu\|_{L^p(\R^n)}^q}^{1/q} < \infty.
\end{equation}
For $p,q,s$ as above, we also define the homogeneous Besov semi-norm
\begin{equation}
\|\mu\|_{\dot{B}_{p,q}^s(\R^n)} \coloneqq \paren*{\sum_{j\in\Z} 2^{jqs} \|P_j\mu\|_{L^p(\R^n)}^{q}}^{1/q}.
\end{equation}
\end{mydef}

\begin{remark}
Any two choices of Littlewood-Paley partitions of unity used to define $\|\cdot\|_{B_{p,q}^s(\R^n)}$ (resp. $\|\cdot\|_{\dot{B}_{p,q}^s(\R^n)}$) lead to equivalent norms (resp. semi-norms). 
\end{remark}

\begin{remark}
From Plancherel's theorem, we see that the space $B_{2,2}^s(\R^n)$ coincides with the Sobolev space $H^s(\R^n)$. For $s\in \R_+\setminus\N$, the space $B_{\infty,\infty}^s(\R^n)$ coincides with the H\"older space $C^{[s],s-[s]}(\R^n)$ of bounded functions $\mu:\R^n\rightarrow\C$ such that $\nabla^{k}\mu$ is bounded, for integers $0\leq k\leq [s]$ and
\begin{equation}
\|\nabla^{[s]}\mu\|_{\dot{C}^{s-[s]}(\R^n)} \coloneqq \sup_{{0<|x-y|\leq 1}} \frac{|(\nabla^{[s]}\mu)(x) - (\nabla^{[s]}\mu)(y)|}{|x-y|^{s-[s]}} <\infty. 
\end{equation}
For integer $s$, $B_{\infty,\infty}^s(\R^n)$ is the Zygmund space of order $s$, which is \emph{strictly larger} than $C^s(\R^n)$.
\end{remark}

We next define the space of \emph{log-Lipschitz functions}, which is quite relevant given that the Biot-Savart velocity $u$ is log-Lipschitz, but not Lipschitz, for $L^\infty$ vorticity.
\begin{mydef}[Log-Lipschitz space]
\label{def:LL}
We define $LL(\R^n)$ to be the space of functions $\mu\in C(\R^n)$ such that
\begin{equation}
\|\mu\|_{LL(\R^n)} \coloneqq \sup_{0<|x-y|\leq e^{-1}} \frac{|\mu(x)-\mu(y)|}{|x-y||\ln|x-y||} <\infty.
\end{equation}
\end{mydef}

\begin{lemma}
\label{lem:LL_embed}
It holds that
\begin{equation}
\|\mu\|_{LL(\R^n)} \lesssim_n \|\nabla \mu\|_{B_{\infty,\infty}^0(\R^n)}, \qquad \forall \mu\in B_{\infty,\infty}^1(\R^n).
\end{equation}
Consequently, $B_{\infty,\infty}^1(\R^n)$ continuously embeds in $LL(\R^n)$.
\end{lemma}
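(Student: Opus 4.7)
The plan is to exploit the Littlewood-Paley decomposition $\mu = P_{\leq 0}\mu + \sum_{j\geq 1} P_j\mu$ and estimate the difference $\mu(x)-\mu(y)$ by splitting at a dyadic frequency threshold $N=N(|x-y|)$: low frequencies are bounded via the mean value theorem, high frequencies via the triangle inequality together with a Bernstein-type bound, and the threshold is optimized in terms of $|x-y|$.

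First I would record two ingredients. Since $\nabla$ and $P_j$ commute as Fourier multipliers, for every $j\geq 0$ one has
\begin{equation*}
\|\nabla P_j \mu\|_{L^\infty(\R^n)} = \|P_j\nabla \mu\|_{L^\infty(\R^n)}\leq \|\nabla\mu\|_{B_{\infty,\infty}^0(\R^n)},
\end{equation*}
directly from \cref{def:Bes}. Next, since $\widehat{P_j\mu}$ is supported in the annulus $|\xi|\sim 2^j$ for $j\geq 1$, the factorization $\widehat{P_j\mu}(\xi) = \sum_{k=1}^n \frac{\xi_k\,\tilde{\psi}_j(\xi)}{i|\xi|^2}\widehat{\partial_k\mu}(\xi)$ against a fattened Littlewood-Paley cutoff $\tilde{\psi}_j$ yields a convolution kernel with $L^1$ norm $\lesssim 2^{-j}$, so that
\begin{equation*}
\|P_j\mu\|_{L^\infty(\R^n)}\lesssim 2^{-j}\|P_j\nabla\mu\|_{L^\infty(\R^n)}\lesssim 2^{-j}\|\nabla\mu\|_{B_{\infty,\infty}^0(\R^n)}.
\end{equation*}

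Now I would fix $x\neq y$ with $|x-y|\leq e^{-1}$, choose the integer $N\geq 1$ with $2^{-N}\leq |x-y|<2^{-N+1}$, and decompose
\begin{equation*}
\mu(x)-\mu(y) = \paren*{P_{\leq 0}\mu(x)-P_{\leq 0}\mu(y)} + \sum_{j=1}^{N}\paren*{P_j\mu(x)-P_j\mu(y)} + \sum_{j>N}\paren*{P_j\mu(x)-P_j\mu(y)}.
\end{equation*}
For the first two groups I apply the mean value theorem and the first ingredient above to get
\begin{equation*}
\Big|\paren*{P_{\leq 0}\mu(x)-P_{\leq 0}\mu(y)} + \sum_{j=1}^{N}\paren*{P_j\mu(x)-P_j\mu(y)}\Big|\lesssim N\,\|\nabla\mu\|_{B_{\infty,\infty}^0}|x-y|.
\end{equation*}
For the tail I use the triangle inequality together with the second ingredient:
\begin{equation*}
\sum_{j>N}|P_j\mu(x)-P_j\mu(y)|\leq 2\sum_{j>N}\|P_j\mu\|_{L^\infty}\lesssim 2^{-N}\|\nabla\mu\|_{B_{\infty,\infty}^0}\lesssim |x-y|\|\nabla\mu\|_{B_{\infty,\infty}^0}.
\end{equation*}
Since $N\sim |\ln|x-y||$ by construction, combining the two contributions, dividing by $|x-y|\,|\ln|x-y||$, and taking the supremum over $0<|x-y|\leq e^{-1}$ gives $\|\mu\|_{LL(\R^n)}\lesssim_n \|\nabla\mu\|_{B_{\infty,\infty}^0(\R^n)}$, as desired. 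Continuous embedding of $B_{\infty,\infty}^1$ into $LL$ is then immediate from the bound $\|\nabla\mu\|_{B_{\infty,\infty}^0}\leq \|\mu\|_{B_{\infty,\infty}^1}$, which follows from the same factorization argument.

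The main obstacle is a technical one rather than a conceptual one: one must justify the Bernstein-type estimate $\|P_j\mu\|_{L^\infty}\lesssim 2^{-j}\|\nabla\mu\|_{B_{\infty,\infty}^0}$ rigorously for $j\geq 1$, and implicitly verify that the Littlewood-Paley series for $\mu$ converges absolutely and uniformly so that the telescoping is legitimate. Both facts follow from the uniform summability of $2^{-j}\|\nabla\mu\|_{B_{\infty,\infty}^0}$ in $j$ together with standard Young's inequality applied to the convolution kernels of the frequency localized multipliers; no quantitative subtlety beyond this arises.
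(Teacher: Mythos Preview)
Your argument is correct and is precisely the standard Littlewood-Paley proof of this embedding: split at the frequency $2^N\sim |x-y|^{-1}$, use the mean-value theorem on the low block and the inverse Bernstein inequality on the high block. The paper does not actually give a proof of this lemma---it is stated as a preliminary fact from harmonic analysis---so there is nothing to compare against, but your write-up supplies exactly the argument one expects and would be an appropriate proof to include.

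One cosmetic remark: in the final line you justify $\|\nabla\mu\|_{B_{\infty,\infty}^0}\lesssim\|\mu\|_{B_{\infty,\infty}^1}$ by ``the same factorization argument.'' Strictly speaking this is the \emph{forward} Bernstein inequality $\|\nabla P_j\mu\|_{L^\infty}\lesssim 2^j\|P_j\mu\|_{L^\infty}$ (multiply by $i\xi$ rather than divide), not the inverse one you used for the tail; the distinction is harmless but worth stating clearly.
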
 

The next lemma for the 2D Coulomb potential $\g(x)=-\frac{1}{2\pi}\ln|x|$ is from \cite{Rosenzweig2020_PVMF} and gives useful estimates for the potential energy and Biot-Savart velocity of a measure $\mu$. In particular, the lemma shows that the modulated energy is well-defined under the assumptions in \eqref{eq:main_ass}.
\begin{lemma}[{\cite[Lemma 2.10]{Rosenzweig2020_PVMF}}]
\label{lem:PE_bnds}
Suppose that $\mu\in L^1(\R^2)\cap L^p(\R^2)$, for some $1<p\leq\infty$, such that $\int_{\R^2}\ln\jp{x}|\mu(x)|dx<\infty$. Then the convolution $\g\ast\mu$ is a well-defined continuous function, and we have the point-wise estimate
\begin{equation}
|(\g\ast\mu)(x)| \lesssim_p \jp{x}^{\frac{p-1}{p}}\ln(2\jp{x}) \|\mu\|_{L^p(\R^2)} + \int_{\R^2}\ln(2\jp{y})|\mu(y)|dy.
\end{equation}
If $1<p\leq 2$, then
\begin{equation}
\begin{split}
\|\g\ast\mu\|_{\dot{B}_{\infty,\infty}^{\frac{2p-2}{p}}(\R^2)} &\lesssim_{p} \|\mu\|_{L^p(\R^2)};
\end{split}
\end{equation}
and if $2<p\leq\infty$, then
\begin{equation}
\begin{split}
\|\nabla(\g\ast\mu)\|_{L^\infty(\R^2)} &\lesssim_{p} \|\mu\|_{L^1(\R^2)}^{1-\frac{p}{2(p-1)}} \|\mu\|_{L^p(\R^2)}^{\frac{p}{2(p-1)}},\\
\|\nabla(\g\ast\mu)\|_{\dot{B}_{\infty,\infty}^{\frac{p-2}{p}}(\R^2)} &\lesssim_{p} \|\mu\|_{L^p(\R^2)}.
\end{split}
\end{equation}
\end{lemma}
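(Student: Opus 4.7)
The four estimates separate naturally: the point-wise bound is a careful splitting argument, the two homogeneous Besov bounds reduce to Littlewood--Paley estimates on $\g$ itself, and the $L^\infty$ gradient bound is a direct application of \cref{lem:Linf_RP}. Continuity of $\g\ast\mu$ will follow from the same splitting used for the point-wise estimate together with dominated convergence.

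For the point-wise estimate I would use $\g(x)=-\tfrac{1}{2\pi}\ln|x|$ and partition the convolution integral into three regions,
\[
R_1=\{|x-y|\leq 1\},\qquad R_2=\{|x-y|>1,\ |y|\leq\jp{x}^{1/2}\},\qquad R_3=\{|x-y|>1,\ |y|>\jp{x}^{1/2}\}.
\]
On $R_1$, H\"older's inequality with $\ln|\cdot|\in L^{p'}(B(0,1))$ (using $p>1$) bounds the contribution by $C_p\|\mu\|_{L^p}$, which is absorbed into the first term of the target estimate since $\jp{x}^{(p-1)/p}\ln(2\jp{x})\geq\ln 2$. On $R_2$ one has $|x-y|\leq|x|+|y|\leq 2\jp{x}$, so $|\ln|x-y||\lesssim\ln(2\jp{x})$, and H\"older on $B(0,\jp{x}^{1/2})$ produces the factor $|B(0,\jp{x}^{1/2})|^{1/p'}\sim\jp{x}^{1/p'}=\jp{x}^{(p-1)/p}$. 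On $R_3$ the condition $\jp{y}>\jp{x}^{1/2}$ forces $\ln\jp{x}\leq 2\ln\jp{y}$, and combined with the elementary inequality $\ln|x-y|\leq\ln 2+\ln\jp{x}+\ln\jp{y}$ (valid for $|x-y|>1$) this yields $\ln|x-y|\lesssim\ln(2\jp{y})$, so the $R_3$ contribution is controlled by the hypothesis $\int\ln\jp{y}|\mu(y)|\,dy<\infty$.

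For the homogeneous Besov estimates I would exploit the scaling identity $P_j\g(x)=h(2^jx)$, where $h\coloneqq(\psi/|\cdot|^2)^\vee$ is Schwartz because $\psi\in C_c^\infty$ is supported away from the origin, so that $\psi(\xi)/|\xi|^2\in C_c^\infty$. Rescaling gives $\|P_j\g\|_{L^{p'}(\R^2)}=2^{-2j/p'}\|h\|_{L^{p'}}$, and since $P_j(\g\ast\mu)=P_j\g\ast\mu$, Young's inequality produces
\[
\|P_j(\g\ast\mu)\|_{L^\infty}\lesssim 2^{-2j/p'}\|\mu\|_{L^p},\qquad j\in\Z.
\]
Multiplying by $2^{j(2p-2)/p}=2^{2j/p'}$ and taking the supremum over $j\in\Z$ yields the claimed bound for $1<p\leq 2$. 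For $p>2$ the analogous identity $\nabla P_j\g(x)=2^j(\nabla h)(2^jx)$ gives $\|\nabla P_j\g\|_{L^{p'}}\lesssim 2^{j-2j/p'}=2^{-j(p-2)/p}$, and hence the homogeneous Besov bound for $\nabla(\g\ast\mu)$ with index $(p-2)/p$.

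Finally, the $L^\infty$ gradient bound is immediate once one observes that $|\nabla\g(z)|=(2\pi|z|)^{-1}$ pointwise dominates $|\nabla(\g\ast\mu)|$ by a constant multiple of $\mathcal{I}_1|\mu|$; \cref{lem:Linf_RP} with $n=2$, $s=1$ (whose hypothesis $p>n/s=2$ matches the range of the claim) then produces the exponents $1-p/(2(p-1))$ and $p/(2(p-1))$ after simplifying $(n-s)/(n(1-1/p))$. The main obstacle is the bookkeeping in the point-wise estimate: pinning down the exact exponent $\jp{x}^{(p-1)/p}$ forces the intermediate radius to be $\jp{x}^{1/2}$, which is what makes $R_2$ contribute $\jp{x}^{1/p'}$ rather than the crude $\jp{x}^{2/p'}$ one would get by splitting on $|y|\leq 2\jp{x}$, while $R_3$ is precisely the regime where the hypothesis $\int\ln\jp{y}|\mu|<\infty$ can be invoked. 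The Littlewood--Paley computations, by contrast, are one-line calculations once the scaling of $P_j\g$ is identified.
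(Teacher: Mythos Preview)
The paper does not supply its own proof of this lemma; it is quoted verbatim from \cite[Lemma 2.10]{Rosenzweig2020_PVMF} and, as stated in \cref{ssec:intro_org}, results of this kind are included only as statements with proofs deferred to the earlier articles. So there is no in-paper argument to compare against.

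That said, your proposal is sound on its own terms. The three-region split for the point-wise bound is the right idea: the choice of intermediate radius $\jp{x}^{1/2}$ is exactly what makes the H\"older step on $R_2$ produce the exponent $\jp{x}^{(p-1)/p}$, and the $R_3$ estimate correctly leverages $\jp{y}>\jp{x}^{1/2}\Rightarrow \ln\jp{x}\le 2\ln\jp{y}$ so that the logarithmic-moment hypothesis can absorb that region. The Littlewood--Paley argument is also correct: because $\psi$ is supported in an annulus away from the origin, $\psi(\xi)/|\xi|^2\in C_c^\infty(\R^2)$ and the scaling identity $P_j\g(x)=h(2^jx)$ yields $\|P_j\g\|_{L^{p'}}=2^{-2j/p'}\|h\|_{L^{p'}}$, from which both homogeneous Besov bounds follow by Young's inequality. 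The $L^\infty$ gradient bound is indeed an immediate consequence of \cref{lem:Linf_RP} with $n=2$, $s=1$, since $|\nabla(\g\ast\mu)|\le C\,\mathcal{I}_1(|\mu|)$ pointwise and $(n-s)/(n(1-1/p))=p/(2(p-1))$.

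One minor point worth tightening: your claim of continuity of $\g\ast\mu$ by ``the same splitting together with dominated convergence'' is correct in spirit, but the near-field piece requires a short additional remark---e.g.\ that translation is continuous in $L^{p'}$ on the local singularity $\ln|\cdot|\mathbf{1}_{B(0,1)}$, so the $R_1$ contribution is continuous in $x$---before dominated convergence handles the remaining regions.
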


\subsection{The Osgood Lemma}
\label{ssec:pre_Os}
Following the presentation of \cite[Section 3.1]{BCD2011}, we recall some facts about moduli of continuity and the Osgood lemma, which is a generalization of the Gronwall-Bellman inequality.

\begin{mydef}[Modulus of continuity]
\label{def:mod_cont}
Let $a\in (0,1]$. A \emph{modulus of continuity} is an increasing, nonzero continuous function $\rho:[0,a]\rightarrow [0,\infty)$ such that $\rho(0)=0$. We say that a modulus of continuity satisfies the \emph{Osgood condition} or is an \emph{Osgood modulus of continuity}, if
\begin{equation}
\label{eq:Os_con}
\int_0^a \frac{dr}{\rho(r)} = \infty.
\end{equation}
\end{mydef}

\begin{lemma}[Osgood lemma {\cite[Lemma 3.4, Corollary 3.5]{BCD2011}}]
\label{lem:Os}
Fix $a\in (0,1]$. Let $f:[t_0,T]\rightarrow [0,a]$ be a measurable function, $\gamma:[t_0,T]\rightarrow [0,\infty)$ a locally integrable function, and $\rho:[0,a]\rightarrow [0,\infty)$ an Osgood modulus of continuity. Suppose that there exists a constant $c>0$ such that
\begin{equation}
f(t)\leq c + \int_{t_0}^t\gamma(t')\rho(f(t'))dt' \qquad \text{a.e} \ t \in [t_0,T].
\end{equation}
Define the function
\begin{equation}
\mathfrak{M}:(0,a] \rightarrow [0,\infty), \qquad \mathfrak{M}(x) \coloneqq \int_{x}^a\frac{dr}{\rho(r)}dr.
\end{equation}
Then $\mathfrak{M}$ is bijective, and if $t$ is such that $\int_{t_0}^t\gamma(t')dt' \leq \mathfrak{M}(c)$, it holds that
\begin{equation}
f(t) \leq \mathfrak{M}^{-1}\paren*{\mathfrak{M}(c)-\int_{t_0}^t\gamma(t')dt'}.
\end{equation}
\end{lemma}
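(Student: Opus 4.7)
The approach is the classical Osgood--Bihari comparison argument that generalizes the Gronwall proof beyond Lipschitz moduli of continuity. First I would verify that $\mathfrak{M}:(0,a]\to [0,\infty)$ is a continuous, strictly decreasing bijection: strict positivity of $\rho$ on $(0,a]$ makes the integrand in the definition of $\mathfrak{M}$ positive, so $\mathfrak{M}$ is continuous and strictly decreasing; $\mathfrak{M}(a)=0$ pins the right endpoint of the range; and the Osgood condition \eqref{eq:Os_con} forces $\lim_{x\to 0^+}\mathfrak{M}(x)=\infty$. Hence $\mathfrak{M}^{-1}:[0,\infty)\to (0,a]$ is well-defined, continuous, and strictly decreasing, with $\mathfrak{M}'(x)=-1/\rho(x)$ on $(0,a)$, so $\mathfrak{M}$ is $C^1$ on any compact subset of $(0,a]$.

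Next I would introduce the absolutely continuous majorant
\[
F(t) \coloneqq c + \int_{t_0}^t \gamma(t')\rho(f(t'))\,dt',
\]
which is well-defined because $\rho(f(t))\leq \rho(a)$ makes $\gamma\rho(f)$ locally integrable, and which satisfies $F(t_0)=c>0$ and $f(t)\leq F(t)$ for a.e. $t$ by hypothesis. Using monotonicity of $\rho$, on any subinterval $[t_0,t^\ast]$ on which $F(t)\in(0,a]$ I obtain the differential inequality $F'(t) = \gamma(t)\rho(f(t)) \leq \gamma(t)\rho(F(t))$ for a.e. $t$. Dividing by $\rho(F(t))>0$ and invoking the absolutely continuous chain rule (legitimate because $\mathfrak{M}$ is $C^1$ on compact subsets of $(0,a]$), I get
\[
\frac{d}{dt}\mathfrak{M}(F(t)) \;=\; -\frac{F'(t)}{\rho(F(t))} \;\geq\; -\gamma(t) \qquad \text{for a.e. } t\in[t_0,t^\ast].
\]
Integrating from $t_0$ to $t$ yields $\mathfrak{M}(F(t)) \geq \mathfrak{M}(c) - \int_{t_0}^t\gamma(t')\,dt'$, and the hypothesis $\int_{t_0}^t\gamma(t')\,dt'\leq \mathfrak{M}(c)$ places the right-hand side in $[0,\infty)=\mathrm{range}(\mathfrak{M})$. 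Applying the decreasing bijection $\mathfrak{M}^{-1}$ then reverses the inequality to give $F(t)\leq \mathfrak{M}^{-1}\!\paren*{\mathfrak{M}(c)-\int_{t_0}^t\gamma(t')\,dt'}$, and $f\leq F$ delivers the claimed bound.

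The main obstacles are two technical points. First, since $f$ is only measurable and not continuous, one cannot differentiate $f$ itself; the role of $F$ is precisely to replace $f$ by an absolutely continuous quantity whose almost-everywhere derivative is controlled. Second, one must justify that $F(t)$ remains in $(0,a]$ throughout the interval where the estimate is asserted, so that $\rho(F(t))$ is defined. The cleanest remedy is a bootstrap: let $t^\ast$ be the supremum of times in $[t_0,T]$ for which $F(s)\leq a$ on $[t_0,s]$; the argument above is valid on $[t_0,t^\ast]$ and produces the upper bound $F(t)\leq \mathfrak{M}^{-1}(\mathfrak{M}(c)-\int_{t_0}^t\gamma) < a$ (strict because $\mathfrak{M}^{-1}(y)<a$ for $y>0$, and we can first replace $c$ by $c+\epsilon$ and then let $\epsilon\to 0^+$ using continuity of $\mathfrak{M}^{-1}$), which by continuity of $F$ forces $t^\ast$ to coincide with the upper endpoint of the hypothesized interval. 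This closes the argument.
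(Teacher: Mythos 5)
Your proposal is the standard Osgood--Bihari comparison argument, which is also how the lemma is proved in the cited source \cite[Lemma 3.4]{BCD2011}; the paper itself does not reprove it. The three main moves --- establishing that $\mathfrak{M}$ is a continuous, strictly decreasing bijection onto $[0,\infty)$ via the Osgood condition, passing from the measurable $f$ to the absolutely continuous majorant $F$, and applying the chain rule to $\mathfrak{M}\circ F$ before inverting the decreasing $\mathfrak{M}$ --- are all correct and complete. One small remark on the bootstrap closing the domain issue: the ``replace $c$ by $c+\epsilon$'' step as written is slightly confusing, since increasing $c$ \emph{shrinks} $\mathfrak{M}(c)$ and hence the validity window rather than enlarging it. A cleaner way to handle the boundary is to observe that on the set where $F\leq a$ the bound $F(t)\leq \mathfrak{M}^{-1}\bigl(\mathfrak{M}(c)-\int_{t_0}^t\gamma\bigr)$ already holds; if $F$ first touches $a$ at some $\tau\leq t$, then necessarily $\int_{t_0}^\tau\gamma=\mathfrak{M}(c)$, so the hypothesis $\int_{t_0}^t\gamma\leq \mathfrak{M}(c)$ forces $\gamma=0$ a.e.\ on $[\tau,t]$, hence $F\equiv a$ there and the conclusion $f(t)\leq a=\mathfrak{M}^{-1}(0)$ holds trivially. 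With that cosmetic fix the argument is airtight.
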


\begin{remark}
\label{rem:Os_ex_log}
A highly relevant example of an Osgood modulus of continuity is the function
\begin{equation}
\rho: [0,e^{-1}] \rightarrow [0,\infty), \qquad \rho(r) \coloneqq r\ln(r^{-1}).
\end{equation}
Indeed, the anti-derivative of the reciprocal of $\rho$ is, up to an additive constant, $-\ln\ln(r^{-1})$, so the claim follows from the fundamental theorem of calculus. The reader may check that
\begin{equation}
\mathfrak{M}(x) = \ln\ln(x^{-1}) \qquad \text{and} \qquad \mathfrak{M}^{-1}(y) = e^{-e^{y}}.
\end{equation}
\end{remark}

\subsection{Stochastic Euler equation}
\label{ssec:pre_Eul}
We briefly review the results of \cite{BFM2016} concerning the well-posedness of the stochastic Euler equation \eqref{eq:SEul_S}. We closely follow the presentation of \cite[Section 2]{BFM2016}.

We recall from the introduction that we have a fixed stochastic basis $(\Omega, \F, (\F_t)_{t\geq 0}, \mathbf{P})$ satisfying the usual assumptions, and we have independent real Brownian motions $\{W^k\}_{k=1}^\infty$ defined on this probability space and adapted to the filtration. We assume that the vector fields $\{\sigma_k\}_{k=1}^\infty$ are smooth,\footnote{This smoothness is purely qualitative: none of our estimates require more than the Lipschitz semi-norm of $\sigma_k$.} divergence-free, and satisfy
\begin{equation}
\|\usig\|_{\ell_k^2 W_x^{1,\infty}(\N\times\R^2)}^2 \coloneqq \sum_{k=1}^\infty \|\sigma_k\|_{W^{1,\infty}(\R^2)}^2 < \infty.
\end{equation}
The mathematical interpretation of the SPDE \eqref{eq:SEul_S} is based on the formally equivalent \emph{It\^{o} formulation}
\begin{equation}
\label{eq:SEul_I}
\begin{cases}
\p_t\xi + u\cdot\nabla\xi + \sum_{k=1}^\infty \sigma_k\cdot\nabla\xi \dot{W}^k = \frac{1}{2}\sum_{k=1}^\infty \paren*{(\sigma_k\cdot\nabla)\sigma_k\cdot\nabla\xi + \sigma_k^{\otimes 2} : \nabla^2\xi} \\
u = \nabla^\perp\g\ast\xi\\
\xi|_{t=0} = \xi^0
\end{cases}.
\end{equation}

\begin{mydef}[Progressively measurable]
For $T>0$, let $\xi\in L^\infty(\Omega\times[0,T]\times\R^2)$. We say that $\xi$ is \emph{$(\F_t)$-weakly progressively measurable} if for every $f\in L^1(\R^2)$, the process
\begin{equation}
t\mapsto \ipp{\xi(t),f} = \int_{\R^2}f(x)d\xi(t,x) 
\end{equation}
is progressively measurable.
\end{mydef}

The notion of weak solution to \eqref{eq:SEul_I} considered in the statement of our main result \cref{thm:main} and in this article is the following.
\begin{mydef}[Weak solution]
\label{def:WS}
Let $\xi^0\in L^\infty(\R^2)$. A \emph{$L^\infty$-weak solution} to the SPDE \eqref{eq:SEul_I} with initial datum $\xi^0$ is a $(\F_t)$-weakly progressively measurable element $\xi \in L^\infty(\Omega\times[0,T]\times\R^2)$ such that for every test function $\varphi \in C_c^\infty(\R^2)$, with probability one it holds that
\begin{equation}
\begin{split}
\ipp{\xi(t),\varphi} &= \ipp{\xi_0,\varphi} + \int_0^t \ipp{\xi(r), u(r)\cdot\nabla\varphi}dr + \sum_{k=1}^\infty \int_0^t \ipp{\xi(r),\sigma_k\cdot\nabla\varphi}dW^k \\
&\ph - \frac{1}{2}\sum_{k=1}^\infty\int_0^t \paren*{\ipp{\xi(r), (\sigma_k\cdot\nabla)\sigma_k\cdot\nabla\varphi} - \ipp{\xi(r), \sigma_k^{\otimes 2}: \nabla^2\varphi}}dr, \qquad \forall t\in [0,T].
\end{split}
\end{equation}
\end{mydef}

For the deterministic Euler vorticity equation, sufficiently nice weak solutions are given by the pushforward of the initial data under the flow, which solves a nonlocal, nonlinear ODE. For the stochastic vorticiy equation \eqref{eq:SEul_I}, the analogous SDE (in integral form) is
\begin{equation}
\label{eq:SDE_Eul}
\Phi_t(x) = x + \int_0^t \int_{\R^2}\nabla^\perp\g(\Phi_s(x)-\Phi_s(y))d\xi^0(y)ds + \sum_{k=1}^\infty \int_0^t \sigma_k(\Phi_s(x))\circ dW_s^k.
\end{equation}
We now define the stochastic analogue of a flow.

\begin{mydef}[Stochastic flow]
A \emph{stochastic continuous flow} is a measurable map $\Phi: \Omega\times[0,T]\times\R^2 \rightarrow \R^2$ such that for almost every $\om\in\Omega$ fixed, the map $\Phi(\om): [0,T]\times\R^2\rightarrow\R^2$ is continuous and for every $x\in\R^2$ fixed, the map $\Phi(x):\Omega\times[0,T]\rightarrow\R^2$ is progressively measurable.

We say that a stochastic continuous flow $\Phi$ is \emph{measure-preserving} if, there exists a subset $\tl{\Omega}\subset \Omega$ of full probability such that for every $\om\in\tl{\Omega}$ and $t\in [0,T]$ fixed, the map $\Phi(t,\om):\R^2\rightarrow\R^2$ preserves Lebesgue measure.

A stochastic continuous flow $\Phi$ is a solution to the SDE \eqref{eq:SDE_Eul} if for every $x\in\R^2$, the process $X_t\coloneqq \Phi_t(x)$ is a solution to the SDE
\begin{equation}
\begin{cases}
dX_t = u^\Phi(t,X_t)dt + \sum_{k=1}^\infty \sigma_k(X_t)\circ W_t^k \\
u^{\Phi}(t,x) \coloneqq \int_{\R^2}(\nabla^\perp\g)(x-\Phi(t,y))d\xi^0(y) \\
X|_{t=0} = x
\end{cases}.
\end{equation}
\end{mydef}

Finally, we state the main well-posedness result of Brze\'{z}niak et al. \cite{BFM2016}. As remarked in the introduction, the authors of \cite{BFM2016} consider the periodic case; however, one can adapt their proofs to treat the case of $\R^2$, which is the setting of this article. Additionally, the authors of \cite{BFM2016} impose the assumption that
\begin{equation}
\sum_{k=1}^\infty \sigma_k^i(x)\sigma_k^j(x) = c\d_{ij} \qquad \forall i,j\in\{1,2\},
\end{equation}
for some $c\in\R$. This assumption has the benefit of eliminating the term $\sum_{k=1}^\infty (\sigma_k\cdot\nabla)\sigma_k\cdot\nabla\xi$ from equation \eqref{eq:SEul_I}, thereby simplifying the computations. But as the authors of \cite{BFM2016} note (see \cite[Remark 2.5]{BFM2016}), this assumption is not necessary; and we do not impose it in the present article.

\begin{thm}[Well-posedness]
\label{thm:GWP}
Let $T>0$. For $\xi^0\in \P(\R^2)\cap L^\infty(\R^2)$, there exists a unique solution
\begin{equation}
\xi\in L^\infty(\Omega\times[0,T]; \P(\R^2)) \cap L^\infty(\Omega\times[0,T]\times\R^2)
\end{equation}
to \eqref{eq:SEul_I} in the sense of \cref{def:WS}. Moreover, there exists a unique measure-preserving stochastic continuous flow solution to \eqref{eq:SDE_Eul}, which is $C_x^\alpha$ and $C_t^\beta$ for some $\al>0$ and every $\beta<1/2$, respectively. The weak solution $\xi$ is the pushforward of the initial datum under the flow $\Phi$:
\begin{equation}
\xi(t) = \Phi_t\#\xi^0.
\end{equation}
\end{thm}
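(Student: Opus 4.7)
The plan is to follow the Lagrangian strategy of Brzeźniak--Flandoli--Maurelli \cite{BFM2016}, adapted from $\T^2$ to $\R^2$: construct the stochastic flow $\Phi$ first (together with the velocity field $u^\Phi$), then define $\xi(t)\coloneqq \Phi_t\#\xi^0$ and verify it is the unique $L^\infty$-weak solution in the sense of \cref{def:WS}. The backbone is the classical Yudovich observation (used in the deterministic case) that $\xi\in L^\infty$ forces $u$ to be log-Lipschitz via \cref{lem:PE_bnds}, so all quantitative estimates reduce to an Osgood modulus of continuity (\cref{rem:Os_ex_log}) rather than a true Lipschitz bound.

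For existence I would proceed by regularization. Let $\xi^{0,\vep}\coloneqq \xi^0 \ast \rho_\vep$ and let $\sigma_k^\vep$ be a truncation of the noise keeping only finitely many modes with smooth compactly supported coefficients, preserving divergence-freeness and the $\ell_k^2 W_x^{1,\infty}$ bound. For the regularized problem the drift is smooth so a classical fixed-point argument on the coupled system \eqref{eq:SDE_Eul} yields a unique smooth stochastic flow $\Phi^\vep$ and vorticity $\xi^\vep = \Phi_t^\vep \# \xi^{0,\vep}$. The pushforward representation gives the uniform bound $\|\xi^\vep(t)\|_{L^\infty}\leq \|\xi^{0,\vep}\|_{L^\infty} \leq \|\xi^0\|_{L^\infty}$ (since $\Phi_t^\vep$ is measure-preserving by the Stratonovich chain rule and $\nabla\cdot u^{\Phi^\vep}=\nabla\cdot\sigma_k^\vep=0$), as well as mass and energy control. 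Using \cref{lem:PE_bnds}, the velocity $u^{\Phi^\vep}$ is uniformly log-Lipschitz in $x$, $C_t^1$ from its integral definition, and the Stratonovich-to-Itô correction involving $(\sigma_k^\vep\cdot\nabla)\sigma_k^\vep$ is uniformly bounded; a standard Kolmogorov-continuity / BDG computation on \eqref{eq:SDE_Eul} then gives $\Phi^\vep$ uniformly in $C_t^\beta$ for every $\beta<1/2$ and uniformly in $C_x^\alpha$ for some $\al=\al(\|\xi^0\|_{L^\infty})>0$ (the latter comes from applying Osgood's lemma with modulus $\rho(r)=r\ln(r^{-1})$ to the path-wise estimate of $|\Phi_t^\vep(x)-\Phi_t^\vep(y)|$). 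These uniform bounds provide tightness of the laws of $\Phi^\vep$ on $C([0,T];C_{\mathrm{loc}}(\R^2))$, and a Skorokhod-type or direct subsequential limit, together with the convergence of the drift via the uniform log-Lipschitz bound, yields a stochastic continuous flow $\Phi$ solving \eqref{eq:SDE_Eul}, with the claimed Hölder regularity inherited from the uniform bounds. Setting $\xi(t)\coloneqq \Phi_t\#\xi^0$ and passing to the limit in the weak formulation of \cref{def:WS} for $\xi^\vep$ — using that the It\^o and correction terms involve $\sigma_k$ and $\nabla\sigma_k$ tested against bounded compactly supported test functions, and that $u^{\Phi^\vep}\to u^\Phi$ locally uniformly — gives a weak solution in $L^\infty(\Omega\times[0,T];\P(\R^2))\cap L^\infty(\Omega\times[0,T]\times\R^2)$. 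Measure-preservation of $\Phi_t$ (almost surely) follows by passing the measure-preservation of $\Phi_t^\vep$ through the limit, or equivalently by the divergence-free Stratonovich identity applied path-wise.

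For uniqueness, which is the main obstacle, I would argue at the level of the flow. Suppose $\Phi^{(1)},\Phi^{(2)}$ are two solutions to \eqref{eq:SDE_Eul} with the same data, inducing $\xi^{(j)}=\Phi^{(j)}\#\xi^0\in L^\infty$. Subtracting the two equations and using that the noise terms are common (same $W^k$ and same smooth $\sigma_k$), the Itô formula applied to $|\Phi_t^{(1)}(x)-\Phi_t^{(2)}(x)|^2$ produces:\ (i) a martingale term vanishing in expectation; (ii) a bounded second-order contribution from $\sigma_k$ controlled by $\|\nabla\usig\|_{\ell_k^2 L_x^\infty}^2 |\Phi_t^{(1)}(x)-\Phi_t^{(2)}(x)|^2$; and (iii) a drift contribution involving $u^{\Phi^{(1)}}(\Phi_t^{(1)}(x))-u^{\Phi^{(2)}}(\Phi_t^{(2)}(x))$. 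The last term splits into a log-Lipschitz piece in the spatial variable, bounded by the modulus $\rho(|\Phi_t^{(1)}(x)-\Phi_t^{(2)}(x)|)$ times $\|\xi^0\|_{L^\infty}$, plus a piece of the form $\|u^{\Phi^{(1)}}-u^{\Phi^{(2)}}\|_{L^\infty}$, which by the representation $u^{\Phi^{(j)}}(t,z)=\int \nabla^\perp\g(z-\Phi_t^{(j)}(y))d\xi^0(y)$ and the log-Lipschitz estimate for convolutions against $\nabla\g\ast\xi^0$ is again controlled by $\int \rho(|\Phi_t^{(1)}(y)-\Phi_t^{(2)}(y)|)d\xi^0(y)$. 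Integrating in $x$ against $\xi^0$ and taking expectations converts the path-wise estimate into the scalar function $F(t)\coloneqq \E\int |\Phi_t^{(1)}(x)-\Phi_t^{(2)}(x)|^2 d\xi^0(x)$ satisfying an integral inequality $F(t)\leq C\int_0^t [F(s) + \rho(F(s))]\, ds$ (using concavity of $\rho$ and Jensen's inequality to push the expectation and $\xi^0$-integration inside). Since $\rho(r)=r\ln(r^{-1})$ satisfies the Osgood condition, \cref{lem:Os} forces $F\equiv 0$, giving $\Phi^{(1)}=\Phi^{(2)}$ a.s., and hence $\xi^{(1)}=\xi^{(2)}$. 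Uniqueness of the Eulerian weak solution is then obtained by showing any weak solution $\xi$ to \eqref{eq:SEul_I} must coincide with $\Phi\#\xi^0$: one solves the backward characteristics against $\xi$ (using that $u$ is log-Lipschitz since $\xi\in L^\infty$) via the same Osgood argument, and checks that the resulting flow solves \eqref{eq:SDE_Eul}, hence must equal $\Phi$ by the uniqueness just established.

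The hard part is step three: closing the Osgood inequality for the flow distance in the presence of multiplicative noise, since one must keep the noise terms martingale and handle the Itô correction carefully without losing the concavity of $\rho$ needed to apply Jensen. Everything else — regularization, tightness, measure-preservation, Hölder regularity, pushforward identity — is essentially a stochastic adaptation of the classical Yudovich scheme using \cref{lem:PE_bnds} at each step to control the velocity in terms of $\|\xi\|_{L^\infty}$, and by \cref{rem:per} parallels the arguments of \cite{BFM2016} with $\T^2$ replaced by $\R^2$.
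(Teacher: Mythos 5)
The paper does not actually prove this theorem: it is quoted from Brze\'{z}niak--Flandoli--Maurelli \cite{BFM2016}, with only the remarks that their periodic arguments adapt to $\R^2$ and that their isotropy assumption on the $\sigma_k$ can be dropped. Your Lagrangian scheme --- regularize, extract the stochastic flow via uniform log-Lipschitz/Osgood bounds, set $\xi(t)=\Phi_t\#\xi^0$, and prove uniqueness through an Osgood inequality for the flow distance together with the identification of arbitrary $L^\infty$-weak solutions with the Lagrangian one --- is precisely the strategy of that reference, so your proposal follows essentially the same approach as the paper's (cited) proof.
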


\begin{remark}
\label{rem:con_LP}
Since the unique solution $\xi$ in \cref{thm:GWP} is the pushforward of the initial datum under the flow and the flow is a.s. measure-preserving for all time, it follows that the $L^p$ norms of $\xi$ are a.s. conserved on the interval $[0,T]$. 
\end{remark}

\section{The Modulated Energy}
\label{sec:CE}
We briefly review the modulated energy and its renormalization, which measure the distance between the $N$-body empirical measure $\xi_N$ and the mean-field measure $\xi$ in \cref{thm:main}. Since the results included in this section are to be found with proofs elsewhere in the literature, we include only statements with a reference to where the corresponding proofs are given.

\subsection{Setup}
\label{ssec:CE_set}
Recall that $\g(x)\coloneqq -\frac{1}{2\pi}\ln|x|$ is the 2D Coulomb potential. Following \cite{Duerinckx2016, Serfaty2017, Serfaty2020}, given $\eta>0$, we define the \emph{truncation to distance $\eta$} of $\g$ by
\begin{equation}
\label{eq:g_trunc}
\g_\eta:\R^2\rightarrow\R, \qquad \g_\eta(x) \coloneqq \begin{cases} \g(x), & |x|\geq \eta \\ \tl{\g}(\eta), & |x|<\eta \end{cases},
\end{equation}
where we have introduced the notation $\g(x) = \tl{\g}(|x|)$ to reflect that $\g$ is spherically symmetric. Evidently, $\g_\eta$ is continuous on $\R^2$ and decreases like $\g$ as $|x|\rightarrow\infty$.

\begin{lemma}[{\cite[Lemma 3.1]{Rosenzweig2020_PVMF}}]
\label{lem:g_id}
For any $\eta>0$, we have the distributional identities
\begin{align}
(\nabla\g_\eta)(x) &= -\frac{x}{2\pi|x|^2}1_{\geq \eta}(x), \label{eq:g_eta_grad_id}\\
(\D\g_\eta)(x) &= -\sigma_{\p B(0,\eta)}(x), \label{eq:g_eta_lapl_id}
\end{align}
where $\sigma_{\p B(0,\eta)}$ is the uniform probability measure on the sphere $\p B(0,\eta)$.
\end{lemma}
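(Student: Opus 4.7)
The plan is to prove the two identities separately, starting with \eqref{eq:g_eta_grad_id} and then using it to obtain \eqref{eq:g_eta_lapl_id}.

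First I would verify the gradient identity by computing the classical gradient in the two open regions determined by the definition \eqref{eq:g_trunc}. On $\{|x|<\eta\}$ the function $\g_\eta$ is the constant $\tl{\g}(\eta)$, so its classical gradient vanishes. On $\{|x|>\eta\}$ the function $\g_\eta$ agrees with $\g$, and a direct differentiation of $-\frac{1}{2\pi}\ln|x|$ gives $-\frac{x}{2\pi|x|^2}$. Since $\g_\eta$ is continuous across the sphere $\{|x|=\eta\}$ (this is exactly why the definition of $\g_\eta$ uses $\tl{\g}(\eta)$ rather than, say, zero), there is no jump of $\g_\eta$ and therefore no singular distributional correction at the boundary. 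Combining the two pieces gives \eqref{eq:g_eta_grad_id}.

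Next I would compute the distributional Laplacian by taking the divergence of \eqref{eq:g_eta_grad_id}. Pairing with a test function $\varphi\in C_c^\infty(\R^2)$ and using \eqref{eq:g_eta_grad_id}, we have
\begin{equation*}
\ipp{\Delta\g_\eta,\varphi} = -\ipp{\nabla\g_\eta,\nabla\varphi} = \int_{|x|\geq\eta}\frac{x}{2\pi|x|^2}\cdot\nabla\varphi(x)\,dx.
\end{equation*}
Applying the divergence theorem on the exterior region $\{|x|\geq\eta\}$, using that $\nabla\cdot\left(\tfrac{x}{2\pi|x|^2}\right)=0$ away from the origin, and noting that the outward unit normal to this region on $\p B(0,\eta)$ is $\nu = -x/|x|$, one obtains
\begin{equation*}
\ipp{\Delta\g_\eta,\varphi} = \int_{\p B(0,\eta)} \varphi(x)\, \frac{x}{2\pi|x|^2}\cdot\left(-\frac{x}{|x|}\right)dS(x) = -\frac{1}{2\pi\eta}\int_{\p B(0,\eta)}\varphi(x)\,dS(x).
\end{equation*}
Since the perimeter of $\p B(0,\eta)$ is $2\pi\eta$, the right-hand side is exactly $-\int_{\R^2}\varphi\,d\sigma_{\p B(0,\eta)}$, which gives \eqref{eq:g_eta_lapl_id}.

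The computation is essentially routine; the only genuine point of care is the bookkeeping of the outward normal when the divergence theorem is applied to the unbounded exterior region $\{|x|\geq\eta\}$ and the verification that no extra contribution arises at infinity (which is immediate because $\varphi$ has compact support). The continuity of $\g_\eta$ across $\p B(0,\eta)$ is the structural reason the gradient identity contains only the indicator $1_{\geq\eta}$ and no surface measure, and this is what makes the subsequent computation of $\Delta\g_\eta$ yield the clean surface measure in \eqref{eq:g_eta_lapl_id}.
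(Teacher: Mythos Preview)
Your proof is correct. The paper does not actually supply a proof of this lemma; it merely cites \cite[Lemma 3.1]{Rosenzweig2020_PVMF} and, in the surrounding text, explicitly says that for results of this type ``we generally include only statements of the results and skip repeating the proofs.'' Your argument is the standard one: the gradient identity follows because $\g_\eta$ is piecewise smooth and continuous across $\p B(0,\eta)$ (so no surface term appears), and the Laplacian identity then follows from a single application of the divergence theorem on the exterior region, using that $x/|x|^2$ is divergence-free away from the origin. The bookkeeping with the outward normal and the normalization of $\sigma_{\p B(0,\eta)}$ is handled correctly.
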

Using \cref{lem:g_id}, we define the \emph{smearing to scale $\eta$} of the Dirac mass $\delta_0$ by
\begin{equation}
\label{eq:delta_smear}
\d_0^{(\eta)} \coloneqq -\Delta \g_\eta = \sigma_{\p B(0,\eta)}.
\end{equation}
From the associativity and commutativity of convolution and the fact that $\g$ is a fundamental solution of the operator $-\D$, we see that $\d_0^{(\eta)}$ satisfies the identity
\begin{equation}
\label{eq:g_conv_smear}
(\g\ast \d_0^{(\eta)})(x) = \g_\eta(x).
\end{equation}

Next, given a probability measure $\mu$, such that $\int_{\R^2}|\ln|x|| d\mu(x)$, a vector $\ux_N \in (\R^2)^N$, and vector $\ue_N\in (\R_+)^N$, we introduce the notation
\begin{align}
H_{N,\ue_N}^{\mu,\ux_N} &\coloneqq \g\ast (\sum_{i=1}^N\d_{x_i}^{(\eta_i)}-N\mu) \label{eq:HN_trun_def},
\end{align}
where $\d_{x_i}^{(\eta_i)} = \d_{0}^{(\eta_i)}(\cdot-x_i)$. This compact notation will come in handy in \cref{sec:kprop}.

\subsection{Energy Functional}
\label{ssec:CE_EF}
For a vector $\ux_N\in (\R^2)^N$ and a measure $\mu\in \P(\R^2)\cap L^p(\R^2)$, for some $1<p\leq\infty$, which has the property
\begin{equation}
\int_{(\R^2)^2}\ln\jp{x-y}d\mu(x)d\mu(y) <\infty,
\end{equation}
for example $\mu\in L^\infty(\R^2)$ and $\supp\mu$ is compact, we define the functional
\begin{equation}
\label{eq:def_EN}
\Fr_N(\ux_N,\mu) \coloneqq \int_{(\R^2)^2\setminus \D_2}\g(x-y)d(\sum_{i=1}^N\d_{x_i}-N\mu)(x)d(\sum_{i=1}^N\d_{x_i}-N\mu)(y)
\end{equation}
where $\D_2\coloneqq \{(x,y)\in (\R^2)^2 : x=y\}$. Note that $N^2\Fr_N^{avg} = \Fr_N$. Our first lemma shows that the quantity $H_{N,\ue_N}^{\mu,\ux_N}$ defined in \eqref{eq:HN_trun_def} belongs to $\dot{H}^1(\R^2)$.

\begin{lemma}[{\cite[Lemma 3.4]{Rosenzweig2020_PVMF}}]
\label{lem:fin_en}
Fix $N\in\N$. Let $\mu\in L^p(\R^2)$, for some $2<p\leq \infty$, such that $\int_{\R^2}\ln\jp{x}|\mu(x)|dx<\infty$, and let $\ux_N\in (\R^2)^N\setminus\D_N$. Then for any $\ue_N\in (\R_+)^N$, we have the identity
\begin{equation}
\label{eq:renorm_nts}
\begin{split}
&\int_{(\R^2)^2}\g(x-y)d(N\mu-\sum_{i=1}^N\d_{x_i}^{(\eta_i)})(x)d(N\mu-\sum_{i=1}^N\d_{x_i}^{(\eta_i)})(y) =\int_{\R^2} |(\nabla H_{N,\ul{\eta}_N}^{\mu,\ux_N})(x)|^2dx,
\end{split}
\end{equation}
In particular, the right-hand side is finite if and only if $\mu$ has finite Coulomb energy.
\end{lemma}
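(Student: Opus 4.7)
The identity to prove is the double integration-by-parts formula
\[
\iint_{(\R^2)^2}\g(x-y)\,d\nu(x)\,d\nu(y)\;=\;\int_{\R^2}|\nabla H|^2\,dx,
\]
where $\nu := \sum_{i=1}^N\d_{x_i}^{(\eta_i)} - N\mu$ and $H := \g\ast\nu = H_{N,\ul{\eta}_N}^{\mu,\ux_N}$ (the left-hand side being unchanged under $\nu\leadsto -\nu$). Formally this is immediate from $-\D\g=\d_0$, which gives $-\D H=\nu$ distributionally, so that
\[
\iint\g(x-y)\,d\nu(x)\,d\nu(y)\;=\;\int H\,d\nu\;=\;-\int H\D H\,dx\;=\;\int|\nabla H|^2\,dx
\]
after integrating by parts. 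My plan is to justify this rigorously under the stated minimal hypotheses and to extract the ``in particular'' clause from the identity.

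My first step would be to verify that $H$ is a continuous function on $\R^2$ with locally bounded gradient. This follows from \cref{lem:PE_bnds} applied to $\g\ast\mu$ (permissible since $\mu\in L^p$ with $p>2$ and satisfies the logarithmic moment hypothesis), together with the explicit identity $\g\ast\d_{x_i}^{(\eta_i)}=\g_{\eta_i}(\cdot-x_i)$ from \eqref{eq:g_conv_smear}, the latter being continuous with gradient pointwise bounded by $1/(2\pi\eta_i)$ in view of \eqref{eq:g_eta_grad_id}. The crucial structural observation is that $\nu$ is a finite signed Borel measure of total mass $\int d\nu = N-N=0$; combined with the logarithmic moment bound on $\mu$, this cancellation forces the leading $-\frac{N}{2\pi}\ln|x|$ growths from $\sum_i\g_{\eta_i}(\cdot-x_i)$ and from $N(\g\ast\mu)$ to annihilate, so that $H$ and $\nabla H$ acquire enough decay at infinity for the next step.

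The core of the argument is integration by parts against a smooth radial cutoff $\chi_R$ equal to $1$ on $B(0,R)$, supported in $B(0,2R)$, with $\|\nabla\chi_R\|_\infty\lesssim 1/R$. Using $-\D H=\nu$ distributionally, one gets
\[
\int_{\R^2}\chi_R\, H\,d\nu \;=\; \int_{\R^2}\chi_R\,|\nabla H|^2\,dx \;+\; \int_{\R^2} H\,\nabla\chi_R\cdot\nabla H\,dx.
\]
Sending $R\to\infty$: monotone convergence handles the first right-hand term; the boundary term I would control by $R^{-1}$ times the integral of $|H||\nabla H|$ over the annulus $R\leq|x|\leq 2R$, and show it vanishes using the decay estimates from the previous paragraph; and for the left side I would invoke dominated convergence, splitting $\nu = -N\mu + \sum_i\d^{(\eta_i)}_{x_i}$ and using the pointwise bound on $H$ from \cref{lem:PE_bnds} as an integrable majorant against $|\nu|$. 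This yields the asserted identity, understood as an equality in $[0,+\infty]$.

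The final ``in particular'' clause then follows from the decomposition $H = \sum_i\g_{\eta_i}(\cdot-x_i) - N(\g\ast\mu)$: each $\nabla\g_{\eta_i}(\cdot-x_i)$ is bounded and lies in $L^2_{\mathrm{loc}}$, while its non-$L^2$ tail at infinity is identical (up to sign and multiplicity) to that of $\nabla\g$, and these tails are cancelled by $N\nabla(\g\ast\mu)$ precisely because $\mu$ is a probability measure; hence $\|\nabla H\|_{L^2}$ is finite iff $\|\nabla(\g\ast\mu)\|_{L^2}$ is, i.e., iff $\mu$ has finite Coulomb energy. The hardest part I anticipate is extracting sharp enough decay of $H$ and $\nabla H$ at infinity from only the logarithmic moment bound, since this is what both makes the cutoff boundary term vanish and provides the dominated-convergence majorant on the left; once this decay is in hand, the remaining manipulations are routine.
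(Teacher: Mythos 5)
Your overall strategy for the identity—interpret $-\Delta H = \nu$ distributionally, pass from $\iint\g(x-y)\,d\nu\,d\nu$ to $\int H\,d\nu$ via Fubini, then integrate by parts against a cutoff and send the radius to infinity—is sound and in the spirit of the proof in the cited paper, and you correctly identify the crucial mass cancellation $\int d\nu=0$ (implicit from the standing convention in \cref{ssec:CE_set} that $\mu$ is a probability measure). However, there is a genuine error in your treatment of the ``in particular'' clause, and the decay estimates you lean on for the boundary term are stated more optimistically than the hypotheses permit.

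The error: you write that $\|\nabla H\|_{L^2}$ is finite iff $\|\nabla(\g\ast\mu)\|_{L^2}$ is, and then identify the latter condition with finiteness of the Coulomb energy of $\mu$. In two dimensions this is false for probability measures. Since $\int\mu=1$, the far-field asymptotics give $\nabla(\g\ast\mu)(x)\sim\nabla\g(x)\sim|x|^{-1}$ as $|x|\to\infty$, and $\int_{|x|>1}|x|^{-2}\,dx=\infty$; so $\nabla(\g\ast\mu)\notin L^2(\R^2)$ regardless of whether $\mu$ has finite Coulomb energy. The quantity $\iint\g(x-y)\,d\mu(x)\,d\mu(y)$ is \emph{not} the same as $\|\nabla(\g\ast\mu)\|_{L^2}^2$ when $\int\mu\neq 0$: they differ precisely by the divergent low-frequency contribution $\int_{|\xi|<1}|\xi|^{-2}|\hat\mu(\xi)|^2\,d\xi$. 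By the same token, your claim that the tails of $\sum_i\nabla\g_{\eta_i}(\cdot-x_i)$ are ``cancelled by $N\nabla(\g\ast\mu)$'' and that this reduces $\|\nabla H\|_{L^2}<\infty$ to $\|\nabla(\g\ast\mu)\|_{L^2}<\infty$ is self-contradictory—the cancellation is exactly what makes $\nabla H$ potentially $L^2$ while each individual piece is not. The clean way to extract the ``in particular'' statement is instead to expand the left-hand side bilinearly in $\nu=\sum_i\d_{x_i}^{(\eta_i)}-N\mu$: all the self- and cross-terms involving the smeared Diracs are explicitly finite under the hypotheses (they are integrals of $\g$ against compactly supported measures of bounded density, paired against $\mu$ via \cref{lem:PE_bnds}), so the only term whose finiteness is in question is $N^2\iint\g\,d\mu^{\otimes 2}$, which is the Coulomb energy. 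The equivalence then follows directly from the identity you've just proved.

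On the main identity: the boundary term argument needs more care than ``$H=O(1/|x|)$, $\nabla H=O(1/|x|^2)$,'' because only a logarithmic moment $\int\ln\jp{x}|\mu|<\infty$ is assumed, not a first moment. Under this weaker hypothesis $H(x)$ is only $o(1)$ (not $O(1/|x|)$) as $|x|\to\infty$, even after mass cancellation. A more robust route is to first assume $\nabla H\in L^2$ (the other direction being handled separately or by noting both sides are $+\infty$), then take a sequence $R_n\to\infty$ along which $\int_{R_n\leq|x|\leq 2R_n}|\nabla H|^2\to 0$ (possible since the dyadic pieces of a convergent integral tend to zero), and bound the boundary term by Cauchy--Schwarz as $\|H\|_{L^\infty(R_n\leq|x|\leq 2R_n)}\,\|\nabla H\|_{L^2(R_n\leq|x|\leq 2R_n)}$, which then vanishes using only that $H$ is bounded near infinity (a consequence of mass cancellation plus \cref{lem:PE_bnds}).
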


The next proposition is essentially proven in \cite[Section 2.1]{PS2017} and \cite[Section 5]{Serfaty2020} in the greater generality of Riesz, not just Coulomb, interactions. The version presented below is from \cite[Proposition 3.5]{Rosenzweig2020_PVMF}.
\begin{prop}[{\cite[Proposition 3.5]{Rosenzweig2020_PVMF}}]
\label{prop:CE}
Let $\mu \in \P(\R^2)\cap L^p(\R^2)$, for some $2<p\leq\infty$, such that $\int_{\R^2}\ln\jp{x} |\mu(x)|dx<\infty$, and let $\ux_N\in (\R^2)^N\setminus\D_N$. Then
\begin{equation}
\label{eq:EN_renorm_lim}
\Fr_N(\ux_N,\mu) = \lim_{|\ul{\eta}_N|\rightarrow 0} \paren*{\int_{\R^2}|(\nabla H_{N,\ue_N}^{\mu,\ux_N})(x)|^2dx - \sum_{i=1}^N\tl{\g}(\eta_i)}
\end{equation}
and there exists a constant $C_p>0$, such that
\begin{equation}
\label{eq:EN_renorm_bnd}
\begin{split}
\sum_{1\leq i\neq j\leq N} \paren*{\g(x_i-x_j)-\tl{\g}(\eta_i)}_{+} &\leq \Fr_N(\ux_N,\mu) - \int_{\R^2}|(\nabla H_{N,\ul{\eta}_N}^{\mu,\ux_N})(x)|^2dx + \sum_{i=1}^N\tl{\g}(\eta_i) \\
&\ph + C_pN \|\mu\|_{L^p(\R^2)}\sum_{i=1}^N\eta_i^{2(p-1)/p},
\end{split}
\end{equation}
where $(\cdot)_{+}\coloneqq \max\{\cdot,0\}$.
\end{prop}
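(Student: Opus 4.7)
My plan is to derive both claims from a single algebraic decomposition that compares $\int_{\R^2}|\nabla H_{N,\ue_N}^{\mu,\ux_N}|^2$ with $\Fr_N(\ux_N,\mu)$ term by term. Writing $\mu_N^{(\ue_N)}\coloneqq \sum_{i=1}^N\d_{x_i}^{(\eta_i)}$, \cref{lem:fin_en} expresses the Dirichlet integral as the double integral of $\g$ against $(\mu_N^{(\ue_N)}-N\mu)^{\otimes 2}$ over all of $(\R^2)^2$, while $\Fr_N(\ux_N,\mu)$ is the analogous integral with point masses $\d_{x_i}$ in place of $\d_{x_i}^{(\eta_i)}$ and with $\D_2$ removed. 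Expanding both squares and noting that the $N^2\int \g\, d\mu^{\otimes 2}$ pieces cancel identically, the identity reduces to
\begin{equation*}
\Fr_N(\ux_N,\mu) - \int_{\R^2}|\nabla H_{N,\ue_N}^{\mu,\ux_N}|^2 dx + \sum_{i=1}^N \tl\g(\eta_i) = \sum_{1\leq i\neq j\leq N}\brak*{\g(x_i-x_j)-A_{ij}} - 2N\sum_{i=1}^N R_i,
\end{equation*}
where $A_{ij}\coloneqq \int \g(x-y)d\d_{x_i}^{(\eta_i)}(x)d\d_{x_j}^{(\eta_j)}(y)$ and $R_i\coloneqq \int [\g(x_i-y)-\g_{\eta_i}(x_i-y)]d\mu(y)$; the diagonal self-interactions $A_{ii}=\tl\g(\eta_i)$ (since $|x-y|\equiv \eta_i$ on $\p B(x_i,\eta_i)$) account for the $\sum_i \tl\g(\eta_i)$ correction that has been pulled to the left.

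The crux is to bound $A_{ij}$ for $i\neq j$. Using \eqref{eq:g_conv_smear}, I rewrite $A_{ij}=\int \g_{\eta_i}(y-x_i)d\d_{x_j}^{(\eta_j)}(y)$, i.e.\ the spherical average of $\g_{\eta_i}(\cdot -x_i)$ over $\p B(x_j,\eta_j)$. Since \cref{lem:g_id} gives $-\Delta \g_{\eta_i}=\sigma_{\p B(0,\eta_i)}\geq 0$, the function $\g_{\eta_i}$ is superharmonic on $\R^2$, so this spherical average is at most its value at the center $\g_{\eta_i}(x_j-x_i)$; combined with the pointwise bound $\g_{\eta_i}\leq \g$, this yields $A_{ij}\leq \g(x_i-x_j)$. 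Separately, the uniform bound $\g_{\eta_i}\leq \tl\g(\eta_i)$ (immediate from \eqref{eq:g_trunc} and monotonicity of $\tl\g$) gives $A_{ij}\leq \tl\g(\eta_i)$. Therefore
\begin{equation*}
\g(x_i-x_j)-A_{ij} \geq \max\brac*{0,\; \g(x_i-x_j)-\tl\g(\eta_i)} = \paren*{\g(x_i-x_j)-\tl\g(\eta_i)}_+,
\end{equation*}
reproducing the left-hand side of \eqref{eq:EN_renorm_bnd} after summing in $i\neq j$.

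It remains to handle the cross-term $R_i$, which is nonnegative and supported in $B(x_i,\eta_i)$ (where $\g-\g_{\eta_i}=\tfrac{1}{2\pi}\ln(\eta_i/|x_i-y|)\geq 0$). H\"older's inequality with exponent $p'\in [1,2)$ conjugate to $p\in(2,\infty]$, together with the elementary rescaling $\int_{B(0,\eta)}|\ln(\eta/|z|)|^{p'}dz \lesssim_p \eta^2$, gives $R_i\lesssim_p \|\mu\|_{L^p(\R^2)}\eta_i^{2(p-1)/p}$. Substituting this into the identity and absorbing $-2N\sum_i R_i$ into the error yields \eqref{eq:EN_renorm_bnd}. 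For the limit \eqref{eq:EN_renorm_lim}: once $|\ue_N|$ is small enough that $\eta_i+\eta_j < |x_i-x_j|$ for all $i\neq j$ (possible since $\ux_N\notin\D_N$), the support $\p B(x_j,\eta_j)$ lies in $\{|y-x_i|\geq \eta_i\}$, where $\g_{\eta_i}(\cdot -x_i)$ coincides with the harmonic function $\g(\cdot-x_i)$, and the classical mean value property gives $A_{ij}=\g(x_i-x_j)$ exactly; combined with $R_i\to 0$, this forces the right-hand side of the identity to vanish, proving \eqref{eq:EN_renorm_lim}. I expect no serious obstacle beyond carefully establishing the superharmonicity/mean-value chain of inequalities for $A_{ij}$; the rest is algebraic bookkeeping and a standard $L^{p'}$ computation.
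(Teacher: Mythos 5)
Your proof is correct, and it is essentially the approach used in \cite[Proposition 3.5]{Rosenzweig2020_PVMF} (and in the earlier works of Serfaty and Petrache--Serfaty that it draws on): expand $\Fr_N$ and the smeared Dirichlet energy via \cref{lem:fin_en}, cancel the $N^2$-term, and control the atomic cross-terms $A_{ij}$ and the mean-field cross-terms $R_i$ separately. The standard proofs derive the inequalities $A_{ij}\leq\min\{\g(x_i-x_j),\tl\g(\eta_i)\}$, the equality $A_{ij}=\g(x_i-x_j)$ once the balls separate, and the $L^{p'}$ bound on $R_i$ from the explicit form of $\g_\eta=\g*\d_0^{(\eta)}$ and the mean-value property exactly as you do; your packaging via superharmonicity of $\g_\eta$ is a clean equivalent formulation of the same spherical-mean facts, not a genuinely different route.
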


The following corollary of \cref{prop:CE} relaxes the $\mu\in L^\infty(\R^2)$ assumption in \cite[Corollary 3.4]{Serfaty2020} and incorporates an additional parameter $\ep_1$. The version below is from \cite[Corollary 3.6]{Rosenzweig2020_PVMF}.
\begin{cor}[{\cite[Corollary 3.6]{Rosenzweig2020_PVMF}}]
\label{cor:grad_H}
Fix $N\in\N$. Let $\mu\in L^p(\R^2)$, for some $2<p\leq \infty$, such that $\int_{\R^2}\ln\jp{x}|\mu(x)|dx<\infty$, and let $\ux_N\in (\R^2)^N\setminus \D_N$. If for any $0<\ep_1 \ll 1$, we define
\begin{equation}
r_{i,\ep_1} \coloneqq \min\{\frac{1}{4}\min_{{1\leq j\leq N}\atop {j\neq i}} |x_i-x_j|, \ep_1\} \quad \text{and} \quad  \ur_{N,\ep_1}\coloneqq (r_{1,\ep_1},\ldots,r_{N,\ep_1}),
\end{equation}
then there exists a constant $C_p>0$ such that
\begin{equation}
\label{eq:g_r_sum_bnd}
\sum_{i=1}^N \tl{\g}(r_{i,\ep_1}) \leq \Fr_N(\ux_N,\mu) + 2N\tl{\g}(\ep_1) + C_pN^2\|\mu\|_{L^p(\R^2)}\ep_1^{\frac{2(p-1)}{p}}
\end{equation}
and
\begin{equation}
\label{eq:grad_H_r_bnd}
\int_{\R^2}|(\nabla H_{N,\ul{r}_{N,\ep_1}}^{\mu,\ux_N})(x)|^2dx \leq \Fr_N(\ux_N,\mu) + N\tl{\g}(\ep_1) + C_pN^2\|\mu\|_{L^p(\R^2)}\ep_1^{\frac{2(p-1)}{p}}.
\end{equation}
\end{cor}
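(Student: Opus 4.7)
The plan is to apply \cref{prop:CE} twice, with two different choices of the smearing radii $\ueta_N$, and then combine the resulting estimates.

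For the first inequality, take $\eta_i = \ep_1$ uniformly in \cref{prop:CE} and drop the non-negative term $\int|\nabla H_{N,(\ep_1,\ldots,\ep_1)}^{\mu,\ux_N}|^2$ from the right-hand side of \eqref{eq:EN_renorm_bnd}. This yields
\begin{equation*}
\sum_{1\leq i\neq j\leq N} \paren*{\g(x_i-x_j)-\tl{\g}(\ep_1)}_{+} \leq \Fr_N(\ux_N,\mu) + N\tl{\g}(\ep_1) + C_p N^2 \|\mu\|_{L^p(\R^2)} \ep_1^{\frac{2(p-1)}{p}}.
\end{equation*}
For each $i$, let $j(i)\in\{1,\ldots,N\}\setminus\{i\}$ realize the nearest-neighbor distance $m_i \coloneqq |x_i - x_{j(i)}|$. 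A two-case analysis---whether $m_i \geq 4\ep_1$ (in which case $r_{i,\ep_1} = \ep_1$) or $m_i < 4\ep_1$ (in which case $r_{i,\ep_1} = m_i/4$)---combined with the identity $\tl{\g}(m_i/4) = \tl{\g}(m_i) + (\ln 4)/(2\pi)$ produces the pointwise bound
\begin{equation*}
\tl{\g}(r_{i,\ep_1}) \leq \tl{\g}(\ep_1) + \frac{\ln 4}{2\pi} + \paren*{\g(x_i - x_{j(i)}) - \tl{\g}(\ep_1)}_{+}.
\end{equation*}
Summing in $i$, bounding the resulting sum of positive parts by the full pairwise double sum above, and absorbing the additive $N\ln 4 /(2\pi)$ into $N\tl{\g}(\ep_1)$ (valid since $\ep_1 \ll 1$) yields \eqref{eq:g_r_sum_bnd}.

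For the second inequality, apply \cref{prop:CE} with $\eta_i = r_{i,\ep_1}$. By the very definition $r_{i,\ep_1} \leq m_i/4$, so $|x_i - x_j| \geq 4 r_{i,\ep_1} > r_{i,\ep_1}$ for every $j \neq i$; hence the positive-part sum on the left-hand side of \eqref{eq:EN_renorm_bnd} vanishes identically. Rearranging and using $r_{i,\ep_1} \leq \ep_1$ to control the error term gives
\begin{equation*}
\int_{\R^2}|(\nabla H_{N,\ur_{N,\ep_1}}^{\mu,\ux_N})(x)|^2 dx \leq \Fr_N(\ux_N,\mu) + \sum_{i=1}^N \tl{\g}(r_{i,\ep_1}) + C_p N^2 \|\mu\|_{L^p(\R^2)} \ep_1^{\frac{2(p-1)}{p}}.
\end{equation*}
Substituting the first-step bound \eqref{eq:g_r_sum_bnd} for $\sum_{i=1}^N \tl{\g}(r_{i,\ep_1})$ and relabeling the resulting numerical factors into $C_p$ then yields \eqref{eq:grad_H_r_bnd}.

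The main technical content is the case analysis in the first step: one must translate the definition of $r_{i,\ep_1}$ (a per-particle quantity depending on the nearest-neighbor distance) into a form that matches the pairwise positive-part sum generated by \cref{prop:CE}. The choice of the factor $1/4$ in the definition of $r_{i,\ep_1}$ is what makes both the absorption of the constant $(\ln 4)/(2\pi)$ into the logarithmically divergent $N\tl{\g}(\ep_1)$ harmless and the vanishing of the positive-part sum in the second step immediate. Everything else is a bookkeeping exercise in how the error $N^2\|\mu\|_{L^p}\ep_1^{2(p-1)/p}$ arising from $\sum_i \eta_i^{2(p-1)/p}\leq N\ep_1^{2(p-1)/p}$ propagates through the two applications of \cref{prop:CE}.
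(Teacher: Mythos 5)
Your overall strategy---two applications of \cref{prop:CE}, one with the uniform radius $\ep_1$ and one with $\ueta_N=\ur_{N,\ep_1}$, glued together by the per-particle case analysis on $r_{i,\ep_1}$---is exactly the approach one expects for this result, and the case analysis itself is carried out correctly. In particular, the observation that $|x_i-x_j|\geq 4 r_{i,\ep_1}$ for all $j\neq i$ kills the positive-part sum in the second application, and the pointwise bound
$\tl\g(r_{i,\ep_1}) \leq \tl\g(\ep_1) + \frac{\ln 4}{2\pi} + \big(\g(x_i-x_{j(i)})-\tl\g(\ep_1)\big)_+$
is verified correctly in each case.

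The issue is in the final bookkeeping, where two claimed ``absorptions'' do not actually go through as stated. First, after summing the pointwise bound and invoking the first application of \cref{prop:CE}, you arrive at
$\sum_i \tl\g(r_{i,\ep_1}) \leq \Fr_N(\ux_N,\mu) + 2N\tl\g(\ep_1) + N\tfrac{\ln 4}{2\pi} + C_p N^2\|\mu\|_{L^p}\ep_1^{2(p-1)/p}$.
The extra additive $N(\ln 4)/(2\pi)$ is of size $O(N)$ and is not dominated by the error term $C_p N^2\|\mu\|_{L^p}\ep_1^{2(p-1)/p}$ (which vanishes as $\ep_1\to 0$ and can be tiny when $\|\mu\|_{L^p}$ is small), nor can it literally be ``absorbed into $N\tl\g(\ep_1)$'' while keeping the numerical coefficient fixed at $2$; the sharp statement \eqref{eq:g_r_sum_bnd} has $2N\tl\g(\ep_1)$ with no slack. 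Second, and more significantly, substituting \eqref{eq:g_r_sum_bnd} for $\sum_i\tl\g(r_{i,\ep_1})$ into your second-step bound doubles the modulated energy, yielding a right-hand side of the form $2\,\Fr_N(\ux_N,\mu) + 2N\tl\g(\ep_1) + \cdots$, whereas \eqref{eq:grad_H_r_bnd} has coefficient exactly $1$ on both $\Fr_N$ and $N\tl\g(\ep_1)$. The factor $2$ on $\Fr_N$ cannot be ``relabeled into $C_p$'' since $\Fr_N$ carries no $C_p$ in the statement and is precisely the quantity being controlled. In short: your chain of inequalities proves a genuinely weaker estimate (harmless for the $\lesssim$-level applications of this corollary later in the paper, but not the verbatim statement), and the closing remark that ``everything else is a bookkeeping exercise'' glosses over the fact that your bookkeeping produces different constants than those stated. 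The stated coefficient-$1$ bound on $\Fr_N$ in \eqref{eq:grad_H_r_bnd} is not a consequence of substituting \eqref{eq:g_r_sum_bnd} into the $\ueta_N=\ur_{N,\ep_1}$ application of \cref{prop:CE}; if the original reference indeed obtains it with coefficient $1$, the proof of \eqref{eq:grad_H_r_bnd} must avoid passing through \eqref{eq:g_r_sum_bnd}.
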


The final result of this subsection is a lemma which uses the modulated energy $\Fr_N(\ux_N,\mu)$ to count the number of distinct pairs $(i,j)$, such that the distance between the particles $x_i$ and $x_j$ is below a prescribed threshold. The lemma presented below is from \cite[Lemma 3.7]{Rosenzweig2020_PVMF}.
\begin{lemma}[{\cite[Lemma 3.7]{Rosenzweig2020_PVMF}}]
\label{lem:count}
Fix $N\in\N$. Then there exists a constant $C_p>0$, such that for any $\ux_N\in (\R^2)^N\setminus\D_N$ and $\mu\in\P(\R^2)\cap L^p(\R^2)$, for some $2<p\leq\infty$, with finite Coulomb energy and such that $\int_{\R^2}\ln\jp{x}|\mu(x)|dx<\infty$, we have the cardinality bound
\begin{equation}
\begin{split}
\left|\{(i,j)\in\N^2: i\neq j \enspace \text{and} \enspace |x_i-x_j|\leq  \ep_3\}\right| \lesssim \Fr_N(\ux_N,\mu) + N\tl{\g}(\ep_3) + C_pN^2 \|\mu\|_{L^p(\R^2)} \ep_3^{\frac{2(p-1)}{p}},
\end{split}
\end{equation}
for any $0<\ep_3\ll 1$. 
\end{lemma}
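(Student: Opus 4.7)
The plan is to derive the cardinality bound as a direct consequence of the lower inequality \eqref{eq:EN_renorm_bnd} from \cref{prop:CE}, by specializing the truncation vector $\ue_N$ to a single uniform scale tied to the threshold $\ep_3$. First, I apply \eqref{eq:EN_renorm_bnd} with the choice $\eta_i=\eta$ for all $i$, for some $\eta>0$ to be fixed below. Since the quantity $\int_{\R^2}|(\nabla H_{N,\ul{\eta}_N}^{\mu,\ux_N})|^2 dx$ appearing on the right-hand side of \eqref{eq:EN_renorm_bnd} is manifestly nonnegative, it may be dropped, yielding
\begin{equation*}
\sum_{1\leq i\neq j\leq N}\paren*{\g(x_i-x_j)-\tl{\g}(\eta)}_{+} \leq \Fr_N(\ux_N,\mu) + N\tl{\g}(\eta) + C_p N^2\|\mu\|_{L^p(\R^2)}\eta^{\frac{2(p-1)}{p}}.
\end{equation*}

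Next, since $\tl{\g}(r)=-\frac{1}{2\pi}\ln r$ is strictly decreasing, any pair $(i,j)$ with $i\neq j$ and $|x_i-x_j|\leq \ep_3$ satisfies $\g(x_i-x_j)=\tl{\g}(|x_i-x_j|)\geq \tl{\g}(\ep_3)$. Choosing $\eta\coloneqq 2\ep_3$ then gives
\begin{equation*}
\paren*{\g(x_i-x_j)-\tl{\g}(2\ep_3)}_{+} \geq \tl{\g}(\ep_3) - \tl{\g}(2\ep_3) = \frac{\ln 2}{2\pi}
\end{equation*}
for every such close pair, so the left-hand side of the previous display is bounded below by $\tfrac{\ln 2}{2\pi}\, |\{(i,j)\in\N^2 : i\neq j, \ |x_i-x_j|\leq\ep_3\}|$. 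Dividing through by $\tfrac{\ln 2}{2\pi}$, using $\tl{\g}(2\ep_3)\leq \tl{\g}(\ep_3)$ and absorbing the factor $2^{2(p-1)/p}$ into the constant $C_p$, I obtain the claimed cardinality bound. I do not anticipate any real obstacle: the only point requiring minor care is that the constant produced by this argument carries a factor $2\pi/\ln 2$, which is harmless and consistent with the $\lesssim$ notation in the statement; all remaining manipulations are elementary consequences of \cref{prop:CE} and the monotonicity of $\tl{\g}$.
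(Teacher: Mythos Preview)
Your argument is correct. The paper does not reproduce a proof of this lemma here (it simply cites \cite[Lemma 3.7]{Rosenzweig2020_PVMF}), but your derivation---specializing \eqref{eq:EN_renorm_bnd} to a uniform truncation scale $\eta=2\ep_3$, dropping the nonnegative $\|\nabla H\|_{L^2}^2$ term, and extracting the constant lower bound $\frac{\ln 2}{2\pi}$ per close pair---is precisely the standard route and matches the approach in the cited reference.
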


\subsection{Coerciveness of the Energy}
\label{ssec:CE_coer}
In this final subsection of \cref{sec:CE}, we record a proposition showing that the functional $\Fr_N(\ux_N,\mu)$ controls convergence in the weak-* topology for the Besov space $B_{2,\infty}^{-1}(\R^2)$, as $N\rightarrow\infty$. For this Besov scale, $B_{2,\infty}^{-1}(\R^2)$ is the endpoint space containing the Dirac mass. The proposition also establishes the coerciveness of the normalized modulated energy $\Fr_N^{avg}(\ux_N,\mu)$, in the sense that it controls convergence in the weak-* topology on $\M(\R^2)$ as $N\rightarrow\infty$.

\begin{prop}[{\cite[Proposition 3.10]{Rosenzweig2020_PVMF}}]
\label{prop:bes_conv}
Let $N\in\N$ and $\ux_N\in (\R^2)^N\setminus\D_N$. Then for any $\mu\in \P(\R^2)\cap L^{p}(\R^2)$, for some $2< p\leq \infty$, and $\varphi\in B_{2,1}^{1}(\R^2)$, we have the estimate
\begin{equation}
\label{eq:B2inf_conv}
\begin{split}
\left|\int_{\R^2}\varphi(x)d(\sum_{i=1}^N \d_{x_i}-N\mu)(x)\right| &\lesssim  N\paren*{\frac{\ep_1\|\varphi\|_{B_{2,1}^1(\R^2)}}{\ep_2} + \sum_{k\geq |\log_2\ep_2|} 2^k\|P_k\varphi\|_{L^2(\R^2)}} \\
&\ph +\|\nabla\varphi\|_{L^2(\R^2)}\paren*{\Fr_N(\ux_N,\mu) + N|\ln \ep_1| + C_p\|\mu\|_{L^p(\R^2)} N^2 \ep_1^{\frac{2(p-1)}{p}}}^{1/2},
\end{split}
\end{equation}
for any parameters $0<\ep_1<\ep_2\ll 1$. Consequently, for any $s<-1$, 
\begin{align}
\|\mu-\frac{1}{N}\sum_{i=1}^N\d_{x_i}\|_{H^s(\R^2)} \lesssim_{s,p} |\Fr_N^{avg}(\ux_N,\mu)|^{1/2} + N^{-1/2}|\ln N|^{1/2} + \paren*{1+\|\mu\|_{L^p(\R^2)}}N^{-1/2}, \label{eq:Hs_conv}
\end{align}
and if $\Fr_N^{avg}(\ux_N,\mu)\rightarrow 0$, as $N\rightarrow\infty$, then
\begin{equation}
\frac{1}{N}\sum_{i=1}^N \d_{x_i}\xrightharpoonup[N\rightarrow\infty]{*} \mu \ \text{in $\M(\R^2)$} \label{eq:M_conv}.
\end{equation}
\end{prop}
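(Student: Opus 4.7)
The plan is to introduce an auxiliary smearing parameter $\ep_1 > 0$ and decompose
$$\sum_{i=1}^N \d_{x_i} - N\mu = \sum_{i=1}^N (\d_{x_i} - \d_{x_i}^{(\ep_1)}) + \Bigl(\sum_{i=1}^N \d_{x_i}^{(\ep_1)} - N\mu\Bigr),$$
pairing each piece with $\varphi \in B_{2,1}^1(\R^2)$ by a different method. The second piece -- the smeared discrepancy -- is controlled by the identity $\sum_i \d_{x_i}^{(\ep_1)} - N\mu = -\Delta H_{N,\ur_N}^{\mu,\ux_N}$ with $\ur_N = (\ep_1, \ldots, \ep_1)$, which comes from \eqref{eq:g_conv_smear} and $-\Delta \g = \d_0$. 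Integration by parts, Cauchy--Schwarz, and \cref{cor:grad_H} immediately give
$$\Bigl|\int \varphi\, d\bigl(\sum\nolimits_i \d_{x_i}^{(\ep_1)} - N\mu\bigr)\Bigr| \leq \|\nabla \varphi\|_{L^2}\bigl(\Fr_N(\ux_N,\mu) + N|\ln \ep_1| + C_p N^2 \|\mu\|_{L^p} \ep_1^{\frac{2(p-1)}{p}}\bigr)^{1/2}.$$

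For the smearing defect, each single-particle contribution $(\d_{x_i} - \sigma_{\p B(x_i,\ep_1)})(\psi) = \psi(x_i) - \int \psi\, d\sigma_{\p B(x_i,\ep_1)}$ admits two competing bounds: $|\cdot| \leq 2\|\psi\|_{L^\infty}$ and $|\cdot| \leq \ep_1 \|\nabla \psi\|_{L^\infty}$. I would Littlewood--Paley decompose $\varphi$ and cut at frequency $2^k \sim \ep_2^{-1}$: use the Lipschitz bound on low-frequency blocks together with the 2D Bernstein estimate $\|\nabla P_k \varphi\|_{L^\infty} \lesssim 2^{2k}\|P_k\varphi\|_{L^2}$, and use the $L^\infty$ bound on high-frequency blocks with Bernstein's $\|P_k\varphi\|_{L^\infty} \lesssim 2^k\|P_k\varphi\|_{L^2}$. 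The constraint $2^k \leq \ep_2^{-1}$ on low frequencies lets me trade one factor of $2^k$ for $\ep_2^{-1}$, producing the bound
$$N \ep_1 \sum_{k \leq |\log_2 \ep_2|} 2^{2k} \|P_k \varphi\|_{L^2} \leq N \frac{\ep_1}{\ep_2} \sum_{k \leq |\log_2 \ep_2|} 2^k \|P_k \varphi\|_{L^2} \leq N \frac{\ep_1}{\ep_2} \|\varphi\|_{B_{2,1}^1},$$
while the high-frequency side yields directly $N \sum_{k \geq |\log_2 \ep_2|} 2^k \|P_k \varphi\|_{L^2}$. Assembling the three contributions proves \eqref{eq:B2inf_conv}.

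The two consequences are then exercises in duality and parameter optimization. For \eqref{eq:Hs_conv}, note that for $s < -1$, Cauchy--Schwarz on Littlewood--Paley blocks combined with $\sum_{k\geq 1} 2^{-2k|s+1|} < \infty$ yields the continuous embedding $H^{-s}(\R^2) \hookrightarrow B_{2,1}^1(\R^2)$, so \eqref{eq:B2inf_conv} extends to every $\varphi \in H^{-s}$. Choosing $\ep_1 = N^{-1}$ and $\ep_2 = N^{-\alpha}$ for any $\alpha \in (1/(2|s+1|), 1)$, taking the supremum over unit-norm $\varphi$, and dividing by $N$ (using $\Fr_N = N^2 \Fr_N^{avg}$) produces the stated bound. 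The weak-* convergence \eqref{eq:M_conv} then follows because the empirical probability measures are uniformly bounded in total variation, so $H^s$-convergence for any fixed $s < -1$ suffices to test against $C_c^\infty$, which by density gives weak-* convergence in $\M(\R^2)$.

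The main technical delicacy is the frequency splitting in the smearing defect: the gradient Bernstein bound $\|\nabla P_k \varphi\|_{L^\infty} \lesssim 2^{2k}\|P_k\varphi\|_{L^2}$ is one power of $2^k$ heavier than the $B_{2,1}^1$ norm, so one cannot sum over all $k$. It is exactly the cutoff $2^k \leq \ep_2^{-1}$ that absorbs this excess power and exports the error as the factor $\ep_1/\ep_2$. Beyond that, the proof is a clean interplay of integration by parts, Cauchy--Schwarz, and the coercivity furnished by \cref{cor:grad_H}.
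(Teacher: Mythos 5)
Your proof of \eqref{eq:B2inf_conv} is essentially correct and follows the decomposition-by-smearing plus Littlewood--Paley strategy one would expect from the cited source and the surrounding machinery in \cref{sec:CE}: split off the smeared discrepancy, integrate by parts against $-\Delta H$, Cauchy--Schwarz with the energy control from \cref{prop:CE}/\cref{cor:grad_H}, and bound the smearing defect by a frequency cutoff at $\ep_2^{-1}$ with Bernstein on each side. One citation imprecision: you smear each $\d_{x_i}$ by the constant radius $\ep_1$ and then invoke \cref{cor:grad_H}, but that corollary controls $\|\nabla H_{N,\ur_{N,\ep_1}}^{\mu,\ux_N}\|_{L^2}$ for the min-distance truncated radii $r_{i,\ep_1}=\min\{\frac14\min_{j\neq i}|x_i-x_j|,\ep_1\}$, not constant $\ep_1$. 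Either smear by $r_{i,\ep_1}$ throughout (all your defect estimates persist since $r_{i,\ep_1}\leq\ep_1$), or appeal instead to \cref{prop:CE}, inequality \eqref{eq:EN_renorm_bnd}, whose left side is nonnegative and hence directly gives the required $L^2$ bound on $\nabla H_{N,(\ep_1,\ldots,\ep_1)}^{\mu,\ux_N}$.

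The genuine gap is in the passage from \eqref{eq:B2inf_conv} to \eqref{eq:Hs_conv}. After dividing by $N$ and testing over unit $H^{-s}$ functions, you need both $\ep_1/\ep_2 \lesssim N^{-1/2}$ and $\ep_2^{|1+s|}\lesssim N^{-1/2}$ (the latter from the tail sum $\sum_{k\geq|\log_2\ep_2|}2^k\|P_k\varphi\|_{L^2}\lesssim_s\ep_2^{|1+s|}\|\varphi\|_{H^{-s}}$). With your fixed choice $\ep_1=N^{-1}$ and $\ep_2=N^{-\alpha}$, these force $\alpha\leq 1/2$ and $\alpha\geq 1/(2|1+s|)$, which are compatible only when $|1+s|\geq 1$, i.e.\ $s\leq -2$. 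Your stated interval $\alpha\in(1/(2|s+1|),1)$ is in fact empty for $s\in(-3/2,-1)$, and even where nonempty it does not enforce $\alpha\leq 1/2$, so the first error term degrades to $N^{\alpha-1}>N^{-1/2}$. The remedy is to let $\ep_1$ scale with $s$ too: take
\begin{equation*}
\gamma \coloneqq \max\Big\{1,\ \tfrac12+\tfrac{1}{2|1+s|}\Big\},\qquad \ep_1 = N^{-\gamma},\qquad \ep_2=N^{-(\gamma-\frac12)}.
\end{equation*}
Then $\ep_1<\ep_2\ll 1$, $\ep_1/\ep_2=N^{-1/2}$, $\ep_2^{|1+s|}\leq N^{-1/2}$, $N^{-1}|\ln\ep_1|=\gamma N^{-1}\ln N$ with $\gamma$ absorbed into the $s$-dependent constant, and $\ep_1^{2(p-1)/p}\leq N^{-1}$ since $\gamma\geq 1$ and $p>2$. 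The remaining ingredients --- the embedding $H^{-s}(\R^2)\hookrightarrow B_{2,1}^1(\R^2)$ for $s<-1$ by Cauchy--Schwarz on dyadic blocks, and the weak-* convergence \eqref{eq:M_conv} via density of $C_c^\infty$ in $C_0$ and the uniform total-variation bound on the empirical measures --- are correct.
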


\section{Key Propositions}
\label{sec:kprop}
This section is devoted to the proofs of the following propositions, which are the workhorses of this article.

\begin{restatable}{prop}{FOLkprop}
\label{prop:FOLkprop}
Assume that $\mu\in \P(\R^2)\cap L^p(\R^2)$ for some $2<p\leq\infty$. Then for any Lipschitz vector field $v:\R^2\rightarrow\R^2$ and vector $\ux_N\in (\R^2)^N\setminus\D_N$, we have the estimate
\begin{equation}
\label{eq:kprop_FOL}
\begin{split}
&\frac{1}{N^2}\left|\int_{(\R^2)^2\setminus\D_2}\nabla\g(x-y)\cdot\paren*{v(x)-v(y)}d(N\mu-\sum_{i=1}^N\d_{x_i})(x)d(N\mu-\sum_{i=1}^N\d_{x_i})(y) \right| \\
&\lesssim \|\nabla v\|_{L^\infty(\R^2)}\paren*{\Fr_N^{avg}(\ux_N,\mu) + \frac{|\ln\ep_3|}{N} + C_p\|\mu\|_{L^p(\R^2)}\ep_3^{\frac{2(p-1)}{p}}+ \ep_1 \paren*{C_p\|\mu\|_{L^p(\R^2)}^{\frac{p}{2(p-1)}} +\ep_3^{-1}}}.
\end{split}
\end{equation}
for all $(\ep_1,\ep_3)\in (\R_+)^2$ satisfying $0<4\ep_1<\ep_3\ll 1$. Here, $C_p$ is a constant depending only on $p$.
\end{restatable}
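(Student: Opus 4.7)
Proposal:

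My plan follows the modulated-energy framework of Duerinckx and Serfaty as implemented in \cite{Rosenzweig2020_PVMF}. I would smear each Dirac mass $\delta_{x_i}$ to the surface measure $\delta_{x_i}^{(r_i)}$ so that the smeared discrepancy $\nu := N\mu - \sum_i \delta_{x_i}^{(r_i)}$ coincides distributionally with $\Delta H$ for $H := H_{N,\ur}^{\mu,\ux_N}$, and then exploit the symmetry of the integrand $f(x,y) := \nabla\g(x-y)\cdot(v(x)-v(y))$ in $(x,y)$ (inherited from the antisymmetry of $\nabla\g$) together with integration by parts to rewrite the smeared double integral as a stress-tensor quadratic form in $\nabla H$.

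Concretely, taking truncation radii $\ur = \ur_{N,\ep_3}$ so that the smeared supports are pairwise disjoint, the symmetrization $\int f\,d\nu\,d\nu = -2\int v\cdot\nabla H\,d\nu$, followed by integration by parts twice against $d\nu = \Delta H\,dx$, yields the stress-tensor identity
\begin{equation*}
\int_{(\R^2)^2} f(x,y) \, d\nu(x) \, d\nu(y) = 2 \int_{\R^2} (\nabla v)^T \nabla H \cdot \nabla H \, dx - \int_{\R^2} (\nabla \cdot v)|\nabla H|^2 \, dx,
\end{equation*}
whose absolute value is bounded by $C \|\nabla v\|_{L^\infty} \|\nabla H\|_{L^2}^2$. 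Corollary \ref{cor:grad_H} then gives $\|\nabla H\|_{L^2}^2 \leq \Fr_N(\ux_N,\mu) + N \tilde\g(\ep_3) + C_p N^2 \|\mu\|_{L^p(\R^2)} \ep_3^{2(p-1)/p}$, which after division by $N^2$ accounts for the $\Fr_N^{avg}$, $|\ln \ep_3|/N$, and $C_p\|\mu\|_{L^p}\ep_3^{2(p-1)/p}$ contributions in \eqref{eq:kprop_FOL}.

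The remaining task is to estimate the discrepancy between this smeared integral and the original integral over $(\R^2)^2 \setminus \D_2$ against $(N\mu - \sum_i \delta_{x_i})^{\otimes 2}$. Writing $E_i := \delta_{x_i} - \delta_{x_i}^{(r_i)}$, a signed measure of zero total mass supported in $\overline{B(x_i, r_i)}$, the discrepancy decomposes into cross terms $\int f\,dE_i\,d\nu$, pair terms $\int f\,dE_i\,dE_j$, and diagonal renormalization contributions. The crux is that $f$ is globally bounded by $\|\nabla v\|_{L^\infty}/(2\pi)$ (the singularity of $\nabla\g$ cancelling the vanishing of $v(x)-v(y) = O(|x-y|)$), while off-diagonal it satisfies $|\nabla_x f|,|\nabla_y f| \lesssim \|\nabla v\|_{L^\infty}/|x-y|$. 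Combining these with the zero-mean property of $E_i$, the Lipschitz bound $\|\nabla(\g \ast \mu)\|_{L^\infty} \lesssim_p \|\mu\|_{L^p}^{p/(2(p-1))}$ from Lemma \ref{lem:PE_bnds}, and an auxiliary thinning scale $\ep_1 < \ep_3/4$ (at which the off-diagonal Lipschitz control of $f$ is applied, rather than at the coarser scale $\ep_3$), these discrepancy terms are bounded by $\|\nabla v\|_{L^\infty}\, \ep_1 \bigl(C_p \|\mu\|_{L^p}^{p/(2(p-1))} + \ep_3^{-1}\bigr)$, matching the final contribution in \eqref{eq:kprop_FOL}.

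The main obstacle will be the careful bookkeeping of the smearing errors. One must handle the diagonal self-interactions of the smeared masses (absent in the original integral restricted to $(\R^2)^2 \setminus \D_2$) through the renormalization identity of Proposition \ref{prop:CE}, and exploit the zero-mean property of both $E_i$ and $E_j$ simultaneously in the double-discrete pair terms in order to extract the requisite $O(\ep_1/\ep_3)$ per-pair cancellation from the off-diagonal Lipschitz bound on $f$. The constraint $4\ep_1 < \ep_3$ is what keeps the auxiliary thinning at scale $\ep_1$ compatible with the disjoint-support truncation $\ur_{N,\ep_3}$ used in the stress-tensor estimate.
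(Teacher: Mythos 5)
Your stress-tensor approach to the smeared quadratic form is a valid, more elementary route than the paper's, and is essentially Serfaty's original argument from \cite{Serfaty2020}: by the antisymmetry of $\nabla\g$ and $d\nu = \Delta H\,dx$, the symmetrized integral becomes $-2\int v\cdot\nabla H\,\Delta H$, and two integrations by parts give $2\int(\nabla v)^T\nabla H\cdot\nabla H - \int(\nabla\cdot v)|\nabla H|^2$, bounded by $\|\nabla v\|_{L^\infty}\|\nabla H\|_{L^2}^2$. The paper instead proves $|\mathrm{Term}_1|\lesssim\|\nabla v\|_{L^\infty}\|\nabla H\|_{L^2}^2$ via the $L^2$-boundedness of $\nabla T_{1,v}\nabla$ from Christ--Journ\'e theory (\cref{prop:T_1v}). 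The paper's choice is deliberate: the stress-tensor identity has no analogue for the second-order kernel $K_{2,v}$, so using Christ--Journ\'e for both \cref{prop:FOLkprop} and \cref{prop:SOkprop} gives a uniform harmonic-analysis framework. For this proposition alone your approach is the more elementary one, and either is correct.

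There are, however, two concrete issues with your handling of the discrepancy terms. First, your scales are internally inconsistent. You smear at $\ur_{N,\ep_3}$ for the stress-tensor estimate but then claim the discrepancy is $O(\ep_1)$ by invoking a separate ``thinning scale'' $\ep_1$. The smearing radius has to be fixed once and used in both pieces. What actually works (as in the paper) is to smear at $\ur_{N,\ep_1}$ throughout, and use $\ep_3$ only as the threshold in the far/close pair split; the far-pair mean-value argument then yields a per-pair gain of order $\eta_j/\ep_3 \leq \ep_1/\ep_3$ (this is exactly what the hypothesis $4\ep_1 < \ep_3$ is for), and \cref{cor:grad_H} is applied at scale $\ep_1$. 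With your choice the discrepancy would carry a factor $\ep_3$, not $\ep_1$, and you would not recover the stated estimate. Second, your bound $\|\nabla v\|_{L^\infty}\,\ep_1(C_p\|\mu\|_{L^p}^{p/(2(p-1))} + \ep_3^{-1})$ for the discrepancy omits the close-pair contribution. For pairs with $|x_i-x_j| < \ep_3$ the off-diagonal gradient bound on $K_{1,v}$ is useless and you can only use the global $L^\infty$ bound on $K_{1,v}$; the resulting count of such pairs is controlled via \cref{lem:count}, which contributes an additional $\Fr_N(\ux_N,\mu) + N\tl{\g}(\ep_3) + C_pN^2\|\mu\|_{L^p}\ep_3^{2(p-1)/p}$ to the discrepancy on top of what the stress-tensor term gives. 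You gesture at ``diagonal self-interactions'' and cite \cref{prop:CE}, but the relevant tool here is the counting lemma, not the renormalization identity, and the close-pair terms affect all of the pair and cross terms, not just the literal diagonal $i=j$.
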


\begin{restatable}{prop}{FOLLkprop}
\label{prop:FOLLkprop}
Assume that $\mu\in \P(\R^2)\cap L^p(\R^2)$ for some $2<p\leq\infty$. Then for any log-Lipschitz vector field $v:\R^2\rightarrow\R^2$ and vector $\ux_N\in (\R^2)^N\setminus\D_N$, we have the estimate
\begin{equation}
\label{eq:kprop_FOLL}
\begin{split}
&\frac{1}{N^2}\left|\int_{(\R^2)^2\setminus\D_2}\nabla\g(x-y)\cdot\paren*{v(x)-v(y)}d(N\mu-\sum_{i=1}^N\d_{x_i})(x)d(N\mu-\sum_{i=1}^N\d_{x_i})(y) \right| \\
&\lesssim |\ln\ep_3|\|v\|_{LL(\R^2)}\paren*{\Fr_N^{avg}(\ux_N,\mu) + \frac{|\ln \ep_3|}{N} + C_p\|\mu\|_{L^p(\R^2)}\ep_3^{\frac{2(p-1)}{p}}} \\
&\ph + \|v\|_{LL(\R^2)}\ep_2|\ln\ep_2|\paren*{\ep_3^{-1} + C_p\|\mu\|_{L^p(\R^2)}^{\frac{p}{2(p-1)}}}
\end{split}
\end{equation}
for all $(\ep_2,\ep_3)\in (\R_+)^3$ satisfying $0<\ep_2<\ep_3\ll 1$. Here, $C_p$ is a constant depending only on $p$.
\end{restatable}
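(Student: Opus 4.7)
\textbf{Plan for Proposition \ref{prop:FOLLkprop}.} The natural approach is to mimic the proof of Proposition \ref{prop:FOLkprop}, the new ingredient being a regularization of $v$ that turns its log-Lipschitz modulus into a scale-dependent Lipschitz constant. Fix a standard radial mollifier $\chi \in C_c^\infty(\R^2)$, set $\chi_{\ep_2}(x) \coloneqq \ep_2^{-2}\chi(x/\ep_2)$, and define $v_{\ep_2} \coloneqq v \ast \chi_{\ep_2}$. The log-Lipschitz hypothesis yields the classical bounds
\begin{equation*}
\|\nabla v_{\ep_2}\|_{L^\infty(\R^2)} \lesssim \|v\|_{LL(\R^2)}|\ln\ep_2|, \qquad \|v - v_{\ep_2}\|_{L^\infty(\R^2)} \lesssim \|v\|_{LL(\R^2)}\ep_2|\ln\ep_2|.
\end{equation*}
Denote the left-hand side of \eqref{eq:kprop_FOLL} by $\mathcal{I}$ and split $\mathcal{I} = \mathcal{I}_1 + \mathcal{I}_2$ according to $v = v_{\ep_2} + (v - v_{\ep_2})$.

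For $\mathcal{I}_1$, apply Proposition \ref{prop:FOLkprop} to $v_{\ep_2}$ with the free parameter of that proposition set to $\ep_2$. The bound \eqref{eq:kprop_FOL} applies, with leading factor $\|\nabla v_{\ep_2}\|_{L^\infty} \lesssim \|v\|_{LL}|\ln\ep_2|$ multiplying the parenthesized quantity. The trailing $\ep_1 (C_p\|\mu\|_{L^p}^{p/(2(p-1))} + \ep_3^{-1})$ piece with $\ep_1 = \ep_2$ already matches the second line of \eqref{eq:kprop_FOLL} up to the $|\ln\ep_2|$ prefactor, which is harmless there.

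For $\mathcal{I}_2$, set $w \coloneqq v - v_{\ep_2}$ and use the antisymmetry $\nabla\g(y-x) = -\nabla\g(x-y)$ to recast
\begin{equation*}
\mathcal{I}_2 = \frac{2}{N^2}\int_{(\R^2)^2\setminus\D_2}\nabla\g(x-y)\cdot w(x)\, d\mu_N(x)d\mu_N(y), \qquad \mu_N \coloneqq N\mu - \sum_{i=1}^N\d_{x_i}.
\end{equation*}
Expanding $\mu_N$: the continuous-continuous and mixed contributions collapse via Fubini to integrals of $(\nabla\g \ast \mu)\cdot w$, controlled by Lemma \ref{lem:PE_bnds} as $\|w\|_{L^\infty}C_p\|\mu\|_{L^p}^{p/(2(p-1))}$. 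The purely empirical piece $\frac{1}{N^2}\sum_{i\neq j}\nabla\g(x_i-x_j)\cdot w(x_i)$ is split at the threshold $\ep_3$: well-separated pairs are majorized using $|\nabla\g(x_i-x_j)|\leq (2\pi\ep_3)^{-1}$ to produce a $\|w\|_{L^\infty}\ep_3^{-1}$ contribution, while close pairs are folded into the discussion below. Together these contributions reproduce the second line of \eqref{eq:kprop_FOLL}.

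The main technical obstacle is recovering the sharp prefactor $|\ln\ep_3|$ on the parenthesized term in \eqref{eq:kprop_FOLL}, rather than the crude $|\ln\ep_2|$ coming from $\|\nabla v_{\ep_2}\|_{L^\infty}$; since $\ep_2<\ep_3$ one has $|\ln\ep_2|\geq|\ln\ep_3|$, so this is a genuine improvement. The decisive observation is that the $\|\nabla v\|_{L^\infty}$ bound in Proposition \ref{prop:FOLkprop} is genuinely needed only on pairs at distance $\leq \ep_3$; for well-separated pairs, the log-Lipschitz modulus of $v$ at scale $\ep_3$ is already $\|v\|_{LL}|\ln\ep_3|$, which is the desired prefactor. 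For close pairs one bypasses the mollified gradient entirely and uses the log-Lipschitz bound on $v$ directly, obtaining $|v(x_i)-v(x_j)|\lesssim\|v\|_{LL}|x_i-x_j||\ln\ep_3|$, which paired against $|\nabla\g(x_i-x_j)|\lesssim|x_i-x_j|^{-1}$ yields $\|v\|_{LL}|\ln\ep_3|$ per close pair. Lemma \ref{lem:count} then turns the number of close pairs into the combination $\Fr_N^{avg}+|\ln\ep_3|/N+C_p\|\mu\|_{L^p}\ep_3^{2(p-1)/p}$, producing the first line of \eqref{eq:kprop_FOLL}. The delicate step is the careful bookkeeping of the two-scale structure $\ep_2 < \ep_3$ through the various far-field, close-field, and discrete contributions so that each $|\ln\ep_2|$ factor gets paired with a corresponding $\ep_2$ from the $L^\infty$ remainder.
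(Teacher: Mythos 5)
Your proposal follows essentially the same route as the paper's proof: mollify $v$ at scale $\ep_2$ using the convolution $v_{\ep_2}=\chi_{\ep_2}\ast v$ (exactly as in \eqref{eq:chi_v_ep}), exploit the standard bounds $\|\nabla v_{\ep_2}\|_{L^\infty}\lesssim\|v\|_{LL}|\ln\ep_2|$ and $\|v-v_{\ep_2}\|_{L^\infty}\lesssim\|v\|_{LL}\ep_2|\ln\ep_2|$ from \cref{lem:conv_bnds}, split the bilinear form via $v=v_{\ep_2}+(v-v_{\ep_2})$, and then estimate the mollified piece via \cref{prop:FOLkprop} and the remainder directly. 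The only structural difference is that the paper disposes of the remainder by invoking a pre-established estimate, \cref{lem:kprop_err} (imported from \cite{Rosenzweig2020_PVMF}), whereas you sketch a direct treatment using the antisymmetry of $\nabla\g$ and a close/far split at scale $\ep_3$ together with \cref{lem:count}; that direct route is essentially what is packaged in that lemma, so the two are not really different in substance.

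On the $|\ln\ep_2|$ versus $|\ln\ep_3|$ prefactor: the fix you sketch is not sound. The $\|\nabla v_{\ep_2}\|_{L^\infty}$ factor in \cref{prop:FOLkprop} does not come only from the discrete close-pair terms (where a pointwise log-Lipschitz bound on $v$ at scale $\ep_3$ would indeed suffice); it enters through the operator-norm estimate $\|\nabla T_{1,v_{\ep_2}}\nabla\|_{2,2}\lesssim\|\nabla v_{\ep_2}\|_{L^\infty}$ of \cref{prop:T_1v}, applied to the smeared continuous-continuous part $\mathrm{Term}_1$ in the proof of \cref{prop:FOLkprop}. That bound genuinely requires a global Lipschitz constant for the mollified field, and so necessarily produces $|\ln\ep_2|$, not $|\ln\ep_3|$. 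Restricting attention to well-separated pairs cannot recover the sharper prefactor there, because $\mathrm{Term}_1$ involves no pairs at all. You should not feel obligated to chase this improvement: in the paper's own estimate \eqref{eq:FOLL_T1_fin} the mollified contribution already carries $|\ln\ep_2|$, and in the application of this proposition in \cref{sec:MR} the parameters are tied by $\ep_2|\ln\ep_2|=\ep_3^2$, which forces $|\ln\ep_2|\sim|\ln\ep_3|$ up to absolute constants, so the distinction is immaterial. Two minor points: when applying \cref{prop:FOLkprop} with $\ep_1=\ep_2$ you need $4\ep_2<\ep_3$, slightly stronger than the stated $\ep_2<\ep_3$; and your analysis of $\mathcal{I}_2$ should be carried out, as in \cref{lem:kprop_err}, against the full signed measure $N\mu-\sum_i\d_{x_i}$ in both slots rather than extracting just the empirical-empirical block and collapsing the rest to $(\nabla\g\ast\mu)\cdot w$, since the latter gives a bound only in terms of $\|\mu\|_{L^p}$ and loses the $\Fr_N$-dependence on the first line of the target estimate.
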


\begin{restatable}{prop}{SOkprop}
\label{prop:SOkprop}
Assume that $\mu\in \P(\R^2)\cap L^p(\R^2)$ for some $2<p\leq\infty$. Then for any Lipschitz vector field $v:\R^2\rightarrow\R^2$ and vector $\ux_N\in (\R^2)^N\setminus\D_N$, we have the estimate
\begin{equation}
\label{eq:kprop_SO}
\begin{split}
&\frac{1}{N^2}\left|\int_{(\R^2)^2\setminus \D_2} \nabla^2\g(x-y) : (v(x)-v(y))^{\otimes 2}d(N\mu-\sum_{i=1}^N\d_{x_i})(x)d(N\mu-\sum_{i=1}^N\d_{x_i})(y) \right| \\
&\lesssim \|\nabla v\|_{L^\infty(\R^2)}^2\paren*{\Fr_N^{avg}(\ux_N,\mu) + \frac{|\ln \ep_3|}{N}+ C_p\|\mu\|_{L^p(\R^2)}\ep_3^{\frac{2(p-1)}{p}} +\ep_1\paren*{C_p\|\mu\|_{L^p(\R^2)}^{\frac{p}{2(p-1)}} + \ep_3^{-1}}}
\end{split}
\end{equation}
for all $(\ep_1,\ep_3)\in (\R_+)^2$ satisfying $0<4\ep_1<\ep_3\ll 1$. Here, $C_p$ is a constant depending on $p$.
\end{restatable}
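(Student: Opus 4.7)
The plan is to mimic the strategy of \cref{prop:FOLkprop}, with the new ingredient being that the singular bilinear form with kernel $K(x,y) \coloneqq \nabla^2\g(x-y):(v(x)-v(y))^{\otimes 2}$ is controlled, after integration by parts, by a Calder\'on commutator operator rather than a Riesz-potential/$L^\infty$ bound. First, I would smear the point masses to scale $\eta_i \coloneqq r_{i,\ep_3}$ from \cref{cor:grad_H}, thereby writing
\begin{equation}
N\mu - \sum_{i=1}^N \d_{x_i} = f_N + g_N, \qquad f_N \coloneqq N\mu - \sum_{i=1}^N \d_{x_i}^{(\eta_i)}, \qquad g_N \coloneqq \sum_{i=1}^N \paren*{\d_{x_i}^{(\eta_i)} - \d_{x_i}},
\end{equation}
and expand the bilinear form as a main term $\int K\, df_N\otimes df_N$ plus cross/self error terms involving $g_N$.

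For the main term, the key identity, verified by two integrations by parts exactly as in equation \eqref{eq:intro_Tsig}, reads
\begin{equation}
\int_{(\R^2)^2} K(x,y)\, df_N(x)\, df_N(y) = \int_{\R^2} (\nabla H)(x) \cdot \nabla T_v \nabla (\nabla H)(x)\, dx,
\end{equation}
where $H \coloneqq H_{N,\ueta_N}^{\mu,\ux_N}$ (using $\nabla\g \ast f_N = -\nabla H$) and $T_v$ is the Calder\'on-type operator defined in the introduction. Cauchy-Schwarz together with the Christ-Journ\'e operator bound $\|\nabla T_v \nabla\|_{L^2 \to L^2} \lesssim \|\nabla v\|_{L^\infty}^2$ (deferred to \cref{sec:app}) gives
\begin{equation}
\left|\int K\, df_N\, df_N\right| \lesssim \|\nabla v\|_{L^\infty}^2 \|\nabla H\|_{L^2}^2,
\end{equation}
and an application of \cref{cor:grad_H} with parameter $\ep_3$ produces the first three terms on the right-hand side of \eqref{eq:kprop_SO} after dividing by $N^2$.

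It remains to bound the smearing errors $2\int K\, df_N\, dg_N$ and $\int_{(\R^2)^2 \setminus \D_2} K\, dg_N\, dg_N$. Crucially, the Lipschitz regularity of $v$ gives the pointwise bound $|K(x,y)| \lesssim \|\nabla v\|_{L^\infty}^2$ for $x \neq y$, so on the support of $g_N \otimes g_N$ restricted to pairs $i \neq j$ (where $|x-y| \geq \tfrac{1}{2}|x_i - x_j|$ thanks to $\eta_i,\eta_j \leq \tfrac14 \min_{k\neq i}|x_i-x_k|$) the kernel is bounded. The diagonal-$i$ contributions are handled by direct estimates against the smearing measures on $\p B(x_i,\eta_i)$, while off-diagonal pair counts are controlled via \cref{lem:count} at the auxiliary scale $\ep_1$, separating pairs at distance $\leq \ep_1$ (treated absolutely) from pairs at distance $\geq \ep_1$ (treated via the Lipschitz cancellation, which yields a factor $\eta_i/|x_i-x_j|$ of improvement). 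The cross term $\int K\, df_N\, dg_N$ is decomposed into the $N\mu$ piece (use Lipschitz regularity of $v$ and boundedness of $K$ to gain $\ep_1$-smallness via the zero-mean property of each $\d_{x_i}^{(\eta_i)} - \d_{x_i}$) and the $\sum \d_{x_j}^{(\eta_j)}$ piece (reduce to pair estimates as above). Tracking all contributions and invoking \cref{lem:count} yields error bounds of the form $\|\nabla v\|_{L^\infty}^2 \ep_1 \paren*{C_p \|\mu\|_{L^p}^{p/(2(p-1))} + \ep_3^{-1}}$, matching the final parenthesized piece in \eqref{eq:kprop_SO}.

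The main obstacle is \emph{not} the smearing error bookkeeping (which parallels the analogous step in the proof of \cref{prop:FOLkprop}) but rather the step 2 identification of the main term with a Calder\'on commutator and the associated $L^2$-boundedness of $\nabla T_v \nabla$: this is where the second-order singularity of $\nabla^2 \g$ is genuinely delicate, and without the multilinear Christ-Journ\'e machinery the naive bound of $K$ by $\|\nabla v\|_{L^\infty}^2$ is too crude to close the estimate at the scaling-critical $L^\infty$ level. Verifying the bound $\|\nabla T_v \nabla\|_{L^2 \to L^2} \lesssim \|\nabla v\|_{L^\infty}^2$, via the $T(1)$ theorem and the commutator structure, is thus the crux of the argument.
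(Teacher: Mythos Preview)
Your overall architecture is exactly the paper's: smear the Diracs, handle the main smeared--smeared term via the integration-by-parts identity and the Christ--Journ\'e $L^2$ bound for $\nabla T_{2,v}\nabla$ (this is indeed the crux, and your identification of it is correct), and dispose of the smearing errors by a close/far pair decomposition combined with \cref{lem:count} and a mean-value cancellation.

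However, you have interchanged the roles of $\ep_1$ and $\ep_3$, and with your assignment the error estimates do not close. You smear at scale $\eta_i = r_{i,\ep_3}$ (so $\eta_i\le\ep_3$) and then separate close/far pairs at the \emph{smaller} threshold $\ep_1$. For far pairs $|x_i-x_j|\ge\ep_1$, the mean-value cancellation yields a gain of order $\eta_j/|x_i-x_j|$; summing crudely gives $N\sum_j\eta_j/\ep_1 \le N^2\ep_3/\ep_1$, which after dividing by $N^2$ is $\ep_3/\ep_1>1$ --- the wrong way around to match the $\ep_1\ep_3^{-1}$ term in \eqref{eq:kprop_SO}. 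Likewise, the $N\mu$ cross term, where the zero-mean property of $\d_{x_i}^{(\eta_i)}-\d_{x_i}$ gains a factor $\eta_i$, produces $\ep_3\|\mu\|_{L^p}^{p/(2(p-1))}$ rather than the claimed $\ep_1\|\mu\|_{L^p}^{p/(2(p-1))}$.

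The fix is simply to swap the parameters back: smear at the \emph{smaller} scale $\eta_i=r_{i,\ep_1}\le\ep_1$ and perform the close/far split at the \emph{larger} threshold $\ep_3$. Then the far-pair gain is $\eta_j/\ep_3\le\ep_1/\ep_3\ll 1$, the close-pair count from \cref{lem:count} at scale $\ep_3$ gives the $|\ln\ep_3|/N$ and $\|\mu\|_{L^p}\ep_3^{2(p-1)/p}$ terms, and \cref{cor:grad_H} applied with parameter $\ep_1$ (using $4\ep_1<\ep_3$) controls $\|\nabla H_{N,\ur_{N,\ep_1}}^{\mu,\ux_N}\|_{L^2}^2$. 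This is precisely the paper's choice, and with it your sketch goes through.
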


A version of \cref{prop:FOLkprop} was proved by Serfaty \cite[Proposition 1.1]{Serfaty2020} and is the crucial ingredient for her proof of mean-field convergence of the deterministic point vortex model to the 2D Euler vorticity equation under the assumption that the limiting velocity field is spatially Lipschitz. Later, the author relaxed the Lipschitz assumption in this proposition--and therefore on the limiting velocity--by introducing a new mollification argument,\footnote{This mollification idea was inspired by earlier work \cite{Rosenzweig2019_LL} of the author on the mean-field limit of the 1D $\d$ Bose gas.} for which $\ep_2$ serves as the mollification parameter. \cref{prop:FOLLkprop} is a version of \cite[Proposition 1.6]{Rosenzweig2020_PVMF} in this prior work of the author. For a more extensive discussion of the original proofs behind these propositions, we refer the interested reader to \cite[Section 4.1]{Rosenzweig2020_PVMF} for a detailed overview of the main steps and comments on the challenges posed by non-Lipschitz vector fields. 

\cref{prop:SOkprop} is completely new to this work and, as explained in the introduction, is motivated by the second-order correction in the Stratonovich-to-It\^o conversion stemming from the nonzero quadratic variation of Brownian motion. The proof of this proposition is the main focus of this section and is given in \cref{ssec:kprop_SO} following the outline given in \cref{ssec:intro_RM} of the introduction to the article. We also present new, streamlined proofs of \Cref{prop:FOLkprop} and \Cref{prop:FOLLkprop} in \Cref{ssec:kpropFOL} and \Cref{ssec:kpropFOLL}, respectively, which reflect the harmonic-analysis perspective behind the proof of \cref{prop:SOkprop}. 

\subsection{Proof of \cref{prop:FOLkprop}}
\label{ssec:kpropFOL}
In this subsection, we prove \cref{prop:FOLkprop}. We start by introducing a parameter vector $\ue_N\in (\R_+)^N$ whose precise value shall be specified at the end. Out of convenience, we introduce the notation
\begin{equation}
T_{1,v}f(x) \coloneqq \int_{\R^2} K_{1,v}(x,y)f(y)dy, \qquad K_{1,v}(x,y)\coloneqq \nabla\g(x-y)\cdot \paren*{v(x)-v(y)}.
\end{equation}
Using this notation going forward, we decompose
\begin{equation}
\begin{split}
&\int_{(\R^2)^2\setminus\D_2} K_{1,v}(x,y) d(N\mu-\sum_{i=1}^N\d_{x_i})(x)d(N\mu-\sum_{i=1}^N\d_{x_i})(y) = \mathrm{Term}_1 + \mathrm{Term}_2 + \mathrm{Term}_3
\end{split}
\end{equation}
where
\begin{align}
\mathrm{Term}_1  &\coloneqq \int_{(\R^2)^2\setminus\D_2} K_{1,v}(x,y) d(N\mu-\sum_{i=1}^N\d_{x_i}^{(\et_i)})(x)d(N\mu-\sum_{i=1}^N\d_{x_i}^{(\et_i)})(y), \\
\mathrm{Term}_2 &\coloneqq -2\int_{(\R^2)^2\setminus\D_2} K_{1,v}(x,y) d(N\mu-\sum_{i=1}^N\d_{x_i}^{(\et_i)})(x)d(\sum_{i=1}^N\d_{x_i}-\d_{x_i}^{(\et_i)})(y), \\
\mathrm{Term}_3 &\coloneqq \sum_{1\leq i,j\leq N}\int_{(\R^2)^2\setminus\D_2}K_{1,v}(x,y) d(\d_{x_i}-\d_{x_i}^{(\et_i)})(x)d(\d_{x_j}-\d_{x_j}^{(\et_j)})(y).
\end{align}
Note that we implicitly used the symmetry of $K_{1,v}(x,y)$ under $x \leftrightarrow y$ in obtaining $\mathrm{Term}_2$. We now proceed to estimate $\mathrm{Term}_1$, $\mathrm{Term}_2$, and $\mathrm{Term}_3$ individually.

\begin{description}[leftmargin=*]
\item[Estimate for $\mathrm{Term}_1$]
Using the Fubini-Tonelli theorem, we see that
\begin{equation}
\label{eq:FOL_T1_RHS}
\mathrm{Term}_1 = \int_{\R^2} T_{1,v}(N\mu-\sum_{i=1}^N\d_{x_i}^{(\et_i)})(x)d(N\mu-\sum_{i=1}^N\d_{x_i}^{(\et_i)})(x),
\end{equation}
Integrating by parts once in both $x$ and $y$ and recalling the notation \eqref{eq:HN_trun_def}, we see that the right-hand side of \eqref{eq:FOL_T1_RHS} equals
\begin{equation}
\int_{\R^2} (\nabla H_{N,\ue_N}^{\mu,\ux_N})(x)\cdot (\nabla T_{1,v}\nabla)(\nabla H_{N,\ue_N}^{\mu,\ux_N})(x) dx.
\end{equation}
By \cref{prop:T_1v}, $\nabla T_{1,v}\nabla$ is bounded from $L^2(\R^2)$ to $L^2(\R^2;(\R^2)^2)$ with operator norm $\lesssim \|\nabla v\|_{L^\infty(\R^2)}$. Therefore, it follows from Cauchy-Schwarz that
\begin{equation}
\label{eq:FOL_T1_fin}
|\mathrm{Term}_1| \lesssim \|\nabla v\|_{L^\infty(\R^2)} \|\nabla H_{N,\ue_N}^{\mu,\ux_N}\|_{L^2(\R^2)}^2.
\end{equation}

\item[Estimate for $\mathrm{Term}_2$]
We first split
\begin{equation}
\begin{split}
-\mathrm{Term}_2 &= \underbrace{2N\sum_{i=1}^N\int_{(\R^2)^2} K_{1,v}(x,y)d\mu(x)d(\d_{x_i}-\d_{x_i}^{(\et_i)})(y)}_{\eqqcolon \mathrm{Term}_{2,1}} \\
&\ph -\underbrace{2\sum_{1\leq i, j\leq N} \int_{(\R^2)^2}K_{1,v}(x,y)d\d_{x_i}^{(\et_i)}(x)d(\d_{x_j}-\d_{x_j}^{(\et_j)})(y)}_{\eqqcolon\mathrm{Term}_{2,2}}.
\end{split}
\end{equation}
We now estimate $\mathrm{Term}_{2,1}$ and $\mathrm{Term}_{2,2}$ separately.

For $\mathrm{Term}_{2,1}$, we use the symmetry of $K_{1,v}(x,y)$ under $x\leftrightarrow y$ and Fubini-Tonelli to write
\begin{equation}
\mathrm{Term}_{2,1} = 2N\sum_{i=1}^N\int_{\R^2}(T_{1,v}\mu)(y)d(\d_{x_i}-\d_{x_i}^{(\et_i)})(y).
\end{equation}
Making a change of variable and using that $\d_{x_i}^{(\et_i)}$ is a probability measure, we see that for each $1\leq i\leq N$,
\begin{equation}
\int_{\R^2}(T_{1,v}\mu)(y)d(\d_{x_i}-\d_{x_i}^{(\et_i)})(y) = \int_{\R^2}\paren*{(T_{1,v}\mu)(x_i)-(T_v\mu)(x_i+\et_i y)}d\d_{0}^{(1)}(y).
\end{equation}
Since $T_{1,v}\mu\in W^{1,\infty}(\R^2)$ by \cref{lem:Linf_RP} with gradient bound
\begin{align}
\|\nabla T_{1,v}(\mu)\|_{L^\infty(\R^2)} \lesssim \|\nabla v\|_{L^\infty(\R^2)}\|\mathcal{I}_1(\mu)\|_{L^\infty(\R^2)} \lesssim_p \|\nabla v\|_{L^\infty(\R^2)}\|\mu\|_{L^p(\R^2)}^{\frac{p}{2(p-1)}},
\end{align}
we can apply the mean-value theorem to obtain
\begin{equation}
\left|(T_{1,v}\mu)(x_i)-(T_{1,v}\mu)(x_i+\et_i y)\right| \lesssim \et_i\|\nabla v\|_{L^\infty(\R^2)}\|\mu\|_{L^p(\R^2)}^{\frac{p}{2(p-1)}}, \qquad \forall y\in \supp(\d_0^{(1)}).
\end{equation}
It now follows after a little bookkeeping that
\begin{equation}
\label{eq:FOL_T21}
|\mathrm{Term}_{2,1}| \lesssim_p N\|\nabla v\|_{L^\infty(\R^2)}\|\mu\|_{L^p(\R^2)}^{\frac{p}{2(p-1)}}\sum_{i=1}^N \et_i.
\end{equation} 

For $\mathrm{Term}_{2,2}$, we split the sum over $1\leq i,j\leq N$ into ``close'' and ``far'' pairs of points:
\begin{equation}
\sum_{1\leq i, j\leq N} = \sum_{{1\leq i, j\leq N}\atop{|x_i-x_j|<\ep_3}} + \sum_{{1\leq i\neq j\leq N}\atop{|x_i-x_j|\geq \ep_3}},
\end{equation}
where $\ep_3$ is as in the statement of the proposition. For the sum over close pairs, we use the trivial bound
\begin{equation}
\label{eq:Kv_sup}
\sup_{{x,y\in\R^2}\atop{x\neq y}} |K_{1,v}(x,y)| \lesssim \min\{\|\nabla v\|_{L^\infty(\R^2)}, \frac{\|v\|_{L^\infty(\R^2)}}{|x-y|}\}
\end{equation}
to crudely estimate
\begin{equation}
\sum_{{1\leq i,j\leq N}\atop{|x_i-x_j|<\ep_3}} \left|\int_{(\R^2)^2} K_{1,v}(x,y)d\d_{x_i}^{(\et_i)}(x)d(\d_{x_j}-\d_{x_j}^{(\et_j)})(y)\right| \lesssim \|\nabla v\|_{L^\infty(\R^2)}|\{(i,j)\in\{1,\ldots,N\}^2 : |x_i-x_j| < \ep_3\}|
\end{equation}
and then apply \cref{lem:count} to the right-hand side to obtain
\begin{equation}
\label{eq:FOL_T22_cl}
\begin{split}
\sum_{{1\leq i, j\leq N}\atop{|x_i-x_j|<\ep_3}} \left|\int_{(\R^2)^2} K_{1,v}(x,y)d\d_{x_i}^{(\et_i)}(x)d(\d_{x_j}-\d_{x_j}^{(\et_j)})(y)\right| \lesssim \|\nabla v\|_{L^\infty(\R^2)}\paren*{\Fr_N(\ux_N,\mu) + N\tl{\g}(\ep_3) + C_pN^2 \ep_3^{\frac{2(p-1)}{p}} }.
\end{split}
\end{equation}
For the sum over far pairs, we first make a change of variable for $y$ to write
\begin{equation}
\int_{(\R^2)^2} K_{1,v}(x,y)d\d_{x_i}^{(\et_i)}(x)d(\d_{x_j}-\d_{x_j}^{(\et_j)})(y) = \int_{(\R^2)^2}\paren*{K_{1,v}(x,x_j)-K_v(x,x_j+\et_j y)}\d_{x_i}^{(\et_i)}(x)\d_{0}^{(1)}(y).
\end{equation}
Note that provided $\et_i\ll \ep_3$ for every $1\leq i\leq N$ and $|x_i-x_j|\geq \ep_3$, we may apply the mean-value theorem and (reverse) triangle inequality to obtain
\begin{align}
|K_{1,v}(x, x_j) - K_{1,v}(x, x_j + \et_j y)| &\leq |v(x)-v(x_j)|\left|\nabla\g(x-x_j)-\nabla\g(x-x_j-\et_j y)\right| \nn\\
&\ph + |\nabla\g(x-x_j-\et_j y)|\left|v(x_j+\et_j y)-v(x_j)\right| \nn\\
&\lesssim \frac{\|\nabla v\|_{L^\infty(\R^2)}\et_j}{\ep_3}
\end{align}
for every $(x,y)\in \supp(\d_{x_i}^{(\et_i)})\times\supp(\d_0^{(1)})$. Therefore,
\begin{equation}
\label{eq:FOL_T22_fr}
\sum_{{1\leq i\neq j\leq N}\atop{|x_i-x_j|\geq\ep_3}}\left|\int_{(\R^2)^2} K_{1,v}(x,y)d\d_{x_i}^{(\et_i)}(x)d(\d_{x_j}-\d_{x_j}^{(\et_j)})(y)\right| \lesssim N\frac{\|\nabla v\|_{L^\infty(\R^2)}}{\ep_3}\sum_{j=1}^N\et_j.
\end{equation}
Combining estimates \eqref{eq:FOL_T22_cl} and \eqref{eq:FOL_T22_fr}, we find that
\begin{equation}
\label{eq:FOL_T22}
|\mathrm{Term}_{2,2}| \lesssim \|\nabla v\|_{L^\infty(\R^2)}\paren*{\Fr_N(\ux_N,\mu) + N\tl{\g}(\ep_3) + C_pN^2 \ep_3^{\frac{2(p-1)}{p}} } + \frac{\|\nabla v\|_{L^\infty(\R^2)}N}{\ep_3}\sum_{j=1}^N\et_j.
\end{equation}

Now combining estimates \eqref{eq:FOL_T21} and \eqref{eq:FOL_T22}, we conclude that
\begin{equation}
\label{eq:FOL_T2_fin}
\begin{split}
|\mathrm{Term}_2| &\lesssim N\|\nabla v\|_{L^\infty(\R^2)}\paren*{C_p\|\mu\|_{L^p(\R^2)}^{\frac{p}{2(p-1)}} + \ep_3^{-1}}\sum_{i=1}^N \et_i \\
&\ph +\|\nabla v\|_{L^\infty(\R^2)}\paren*{\Fr_N(\ux_N,\mu) + N\tl{\g}(\ep_3) + C_pN^2 \ep_3^{\frac{2(p-1)}{p}} }.
\end{split}
\end{equation}

\item[Estimate for $\mathrm{Term}_3$]
As in our estimation of $\mathrm{Term}_2$, we decompose the sum over $i,j$ into sums over close and far vortex pairs with distance threshold $\ep_3$ to obtain
\begin{equation}
\mathrm{Term}_3 = \sum_{{1\leq i, j\leq N}\atop{|x_i-x_j|<\ep_3}}(\cdots) + \sum_{{1\leq i\neq j\leq N}\atop{|x_i-x_j|\geq\ep_3}}(\cdots) \eqqcolon \mathrm{Term}_{3,1} + \mathrm{Term}_{3,2}.
\end{equation}
We estimate $\mathrm{Term}_{3,1}$ and $\mathrm{Term}_{3,2}$ individually.

For $\mathrm{Term}_{3,1}$, we again use \eqref{eq:Kv_sup} to crudely estimate
\begin{equation}
\left|\int_{(\R^2)^2\setminus\D_2}K_{1,v}(x,y)d(\d_{x_i}-\d_{x_i}^{(\et_i)})(x)d(\d_{x_j}-\d_{x_j}^{(\et_j)})(y)\right| \lesssim \|\nabla v\|_{L^\infty(\R^2)}.
\end{equation}
So by \cref{lem:count},
\begin{align}
|\mathrm{Term}_{3,1}| &\lesssim \|\nabla v\|_{L^\infty(\R^2)}\sum_{{1\leq i, j\leq N}\atop{|x_i-x_j|<\ep_3}} 1 \nn\\
&\lesssim \|\nabla v\|_{L^\infty(\R^2)}\paren*{\Fr_N(\ux_N,\mu) + N\tl{\g}(\ep_3) + C_pN^2 \|\mu\|_{L^p(\R^2)} \ep_3^{\frac{2(p-1)}{p}}}. \label{eq:FOL_T31}
\end{align}
For $\mathrm{Term}_{3,2}$, the same mean-value-theorem reasoning used to obtain the estimate \eqref{eq:FOL_T22_fr} shows that
\begin{equation}
\label{eq:FOL_T32}
|\mathrm{Term}_{3,2}| \lesssim \sum_{{1\leq i\neq j\leq N}\atop{|x_i-x_j|\geq\ep_3}}\frac{\|\nabla v\|_{L^\infty(\R^2)}\et_j}{\ep_3} \leq  \frac{N\|\nabla v\|_{L^\infty(\R^2)}}{\ep_3}\sum_{j=1}^N \et_j
\end{equation} 

Now combining estimates \eqref{eq:FOL_T31} and \eqref{eq:FOL_T32}, we conclude that
\begin{equation}
\label{eq:FOL_T3_fin}
\begin{split}
|\mathrm{Term}_3| &\lesssim \frac{N\|\nabla v\|_{L^\infty(\R^2)}}{\ep_3}\sum_{j=1}^N \et_j  + \|\nabla v\|_{L^\infty(\R^2)}\paren*{\Fr_N(\ux_N,\mu) + N\tl{\g}(\ep_3) + C_pN^2 \|\mu\|_{L^p(\R^2)} \ep_3^{\frac{2(p-1)}{p}}}.
\end{split}
\end{equation}
\end{description}

Collecting our estimates \eqref{eq:FOL_T1_fin}, \eqref{eq:FOL_T2_fin}, and \eqref{eq:FOL_T3_fin} for $\mathrm{Term}_1$, $\mathrm{Term}_2$, and $\mathrm{Term}_3$, respectively, and simplifying, we find that there is a constant $C_p>0$ such that
\begin{equation}
\begin{split}
&\left|\int_{(\R^2)^2\setminus\D_2} K_{1,v}(x,y)d(N\mu-\sum_{i=1}^N\d_{x_i})(x)d(N\mu-\sum_{i=1}^N\d_{x_i})(y)\right| \\
&\lesssim \|\nabla v\|_{L^\infty(\R^2)} \|\nabla H_{N,\ue_N}^{\mu,\ux_N}\|_{L^2(\R^2)}^2  + NC_p\|\nabla v\|_{L^\infty(\R^2)}\|\mu\|_{L^p(\R^2)}^{\frac{p}{2(p-1)}}\sum_{i=1}^N \et_i \\
&\ph +\|\nabla v\|_{L^\infty(\R^2)}\paren*{\Fr_N(\ux_N,\mu) + N\tl{\g}(\ep_3) + C_pN^2\|\mu\|_{L^p(\R^2)} \ep_3^{\frac{2(p-1)}{p}} } + \frac{N\|\nabla v\|_{L^\infty(\R^2)}}{\ep_3}\sum_{j=1}^N\et_j .
\end{split}
\end{equation}
Choosing $\et_i = r_{i,\ep_1}\leq \ep_1$ for every $1\leq i\leq N$ and applying estimate \eqref{eq:grad_H_r_bnd} of \cref{cor:grad_H} together with the assumption $4\ep_1<\ep_3$, the preceding right-hand side is $\lesssim$
\begin{equation}
\begin{split}
&\|\nabla v\|_{L^\infty(\R^2)}\paren*{\Fr_N(\ux_N,\mu) + N\tl{\g}(\ep_3) + C_pN^2\|\mu\|_{L^p(\R^2)}\ep_3^{\frac{2(p-1)}{p}} + N^2\ep_1 \paren*{C_p\|\mu\|_{L^p(\R^2)}^{\frac{p}{2(p-1)}} +\ep_3^{-1}}}.
\end{split}
\end{equation}
Comparing this expression to the statement of \cref{prop:FOLkprop}, we see that the proof is complete.

\subsection{Proof of \cref{prop:FOLLkprop}}
\label{ssec:kpropFOLL}
In this subsection, we combine \cref{prop:FOLkprop} with the mollification argument from \cite{Rosenzweig2020_PVMF} in order to prove \cref{prop:FOLLkprop}. To this end, let $\chi\in C_c^\infty(\R^n)$ be a radial, nonincreasing bump function satisfying
\begin{equation}
\label{eq:chi}
\int_{\R^n}\chi(x) dx=1, \quad 0\leq \chi\leq 1, \quad \chi(x) = \begin{cases} 1, & {|x|\leq \frac{1}{4}} \\ 0, & {|x|>1} \end{cases}.
\end{equation}
For $\ep>0$, set
\begin{equation}
\label{eq:chi_v_ep}
\chi_\ep(x)\coloneqq \ep^{-2}\chi(x/\ep) \qquad \text{and} \qquad  v_\ep(x) \coloneqq (\chi_\ep\ast v)(x),
\end{equation}
where the convolution $\chi_{\ep}\ast v$ is performed component-wise. Evidently, $v_\vep$ is $C^\infty(\R^n;\R^n)$.
\begin{lemma}[{\cite[Lemma 2.11]{Rosenzweig2020_PVMF}}]
\label{lem:conv_bnds}
For $0<\ep\ll 1$ and $\mu\in LL(\R^n)$, we have the estimates
\begin{align}
\|\mu_\ep\|_{L^\infty(\R^n)} &\leq \|\mu\|_{L^\infty(\R^n)}, \label{eq:v_Linf} \\ 
\|\mu-\mu_\ep\|_{L^\infty(\R^n)} &\leq \|\mu\|_{LL(\R^n)}\ep|\ln\ep|, \label{eq:v_diff_Linf}\\
\|\nabla\mu_\ep\|_{L^\infty(\R^n)} &\lesssim_n \|\mu\|_{LL(\R^n)}|\ln\ep|. \label{eq:v_grad_Linf}
\end{align}
\end{lemma}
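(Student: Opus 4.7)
The three claims are standard mollification estimates, and my plan is to treat them in order, each time using the symmetry/normalization of $\chi$ together with the fact that the log-Lipschitz modulus $r\mapsto r|\ln r|$ is monotonically increasing on $[0,e^{-1}]$.

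For estimate \eqref{eq:v_Linf}, Young's convolution inequality gives
\begin{equation}
\|\mu_\ep\|_{L^\infty(\R^n)} \leq \|\chi_\ep\|_{L^1(\R^n)} \|\mu\|_{L^\infty(\R^n)},
\end{equation}
and since $\chi_\ep\geq 0$ with $\int \chi_\ep = 1$ by construction, $\|\chi_\ep\|_{L^1(\R^n)} = 1$. (If $\mu\notin L^\infty$ the inequality is vacuous.)

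For estimate \eqref{eq:v_diff_Linf}, I would use $\int \chi_\ep = 1$ to write
\begin{equation}
\mu_\ep(x) - \mu(x) = \int_{\R^n} \chi_\ep(y)\paren*{\mu(x-y)-\mu(x)}dy.
\end{equation}
Because $\chi_\ep$ is supported in $B(0,\ep)$ and $0<\ep\ll 1$ (in particular $\ep\leq e^{-1}$), the definition of $\|\mu\|_{LL(\R^n)}$ applies pointwise on the support to give $|\mu(x-y)-\mu(x)|\leq \|\mu\|_{LL(\R^n)} |y||\ln|y||$. Since $r\mapsto r|\ln r|$ is monotonically increasing on $[0,e^{-1}]$ (its derivative is $-\ln r - 1 \geq 0$ for $r\leq e^{-1}$), we have $|y||\ln|y||\leq \ep|\ln\ep|$ on $B(0,\ep)$, and then integrating against the probability density $\chi_\ep$ gives \eqref{eq:v_diff_Linf}.

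For estimate \eqref{eq:v_grad_Linf}, the key observation is the mean-zero trick: $\int_{\R^n}\nabla\chi_\ep(y)dy = 0$ since $\chi_\ep$ has compact support, which lets me subtract $\mu(x)$ under the integral to get
\begin{equation}
\nabla \mu_\ep(x) = \int_{\R^n}(\nabla\chi_\ep)(x-y)\paren*{\mu(y)-\mu(x)}dy.
\end{equation}
By the same log-Lipschitz estimate and monotonicity argument as above, $|\mu(y)-\mu(x)|\leq \|\mu\|_{LL(\R^n)}\ep|\ln\ep|$ on $|x-y|\leq \ep$, while scaling gives $\|\nabla\chi_\ep\|_{L^1(\R^n)} = \ep^{-1}\|\nabla\chi\|_{L^1(\R^n)}\lesssim_n \ep^{-1}$. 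Combining these yields $\|\nabla\mu_\ep\|_{L^\infty(\R^n)}\lesssim_n \|\mu\|_{LL(\R^n)}|\ln\ep|$. None of the steps should present any real obstacle; the only care required is ensuring $\ep\leq e^{-1}$ so that the monotonicity of the modulus of continuity can be invoked, which is implicit in the assumption $0<\ep\ll 1$.
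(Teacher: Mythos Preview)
Your proof is correct and follows the standard approach. The paper itself does not supply a proof of this lemma; it is stated with a citation to \cite[Lemma 2.11]{Rosenzweig2020_PVMF} and used as a black box, so there is nothing in the present paper to compare against beyond confirming that your argument establishes the stated bounds, which it does.
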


\begin{lemma}[{\cite[Lemma 4.2]{Rosenzweig2020_PVMF}}]
\label{lem:kprop_err}
There exists a constant $C_{p}>0$ such that for every $0<\ep_2,\ep_3\ll 1$, we have the estimate
\begin{equation}
\label{eq:LHS_split}
\begin{split}	
&\left|\int_{(\R^2)^2\setminus\D_2}K_{1,v-v_{\ep_2}}(x,y) d(\sum_{i=1}^N\d_{x_i}-N\mu)(x)d(\sum_{i=1}^N\d_{x_i}-N\mu)(y)\right| \\
&\lesssim \|v\|_{LL(\R^2)}|\ln \ep_3|\paren*{\Fr_N(\ux_N,\mu)+N|\ln\ep_3| + C_pN^2\|\mu\|_{L^p(\R^2)}\ep_3^{\frac{2(p-1)}{p}}} \\
&\ph + N^2\|v\|_{LL(\R^2)}\ep_2|\ln\ep_2|\paren*{\ep_3^{-1} + C_p\|\mu\|_{L^p(\R^2)}^{\frac{p}{2(p-1)}}}.
\end{split}
\end{equation}
\end{lemma}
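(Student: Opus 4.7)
Set $w \coloneqq v - v_{\ep_2}$. Since convolution with $\chi_{\ep_2}$ commutes with the divergence, $w$ is divergence-free, and by \cref{lem:conv_bnds} it satisfies $\|w\|_{L^\infty(\R^2)} \lesssim \|v\|_{LL(\R^2)}\ep_2|\ln\ep_2|$ and $\|w\|_{LL(\R^2)} \lesssim \|v\|_{LL(\R^2)}$. These yield two complementary pointwise bounds on the kernel for $|x-y|$ small:
\begin{equation*}
|K_{1,w}(x,y)| \lesssim \|v\|_{LL(\R^2)}|\ln|x-y||, \qquad |K_{1,w}(x,y)| \lesssim \frac{\|v\|_{LL(\R^2)}\ep_2|\ln\ep_2|}{|x-y|}.
\end{equation*}
The proof will mirror the three-term decomposition in the proof of \cref{prop:FOLkprop}: smearing each Dirac mass at scale $\eta_i \coloneqq r_{i,\ep_1}$ with $\ep_1 \coloneqq \ep_3/4$, we split the LHS as $\mathrm{Term}_1 + \mathrm{Term}_2 + \mathrm{Term}_3$ corresponding to the doubly-smeared, mixed, and doubly-unsmeared contributions.

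\textbf{Main term.} The CZO boundedness of $\nabla T_{1,v}\nabla$ of \cref{prop:T_1v} is unavailable for log-Lipschitz $w$, so a new identity is needed. Using the antisymmetry of $\nabla\g$ together with $\nabla\g\ast(N\mu - \sum_i\d_{x_i}^{(\eta_i)}) = -\nabla H_{N,\ue_N}^{\mu,\ux_N}$ and $\Delta H_{N,\ue_N}^{\mu,\ux_N} = N\mu - \sum_i\d_{x_i}^{(\eta_i)}$, two integrations by parts yield, up to sign, the quadratic-form identity
\begin{equation*}
\mathrm{Term}_1 = 2\int_{\R^2} \nabla w : \nabla H_{N,\ue_N}^{\mu,\ux_N}\otimes\nabla H_{N,\ue_N}^{\mu,\ux_N}\,dx,
\end{equation*}
where the second boundary contribution vanishes owing to $\nabla\cdot w = 0$. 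To make $\nabla w$ usable in $L^\infty$, I perform an auxiliary mollification at scale $\ep_3$, splitting $w = w_{\ep_3} + (w - w_{\ep_3})$: the first piece contributes $\lesssim \|\nabla w_{\ep_3}\|_{L^\infty}\|\nabla H_{N,\ue_N}^{\mu,\ux_N}\|_{L^2}^2 \lesssim \|v\|_{LL(\R^2)}|\ln\ep_3|\|\nabla H_{N,\ue_N}^{\mu,\ux_N}\|_{L^2}^2$, while the residual (after reversing the IBP) is absorbed into the secondary error terms using $\|w - w_{\ep_3}\|_{L^\infty} \lesssim \|v\|_{LL(\R^2)}\ep_3|\ln\ep_3|$. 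Applying \cref{cor:grad_H} to $\|\nabla H_{N,\ue_N}^{\mu,\ux_N}\|_{L^2}^2$ yields the leading $\|v\|_{LL}|\ln\ep_3|\Fr_N$ contribution of \eqref{eq:LHS_split}.

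\textbf{Cross and diagonal terms; main obstacle.} For $\mathrm{Term}_2$ and $\mathrm{Term}_3$ I follow the same close-versus-far decomposition as in \cref{prop:FOLkprop}. Close pairs ($|x_i-x_j|<\ep_3$) are bounded via the logarithmic kernel estimate $|K_{1,w}|\lesssim \|v\|_{LL}|\ln\ep_3|$ together with the pair-counting bound of \cref{lem:count}; this produces terms absorbed into $\|v\|_{LL}|\ln\ep_3|\big(\Fr_N(\ux_N,\mu) + N|\ln\ep_3| + C_pN^2\|\mu\|_{L^p}\ep_3^{2(p-1)/p}\big)$. Far pairs are handled with the $L^\infty$-substitute kernel bound $|K_{1,w}|\lesssim \|v\|_{LL}\ep_2|\ln\ep_2|/\ep_3$ combined with the mean-value argument (exploiting $\eta_j\leq\ep_1\ll\ep_3$), together with the HLS-type bound $\|T_{1,w}\mu\|_{L^\infty} \lesssim \|v\|_{LL}\ep_2|\ln\ep_2|\|\mu\|_{L^p}^{p/(2(p-1))}$ appearing in the $\mathrm{Term}_{2,1}$-type analysis; these produce the $N^2\|v\|_{LL}\ep_2|\ln\ep_2|(\ep_3^{-1} + C_p\|\mu\|_{L^p}^{p/(2(p-1))})$ contribution. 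The principal difficulty is $\mathrm{Term}_1$: the divergence-freeness of $w$ is indispensable for the integration-by-parts identity to close as a quadratic form in $\nabla H_{N,\ue_N}^{\mu,\ux_N}$, and the auxiliary mollification at scale $\ep_3$ is precisely what generates the $|\ln\ep_3|$ prefactor multiplying $\Fr_N$---the logarithmic loss ultimately responsible for the Osgood rather than Gronwall estimate in the proof of \cref{thm:main}.
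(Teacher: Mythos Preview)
The paper does not prove this lemma; it is quoted from \cite[Lemma~4.2]{Rosenzweig2020_PVMF}, so there is no in-paper argument to compare against. On its merits, your outline has the right ingredients but contains one genuine gap and one imprecision, and it is more elaborate than the result requires.

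The imprecision is your close-pair bound ``$|K_{1,w}|\lesssim \|v\|_{LL}|\ln\ep_3|$'': for log-Lipschitz $w$ one only has $|K_{1,w}(x,y)|\lesssim \|v\|_{LL}\,|\ln|x-y||$, which diverges as $|x-y|\to 0$. The close-pair sum is therefore $\lesssim \|v\|_{LL}\sum_{|x_i-x_j|<\ep_3}\g(x_i-x_j)$, and this must be controlled by writing $\g(x_i-x_j)=(\g(x_i-x_j)-\tl\g(\ep_3))_+ +\tl\g(\ep_3)$ and invoking \eqref{eq:EN_renorm_bnd} of \cref{prop:CE} for the first piece together with \cref{lem:count} for the second---not by the counting lemma alone.

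The gap is in your $\mathrm{Term}_1$ residual. After the auxiliary mollification at scale $\ep_3$ you propose ``reversing the IBP'' and absorbing the $K_{1,w-w_{\ep_3}}$ piece into secondary errors, but that piece is an integral against the \emph{smeared} measure $\nu=N\mu-\sum_i\d_{x_i}^{(\eta_i)}$, and the crude bound $|K_{1,w-w_{\ep_3}}|\lesssim \|v\|_{LL}\ep_3|\ln\ep_3|/|x-y|$ produces $\iint|x-y|^{-1}\,d|\nu|\,d|\nu|$. The diagonal self-interactions here scale like $\sum_i\eta_i^{-1}$, which is not controlled by anything in \cref{cor:grad_H}; this step does not close.

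A more economical route---and the one consistent with how the cited reference uses only \cref{prop:CE} and \cref{lem:count}---is to skip the smearing and the stress-tensor identity entirely for this error term. Expand the integral directly as $\sum_{i\neq j}K_{1,w}(x_i,x_j)-2N\sum_i(T_{1,w}\mu)(x_i)+N^2\ipp{\mu,T_{1,w}\mu}$. The last two pieces are bounded uniformly via $|K_{1,w}|\lesssim \|w\|_{L^\infty}/|x-y|$ and \cref{lem:Linf_RP}, giving the $N^2\ep_2|\ln\ep_2|\|\mu\|_{L^p}^{p/(2(p-1))}$ contribution. The pair sum is split at threshold $\ep_3$: far pairs use the same $L^\infty$ bound to give $N^2\ep_2|\ln\ep_2|/\ep_3$, and close pairs use the refined logarithmic argument above to give the $|\ln\ep_3|\,\Fr_N$ contribution. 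No IBP, no auxiliary mollification, no $\mathrm{Term}_1$ are needed.
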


The triangle inequality implies that
\begin{align}
&\left|\int_{(\R^2)^2\setminus\D_2} K_{1,v}(x,y)d(N\mu-\sum_{i=1}^N\d_{x_i})(x)d(N\mu-\sum_{i=1}^N\d_{x_i})(y)\right|\\
&\leq \left|\int_{(\R^2)^2\setminus\D_2} K_{1,v_{\ep_2}}(x,y)d(N\mu-\sum_{i=1}^N\d_{x_i})(x)d(N\mu-\sum_{i=1}^N\d_{x_i})(y)\right| \nn\\
&\ph + \left|\int_{(\R^2)^2\setminus\D_2} K_{1,v-v_{\ep_2}}(x,y)d(N\mu-\sum_{i=1}^N\d_{x_i})(x)d(N\mu-\sum_{i=1}^N\d_{x_i})(y)\right| \nn\\
&\eqqcolon \mathrm{Term}_1 + \mathrm{Term}_2.
\end{align}
Since $\|\nabla v_{\ep_2}\|_{L^\infty}\lesssim |\ln \ep_2|$ by \cref{lem:conv_bnds}, we may apply \cref{prop:FOLkprop} to $\mathrm{Term}_1$, obtaining
\begin{align}
|\mathrm{Term}_1| &\lesssim \|\nabla v_{\ep_2}\|_{L^\infty(\R^2)}\paren*{\Fr_N(\ux_N,\mu) + N\tl{\g}(\ep_1) + C_pN^2\|\mu\|_{L^p(\R^2)}\ep_1^{\frac{2(p-1)}{p}}} \nn\\
&\ph + C_pN^2\ep_1\|\nabla v_{\ep_2}\|_{L^\infty(\R^2)} \|\mu\|_{L^p(\R^2)}^{\frac{p}{2(p-1)}} + \frac{N^2\ep_1\|\nabla v_{\ep_2}\|_{L^\infty(\R^2)}}{\ep_3} \nn\\
&\lesssim |\ln\ep_2|\|v\|_{LL(\R^2)}\paren*{\Fr_N(\ux_N,\mu) + N|\ln \ep_1| + C_pN^2\|\mu\|_{L^p(\R^2)}\ep_1^{\frac{2(p-1)}{p}}} \nn\\
&\ph + C_p N^2\ep_1|\ln \ep_2| \|v\|_{LL(\R^2)}\|\mu\|_{L^p(\R^2)}^{\frac{p}{2(p-1)}} + \frac{N^2\ep_1 \|v\|_{LL(\R^2)}|\ln \ep_2|}{\ep_3}, \label{eq:FOLL_T1_fin}
\end{align}
for any choice $4\ep_1 < \ep_2$. Unpacking the definition of $K_{1,v-v_{\ep_2}}$ and applying \cref{lem:kprop_err} to $\mathrm{Term}_2$, we find that
\begin{equation}
\label{eq:FOLL_T2_fin}
\begin{split}
|\mathrm{Term}_2| &\lesssim \|v\|_{LL(\R^2)}|\ln \ep_3|\paren*{\Fr_N(\ux_N,\mu)+N|\ln\ep_3| + C_pN^2\|\mu\|_{L^p(\R^2)}\ep_3^{\frac{2(p-1)}{p}}} \\
&\ph + N^2\|v\|_{LL(\R^2)}\ep_2|\ln\ep_2|\paren*{\ep_3^{-1} + \|\mu\|_{L^p(\R^2)}^{\frac{p}{2(p-1)}}}.
\end{split}
\end{equation}
By combining estimates \eqref{eq:FOLL_T1_fin} and \eqref{eq:FOLL_T2_fin} for $\mathrm{Term}_1$ and $\mathrm{Term}_2$, respectively, we conclude the proof of \cref{prop:FOLLkprop}.

\subsection{Proof of \cref{prop:SOkprop}}
\label{ssec:kprop_SO}
In this subsection, we prove \cref{prop:SOkprop}. As before, let $\ue_N\in (\R_+)^N$ be a parameter vector to be optimized at the end. Similarly to with $K_{1,v}$, we use the notation
\begin{equation}
T_{2,v}f(x) \coloneqq \int_{\R^2} K_{2,v}(x,y)f(y)dy, \qquad K_{2,v}(x,y) \coloneqq \nabla^2\g(x-y) : (v(x)-v(y))^{\otimes 2}.
\end{equation}
Making a decomposition similar to at the beginning of the proof of \cref{prop:FOLkprop}, we see that
\begin{align}
&\int_{(\R^2)^2\setminus\D_2} K_{2,v}(x,y)d(N\mu-\sum_{i=1}^N\d_{x_i})(x)d(N\mu-\sum_{i=1}^N\d_{x_i})(y) = \mathrm{Term}_1 + \mathrm{Term}_2 + \mathrm{Term}_3,
\end{align}
where
\begin{align}
\mathrm{Term}_1 &\coloneqq \int_{(\R^2)^2\setminus\D_2} K_{2,v}(x,y)d(N\mu-\sum_{i=1}^N\d_{x_i}^{(\et_i)})(x)d(N\mu-\sum_{i=1}^N\d_{x_i}^{(\et_i)})(y), \\
\mathrm{Term}_2 &\coloneqq -2\int_{(\R^2)^2\setminus\D_2} K_{2,v}(x,y)d(N\mu-\sum_{i=1}^N\d_{x_i}^{(\et_i)})(x)d(\sum_{i=1}^N\d_{x_i}-\d_{x_i}^{(\et_i)})(y),\\
\mathrm{Term}_3 &\coloneqq \int_{(\R^2)^2\setminus\D_2} K_{2,v}(x,y)d(\sum_{i=1}^N\d_{x_i}-\d_{x_i}^{(\et_i)})(x)d(\sum_{i=1}^N\d_{x_i}-\d_{x_i}^{(\et_i)})(y).
\end{align}
We proceed to estimate each of the $\mathrm{Term}_j$ individually.

\begin{description}[leftmargin=*]
\item[Estimate for $\mathrm{Term}_1$]
Integrating by parts once in both $x$ and $y$, we find that
\begin{equation}
\mathrm{Term}_1 = \int_{\R^2}(\nabla H_{N,\ue_N}^{\mu,\ux_N})(x) \cdot (\nabla T_{2,v} \nabla)(\nabla H_{N,\ue_N}^{\mu,\ux_N})(x) dx.
\end{equation}
By Cauchy-Schwarz and \cref{prop:T_2v}, we conclude that
\begin{equation}
\label{eq:SO_T1_fin}
|\mathrm{Term}_1| \lesssim \|\nabla v\|_{L^\infty(\R^2)}^2 \|\nabla H_{N,\ue_N}^{\mu,\ux_N}\|_{L^2(\R^2)}^2.
\end{equation}

\item[Estimate for $\mathrm{Term}_2$]
We first write
\begin{equation}
\begin{split}
-\mathrm{Term}_2 &= \underbrace{2N\int_{(\R)^2}K_{2,v}(x,y) d\mu(x)d(\sum_{i=1}^N\d_{x_i}-\d_{x_i}^{(\et_i)})(y)}_{\eqqcolon \mathrm{Term}_{2,1}} \\
&\ph - \underbrace{2\sum_{1\leq i,j\leq N}\int_{(\R^2)^2} K_{2,v}(x,y)d\d_{x_i}^{(\et_i)}(x)d(\d_{x_j}-\d_{x_j}^{(\et_j)})(y)}_{\eqqcolon \mathrm{Term}_{2,2}}.
\end{split}
\end{equation}

For $\mathrm{Term}_{2,1}$, a change of variable implies that
\begin{equation}
\mathrm{Term}_{2,1} = 2N\int_{\R^2} \paren*{(T_{2,v}\mu)(x_i)-(T_{2,v}\mu)(x_i+y)}\d_{0}^{(\et_i)}(y).
\end{equation}
Since by \cref{lem:Linf_RP},
\begin{equation}
\|\nabla T_{2,v}\mu\|_{L^\infty(\R^2)} \lesssim \|\nabla v\|_{L^\infty(\R^2)}^2 \|\I_1\mu\|_{L^\infty(\R^2)} \lesssim_p \|\nabla v\|_{L^\infty(\R^2)}^2\|\mu\|_{L^p(\R^2)}^{\frac{p}{2(p-1)}},
\end{equation}
the mean-value theorem implies that
\begin{equation}
\label{eq:SO_T21_fin}
|\mathrm{Term}_{2,1}| \lesssim_p N\|\nabla v\|_{L^\infty(\R^2)}^2\|\mu\|_{L^p(\R^2)}^{\frac{p}{2(p-1)}}\sum_{i=1}^N \et_i.
\end{equation}

For $\mathrm{Term}_{2,2}$, we split the sum $\sum_{1\leq i\neq j\leq N}$ into ``close'' and ``far'' pairs $(i,j)$ with distance threshold $\ep_3$:
\begin{equation}
\sum_{1\leq i, j\leq N}(\cdots) = \sum_{{1\leq i, j\leq N}\atop{|x_i-x_j|<\ep_3}}(\cdots) + \sum_{{1\leq i\neq j\leq N}\atop{|x_i-x_j|\geq\ep_3}}(\cdots).
\end{equation}
We use the sup bound (cf. the bound \eqref{eq:Kv_sup})
\begin{equation}
\label{eq:Ksig_sup}
\sup_{{x,y\in\R^2}\atop{x\neq y}} |K_{2,v}(x,y)| \lesssim \|\nabla v\|_{L^\infty(\R^2)}^2
\end{equation}
together with \cref{lem:count} to bound the close-pair contribution by
\begin{equation}
\sum_{{1\leq i,j\leq N}\atop{|x_i-x_j|<\ep_3}} \|\nabla v\|_{L^\infty(\R^2)}^2 \lesssim \|\nabla v\|_{L^\infty(\R^2)}^2\paren*{\Fr_N(\ux_N,\mu) + N\tl{\g}(2\ep_3) + C_p N^2 \|\mu\|_{L^p(\R^2)}\ep_3^{\frac{2p-2}{p}}}.
\end{equation}
For the far-pair contribution, we use Fubini-Tonelli and make a change of variable in $y$ to write
\begin{equation}
\begin{split}
\int_{(\R^2)^2}K_{2,v}(x,y)d\d_{x_i}^{(\et_i)}(x)d(\d_{x_j}-\d_{x_j}^{(\et_j)})(y) &=\int_{(\R^2)^2} \paren*{K_{2,v}(x,x_j)-K_{2,v}(x,x_j+\et_j y)}d\d_{x_i}^{(\et_i)}(x)d\d_{0}^{(1)}(y).
\end{split}
\end{equation}
Provided that $\et_i\ll \ep_3$, the (reverse) triangle inequality and mean value theorem imply that for $|x_i-x_j|\geq \ep_3$,
\begin{align}
\left|K_{2,v}(x,x_j)-K_{2,v}(x,x_j+\et_j y)\right| &\lesssim \left|\nabla^2\g(x-x_j)-\nabla^2\g(x-x_j-\et_j y)\right| \left|(v(x)-v(x_j))^{\otimes 2}\right|  \nn\\
&\ph + \left|\nabla^2\g(x-x_j-\et_j)\right| \left|(v(x)-v(x_j))^{\otimes 2} - (v(x)-v(x_j+\et_j y))^{\otimes 2}\right| \nn\\
&\lesssim \frac{\et_j\|\nabla v\|_{L^\infty(\R^2)}^2}{\ep_3}
\end{align}
for all $(x,y)\in \supp(\d_{x_i}^{(\et_i)})\times \supp(\d_{0}^{(1)})$. Hence,
\begin{equation}
\begin{split}
&\sum_{{1\leq i\neq j\leq N}\atop{|x_i-x_j|\geq\ep_3}} \left|\int_{(\R^2)^2} \paren*{K_{2,v}(x,x_j)-K_{2,v}(x,x_j+\et_j y)}d\d_{x_i}^{(\et_i)}(x)d\d_{0}^{(1)}(y)\right| \lesssim \frac{N\|\nabla v\|_{L^\infty(\R^2)}^2}{\ep_3}\sum_{j=1}^N \et_j.
\end{split}
\end{equation}
After a little bookkeeping, we conclude that
\begin{equation}
\label{eq:SO_T22_fin}
\begin{split}
|\mathrm{Term}_{2,2}| &\lesssim \|\nabla v\|_{L^\infty(\R^2)}^2\paren*{\Fr_N(\ux_N,\mu) + N\tl{\g}(2\ep_3) + C_p N^2 \|\mu\|_{L^p(\R^2)}\ep_3^{\frac{2p-2}{p}} +\frac{N}{\ep_3}\sum_{i=1}^N \et_i}.
\end{split}
\end{equation}

Combining the estimates \eqref{eq:SO_T21_fin} and \eqref{eq:SO_T22_fin} for $\mathrm{Term}_{2,1}$ and $\mathrm{Term}_{2,2}$, respectively, we conclude that
\begin{equation}
\label{eq:SO_T2_fin}
\begin{split}
|\mathrm{Term}_2| &\lesssim \|\nabla v\|_{L^\infty(\R^2)}^2\paren*{N\|\mu\|_{L^p(\R^2)}^{\frac{p}{2(p-1)}}\sum_{i=1}^N \et_i + \Fr_N(\ux_N,\mu) + N\tl{\g}(2\ep_3) + C_p N^2 \|\mu\|_{L^p(\R^2)}\ep_3^{\frac{2p-2}{p}}} \\
&\ph +\frac{N\|\nabla v\|_{L^\infty(\R^2)}^2}{\ep_3}\sum_{i=1}^N \et_i.
\end{split}
\end{equation}

\item[Estimate for $\mathrm{Term}_3$]
We proceed similarly as in our estimation of $\mathrm{Term}_{3}$ in the proof of \cref{prop:FOLkprop}, leading us to conclude that
\begin{equation}
\label{eq:SO_T3_fin}
|\mathrm{Term}_3| \lesssim \|\nabla v\|_{L^\infty(\R^2)}^2\paren*{\Fr_N(\ux_N,\mu)+N\tl{\g}(\ep_3)+C_pN^2\|\mu\|_{L^p(\R^2)}\ep_3^{\frac{2(p-1)}{p}} + \frac{N}{\ep_3}\sum_{i=1}^N\et_i}.
\end{equation}
\end{description}

Combining the estimates \eqref{eq:SO_T1_fin}, \eqref{eq:SO_T2_fin}, and \eqref{eq:SO_T3_fin} for $\mathrm{Term}_1$, $\mathrm{Term}_2$, and $\mathrm{Term}_3$, respectively, and then simplifying, we conclude that
\begin{equation}
\begin{split}
&\left|\int_{(\R^2)^2\setminus\D_2} K_{2,v}(x,y)d(N\mu-\sum_{i=1}^N\d_{x_i})(x)d(N\mu-\sum_{i=1}^N\d_{x_i})(y)\right|\\
&\lesssim \|\nabla v\|_{L^\infty(\R^2)}^2 \|\nabla H_{N,\ue_N}^{\mu,\ux_N}\|_{L^2(\R^2)}^2  + \|\nabla v\|_{L^\infty(\R^2)}^2 N\paren*{C_p\|\mu\|_{L^p(\R^2)}^{\frac{p}{2(p-1)}} +\ep_3^{-1}}\sum_{i=1}^N \et_i \\
&\ph + \|\nabla v\|_{L^\infty(\R^2)}^2\paren*{\Fr_N(\ux_N,\mu)+N|\ln\ep_3| + C_pN^2\|\mu\|_{L^p(\R^2)}\ep_3^{\frac{2(p-1)}{p}}}.
\end{split}
\end{equation}
Again choosing $\et_i=r_{i,\ep_1}\leq \ep_1$ for $1\leq i\leq N$ and applying estimate \eqref{eq:grad_H_r_bnd} of \cref{cor:grad_H}, we obtain that the right-hand side of the preceding inequality is $\lesssim$
\begin{equation}
\begin{split}
&\|\nabla v\|_{L^\infty(\R^2)}^2\paren*{\Fr_N(\ux_N,\mu) + N|\ln \ep_3|+ C_pN^2\|\mu\|_{L^p(\R^2)}\ep_3^{\frac{2(p-1)}{p}}}\\
&\ph + \|\nabla v\|_{L^\infty(\R^2)}^2N^2 \ep_1(C_p\|\mu\|_{L^p(\R^2)}^{\frac{p}{2(p-1)}} + \ep_3^{-1}).
\end{split}
\end{equation}
Recalling the statement of \cref{prop:SOkprop}, we see that the proof is complete.

\section{Proof of Main Results}
\label{sec:MR}
In this last section, we prove our main result, \cref{thm:main}, using the results of \cref{sec:kprop}. We first record a lemma giving the It\^o equation satisfied by the modulated energy $\Fr_N^{avg}(\ux_N(t),\xi(t))$ viewed as a real-valued stochastic process. We leave filling in the details of the proof of the lemma as an exercise for the interested reader.

\begin{lemma}[ME derivative]
For $N\in\N$, let $\ux_N:\Omega\times[0,T]\rightarrow (\R^2)^N\setminus\D_2$ be a strong solution to the system \eqref{eq:SPVM}. Let $\xi\in L^\infty(\Omega\times[0,T]; \P(\R^2)\cap L^\infty(\R^2))$ be a weak solution to equation \eqref{eq:SEul_S} satisfying the condition \eqref{eq:main_ass}. Then $\Fr_N^{avg}(\ux_N,\xi): [0,T]\rightarrow \R$ is $C^{1/2-}$ and for any $\vep>0$, with probability one, we have the It\^o identity
\begin{equation}
\label{eq:MR_ME_deriv}
\begin{split}
&\jp{\Fr_N^{avg}(\ux_N(t),\xi(t))}_{\vep} - \jp{\Fr_N^{avg}(\ux_N(0),\xi(0))}_{\vep} \\
&= \int_0^t \frac{\Fr_N^{avg}(\ux_N(s),\xi(s))}{\jp{\Fr_N^{avg}(\ux_N(s),\xi(s))}_{\vep}}\int_{(\R^2)^2\setminus\D_2} K_{1,u}(x,y)d(\xi-\xi_N)(s,x)d(\xi-\xi_N)(s,y)ds \\
&\ph \frac{1}{2}\sum_{k=1}^\infty \int_0^t \frac{\Fr_N^{avg}(\ux_N(s),\xi(s))}{\jp{\Fr_N^{avg}(\ux_N(s),\xi(s))}_{\vep}}\int_{(\R^2)^2\setminus\D_2} K_{1,(\sigma_k\cdot\nabla)\sigma_k}(x,y)d(\xi-\xi_N)(s,x)d(\xi-\xi_N)(s,y)ds \\
&\ph + \sum_{k=1}^\infty \int_0^t \frac{\Fr_N^{avg}(\ux_N(s),\xi(s))}{\jp{\Fr_N^{avg}(\ux_N(s),\xi(s))}_{\vep}}\int_{(\R^2)^2\setminus\D_2} K_{1,\sigma_k}(x,y)d(\xi-\xi_N)(s,x)d(\xi-\xi_N)(s,y) dW^k(s) \\
&\ph + \frac{1}{2}\sum_{k=1}^\infty \int_0^t \frac{\Fr_N^{avg}(\ux_N(s),\xi(s))}{\jp{\Fr_N^{avg}(\ux_N(s),\xi(s))}_{\vep}} \int_{(\R^2)^2\setminus\D_2} K_{2,\sigma_k}(x,y)d(\xi-\xi_N)(s,x)d(\xi-\xi_N)(s,y)ds \\
&\ph + \frac{1}{2}\sum_{k=1}^\infty \int_0^t \frac{\vep^2}{\jp{\Fr_N^{avg}(\ux_N(s),\xi(s))}_{\vep}^3} \paren*{\int_{(\R^2)^2\setminus\D_2} K_{1,\sigma_k}(x,y)d(\xi-\xi_N)(s,x)d(\xi-\xi_N)(s,y)}^2 ds
\end{split}
\end{equation}
for every $0\leq t\leq T$. Here, $\jp{\cdot}_\vep\coloneqq (\vep^2 + (\cdot)^2)^{1/2}$, $u$ is the velocity field associated to $\xi$ through the Biot-Savart law, and $\xi_N=\frac{1}{N}\sum_{i=1}^N\d_{x_i}$ is the empirical measure associated to $\ux_N$.
\end{lemma}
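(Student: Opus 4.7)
The plan is to compute the It\^o differential of $\Fr_N^{avg}(\ux_N(t),\xi(t))$ viewed as a real-valued semimartingale, and then apply It\^o's formula to the smooth function $y\mapsto \jp{y}_\vep=\sqrt{\vep^2+y^2}$. I would begin by decomposing
\[
\Fr_N^{avg}(\ux_N,\xi) = \frac{1}{N^2}\sum_{1\leq i\neq j\leq N}\g(x_i-x_j) - \frac{2}{N}\sum_{i=1}^N (\g\ast\xi)(x_i) + \int_{(\R^2)^2}\g(x-y)d\xi(x)d\xi(y),
\]
so that each piece can be handled by a separate It\^o expansion and then reassembled.

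For the pure particle sum, I would first convert the Stratonovich SPVM \eqref{eq:SPVM} into It\^o form and apply It\^o's formula pathwise to each $\g(x_i-x_j)$. Because the strong solution constructed in \cite{FGP2011} keeps the particles a.s.\ distinct for all $t$, the function $\g$ is $C^\infty$ at $x_i(t)-x_j(t)$ for almost every realization, so this step is rigorous after a standard cutoff--limit argument. The resulting drift, Stratonovich-correction, martingale, and quadratic-variation contributions are precisely the particle-side pieces of $K_{1,u}$, $K_{1,(\sigma_k\cdot\nabla)\sigma_k}$, $K_{1,\sigma_k}$, and $K_{2,\sigma_k}$ integrated against the empirical product measure $\xi_N\otimes\xi_N$ off the diagonal.

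For the cross term and the self-energy, I would exploit the measure-preserving stochastic flow representation $\xi(t)=\Phi_t\#\xi^0$ from \cref{thm:GWP}, rewriting $(\g\ast\xi)(x_i(t))=\int \g(x_i(t)-\Phi_t(y))d\xi^0(y)$ and $\int_{(\R^2)^2}\g(x-y)d\xi(t,x)d\xi(t,y)=\int_{(\R^2)^2}\g(\Phi_t(y)-\Phi_t(z))d\xi^0(y)d\xi^0(z)$, then applying It\^o pathwise to $\g$ composed with the relevant differences, using the Stratonovich SDE \eqref{eq:SDE_Eul} for $\Phi$ and the same noise $\{W^k\}$ driving the particles. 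Summing the three It\^o expansions, the drifts involving $u_N$ (from the particle SDE) and $u$ (from the SPDE) combine to produce the $K_{1,u}$ term tested against $\xi-\xi_N$ in each variable, the Stratonovich corrections combine to produce the $K_{1,(\sigma_k\cdot\nabla)\sigma_k}$ term, the martingale parts combine to produce the $K_{1,\sigma_k}dW^k$ term, and the quadratic covariations between $\sigma_k(x_i)dW^k$ on the particle side and $\sigma_k\cdot\nabla\xi\,dW^k$ on the field side contract with $\nabla^2\g$ to yield precisely the $K_{2,\sigma_k}$ It\^o correction.

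With the It\^o SDE for $\Fr_N^{avg}$ in hand, I would apply It\^o's formula with $f(y)=\jp{y}_\vep$, noting $f'(y)=y/\jp{y}_\vep$ and $f''(y)=\vep^2/\jp{y}_\vep^3$. The first-derivative factor produces the prefactor $\Fr_N^{avg}/\jp{\Fr_N^{avg}}_\vep$ multiplying each drift and martingale term, while $f''$ times the quadratic variation of the martingale part---which by It\^o isometry equals $\sum_{k=1}^\infty\bigl(\int K_{1,\sigma_k}(x,y)d(\xi-\xi_N)(x)d(\xi-\xi_N)(y)\bigr)^2 ds$---yields the final line of \eqref{eq:MR_ME_deriv}. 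The $C^{1/2-}$ H\"older regularity claim then follows from Kolmogorov--Chentsov applied to the martingale part, whose increments have $L^2$ norm controlled by $\|\nabla\usig\|_{\ell_k^2 L_x^\infty}$ times the bounded antisymmetric kernel. The main technical obstacle I anticipate is the joint coupling of the particle It\^o calculus with the SPDE It\^o calculus for the mixed and self-energy terms, together with interchanging the infinite sum $\sum_{k=1}^\infty$ with the stochastic integrals; both can be handled by first truncating to finitely many noise modes and replacing $\g$ by $\g_\eta$, applying It\^o cleanly in the regularized setting, and then passing to the limit using $\sum_k\|\sigma_k\|_{W^{1,\infty}}^2<\infty$ and the energy estimates of \cref{sec:CE}.
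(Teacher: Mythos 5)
The paper provides no proof of this lemma: immediately before its statement, the author writes ``We leave filling in the details of the proof of the lemma as an exercise for the interested reader.'' The only internal guidance is the heuristic It\^o expansion \eqref{eq:ME_deriv} in \cref{ssec:intro_RM}, to which \eqref{eq:MR_ME_deriv} is exactly the application of It\^o's formula with $f(y)=\jp{y}_\vep$. Your plan---decompose $\Fr_N^{avg}$ into the particle sum, cross term, and self-energy, expand each via It\^o calculus using the It\^o form of \eqref{eq:SPVM} and the flow representation $\xi(t)=\Phi_t\#\xi^0$, recombine, then apply It\^o to $\jp{\cdot}_\vep$---is precisely the route the paper is implicitly pointing at, so structurally your proposal is sound and consistent with the author's intent.

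One step deserves to be flagged more explicitly. For the $(\sigma_k\cdot\nabla)\sigma_k$-drift and the $\sigma_k$-martingale, both $\xi_N$ and $\xi$ are advected by the \emph{same} vector fields, so the $K_{1,(\sigma_k\cdot\nabla)\sigma_k}$, $K_{1,\sigma_k}$, and $K_{2,\sigma_k}$ terms tested against $(\xi-\xi_N)^{\otimes 2}$ fall out automatically from symmetrizing in $(x,y)$. The $u$-drift is different: the particle side carries the discrete, self-interaction-free velocity $u_N^{(i)}$, while the field side carries $u=\nabla^\perp\g\ast\xi$. Writing $\partial_t(\xi_N-\xi)=-\mathrm{div}\bigl((\xi_N-\xi)u\bigr)-\mathrm{div}\bigl(\xi_N(u_N-u)\bigr)$ and contracting with $2\g(x-y)\,d(\xi_N-\xi)(y)$, the second piece produces a term of the form $2\int(\nabla\g\ast(\xi_N-\xi))\cdot(\nabla^\perp\g\ast(\xi_N-\xi))\,d\xi_N$ (appropriately renormalized), which vanishes identically because $\nabla f\cdot\nabla^\perp f\equiv 0$ for any scalar $f$. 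This Biot-Savart orthogonality is what makes the drifts ``combine to produce the $K_{1,u}$ term'' in your sketch; it is the same cancellation underlying the deterministic modulated-energy derivative computation in \cite{Serfaty2020,Rosenzweig2020_PVMF}, and your proof should state it rather than leave it as an unremarked combination. With that said, the remaining technical issues you anticipate---the pathwise regularity needed to apply It\^o to $\g$ composed with particle/flow differences, and the interchange of $\sum_k$ with the stochastic integrals---are correctly identified and the proposed truncation-plus-$\g_\eta$-mollification strategy is the standard and adequate remedy.
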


Taking expectations of both sides of identity \eqref{eq:MR_ME_deriv} and using Fubini-Tonelli, we find that for any $\vep>0$,
\begin{equation}
\begin{split}
&\mathbf{E}\paren*{\jp{\Fr_N^{avg}(\ux_N(t),\xi(t))}_{\vep} - \jp{\Fr_N^{avg}(\ux_N(0),\xi(0))}_{\vep}} = \mathrm{Term}_1+\cdots+\mathrm{Term}_4,
\end{split}
\end{equation}
where
\begin{align}
\mathrm{Term}_1 &= \int_0^t \E\paren*{\frac{\Fr_N^{avg}(\ux_N(s),\xi(s))}{\jp{\Fr_N^{avg}(\ux_N(s),\xi(s))}_{\vep}}\int_{(\R^2)^2\setminus\D_2} K_{1,u}(x,y)d(\xi-\xi_N)(s,x)d(\xi-\xi_N)(s,y)}ds, \\
\mathrm{Term}_2 &= \frac{1}{2}\sum_{k=1}^\infty\int_0^t\E\paren*{\frac{\Fr_N^{avg}(\ux_N(s),\xi(s))}{\jp{\Fr_N^{avg}(\ux_N(s),\xi(s))}_{\vep}}\int_{(\R^2)^2\setminus\D_2} K_{1,(\sigma_k\cdot\nabla)\sigma_k}(x,y)d(\xi-\xi_N)(s,x)d(\xi-\xi_N)(s,y)}ds, \\
\mathrm{Term}_3 &= \frac{1}{2}\sum_{k=1}^\infty \int_0^t \E\paren*{\frac{\Fr_N^{avg}(\ux_N(s),\xi(s))}{\jp{\Fr_N^{avg}(\ux_N(s),\xi(s))}_{\vep}} \int_{(\R^2)^2\setminus\D_2} K_{2,\sigma_k}(x,y)d(\xi-\xi_N)(s,x)d(\xi-\xi_N)(s,y)}ds, \\
\mathrm{Term}_4 &= \frac{1}{2}\sum_{k=1}^\infty \int_0^t \E\paren*{\frac{\vep^2}{\jp{\Fr_N^{avg}(\ux_N(s),\xi(s))}_{\vep}^3} \paren*{\int_{(\R^2)^2\setminus\D_2} K_{1,\sigma_k}(x,y)d(\xi-\xi_N)(s,x)d(\xi-\xi_N)(s,y)}^2 }ds.
\end{align}
We now go to work on each of the $\mathrm{Term}_j$.

\begin{description}[leftmargin=*]
\item[Estimate for $\mathrm{Term}_1$]
Observe that $u$ is a.s. log-Lipschitz and by \cref{lem:PE_bnds} and conservation of the $L^\infty$ norm, we have the a.s. point-wise (in $\om\in\Omega$) bounds
\begin{align}
\|\xi(\om)\|_{L^\infty([0,T];L^\infty(\R^2))} &\lesssim \|\xi^0\|_{L^\infty(\R^2)},\\
\|u(\om)\|_{L^\infty([0,T]; LL(\R^2))} &\lesssim \|\xi^0\|_{L^\infty(\R^2)}.
\end{align}
So, we may apply \cref{prop:FOLLkprop} with $p=\infty$ point-wise in $(s,\om)$ to obtain that
\begin{equation}
\begin{split}
&\left|\int_{(\R^2)^2\setminus\D_2} K_{1,u}(x,y)d(\xi-\xi_N)(x)d(\xi-\xi_N)(y) \right| \\
&\lesssim |\ln\ep_3|\|\xi^0\|_{L^\infty(\R^2)}\paren*{\Fr_N^{avg}(\ux_N,\xi) + \frac{|\ln \ep_3|}{N} + \|\xi^0\|_{L^\infty(\R^2)}\ep_3^{2} + \ep_2|\ln\ep_2| (\frac{1}{\ep_3} + \|\xi^0\|_{L^\infty(\R^2)}^{1/2})},
\end{split}
\end{equation}
where $\ep_2, \ep_3: [0,\infty) \rightarrow (0,1)$ are measurable functions such that $1 \gg \ep_3(s)>\ep_2(s)>0$. Since $|r/\jp{r}_{\vep}|\leq 1$, we conclude from linearity of expectation that
\begin{equation}
\label{eq:MR_T1_fin}
\begin{split}
|\mathrm{Term}_1| &\lesssim \|\xi^0\|_{L^\infty(\R^2)}\int_0^t |\ln \ep_3(s)| \E(|\Fr_N^{avg}(\ux_N(s),\xi(s))|) ds \\
&\ph + \|\xi^0\|_{L^\infty(\R^2)}\int_0^t |\ln\ep_3(s)|\paren*{\frac{|\ln \ep_3(s)|}{N} + \|\xi^0\|_{L^\infty(\R^2)}\ep_3(s)^2 + \ep_2(s)|\ln\ep_2(s)|(\frac{1}{\ep_3(s)}+\|\xi^0\|_{L^\infty(\R^2)}^{1/2})}ds.
\end{split}
\end{equation}

\item[Estimate for $\mathrm{Term}_2$]
For each $k\in\N$, $(\sigma_k\cdot\nabla)\sigma_k \in W^{1,\infty}(\R^2;\R^2)$ by assumption. Since the vector field is divergence-free, an application of the product rule shows that it satisfies the gradient bound
\begin{equation}
\|\nabla ((\sigma_k\cdot\nabla)\sigma_k)\|_{L^\infty(\R^2)} \leq \|\nabla\sigma_k\|_{L^\infty(\R^2)}^2.
\end{equation}
Applying \cref{prop:FOLkprop} point-wise in $(s,\om)$ with $v=\sigma_k$ and then summing over $k$, we find that
\begin{align}
&\sum_{k=1}^\infty \left|\int_{(\R^2)^2\setminus\D_2} K_{1,(\sigma_k\cdot\nabla)\sigma_k}(x,y)d(\xi-\xi_N)(x)d(\xi-\xi_N)(y)\right| \nn\\
&\lesssim \sum_{k=1}^\infty \|\nabla\sigma_k\|_{L^\infty(\R^2)}^2 \paren*{\Fr_N^{avg}(\ux_N,\xi) + \frac{|\ln \ep_3|}{N} + \|\xi^0\|_{L^\infty(\R^2)}\ep_3^2 + \ep_1(\|\xi^0\|_{L^\infty(\R^2)}^{1/2} + \frac{1}{\ep_3})}.
\end{align}
where $\ep_1,\ep_3: [0,\infty)\rightarrow (0,1)$ are measurable functions such that $1\gg \ep_3(s)>2\ep_1(s)>0$. Since $|r/\jp{r}_{\vep}|\leq 1$, we conclude from the linearity of expectation that
\begin{equation}
\label{eq:MR_T2_fin}
\begin{split}
|\mathrm{Term}_2| &\lesssim \|\nabla\usig\|_{\ell_k^2L_x^\infty(\N\times\R^2)}^2\int_0^t \E(|\Fr_N^{avg}(\ux_N(s),\xi(s))|)ds \\
&\ph + \|\nabla\usig\|_{\ell_k^2L_x^\infty(\N\times\R^2)}^2\int_0^t \paren*{\frac{|\ln\ep_3(s)|}{N} + \|\xi^0\|_{L^\infty(\R^2)}\ep_3(s)^2 + \ep_1(s)(\|\xi^0\|_{L^\infty(\R^2)}^{1/2} +\frac{1}{\ep_3(s)})}ds.
\end{split}
\end{equation}

\item[Estimate for $\mathrm{Term}_3$]
For each $k\in\N$, $\sigma_k\in W^{1,\infty}(\R^2)$ by assumption. So applying \cref{prop:SOkprop} point-wise in $(s,\om)$ with $v=\sigma_k$, we find that
\begin{equation}
\begin{split}
&\sum_{k=1}^\infty \left|\int_{(\R^2)^2\setminus\D_2} K_{2,\sigma_k}(x,y)d(\xi-\xi_N)(x)d(\xi-\xi_N)(y) \right| \\
&\lesssim \|\nabla\usig\|_{\ell_k^2L_x^\infty(\N\times\R^2)}^2\paren*{ \Fr_N^{avg}(\ux_N,\xi) + \frac{|\ln\ep_3|}{N} + \|\xi^0\|_{L^\infty(\R^2)}\ep_3^2 + \ep_1(\|\xi^0\|_{L^\infty(\R^2)}^{1/2} + \frac{1}{\ep_3})},
\end{split}
\end{equation}
where $\ep_1,\ep_3$ are as above. By the same reasoning used to obtain \eqref{eq:MR_T2_fin}, it now follows that
\begin{equation}
\label{eq:MR_T3_fin}
\begin{split}
|\mathrm{Term}_3| &\lesssim \|\nabla\usig\|_{\ell_k^2 L_x^\infty(\N\times\R^2)}^2 \int_0^t \E(|\Fr_N^{avg}(\ux_N(s),\xi(s))|)ds \\
&\ph + \|\nabla\usig\|_{\ell_k^2L_x^\infty(\N\times\R^2)}^2\int_0^t \paren*{\frac{|\ln\ep_3(s)|}{N} + \|\xi^0\|_{L^\infty(\R^2)}\ep_3(s)^2 + \ep_1(s)(\|\xi^0\|_{L^\infty(\R^2)}^{1/2} +\frac{1}{\ep_3(s)})}ds.
\end{split}
\end{equation}

\item[Estimate for $\mathrm{Term}_4$]
Finally, we apply \cref{prop:FOLkprop} point-wise in $(s,\om)$ with $v=\sigma_k$ to obtain
\begin{equation}
\begin{split}
&\sum_{k=1}^\infty \left|\int_{(\R^2)^2\setminus\D_2} K_{1,\sigma_k}(x,y)d(\xi-\xi_N)(x)d(\xi-\xi_N)(y)\right|^2 \\
&\lesssim \|\nabla\usig\|_{\ell_k^2 L_x^\infty(\N\times\R^2)}^2\paren*{\Fr_N^{avg}(\ux_N,\xi)+\frac{|\ln \ep_3|}{N} + \|\xi^0\|_{L^\infty(\R^2)}\ep_3^2 + \ep_1(\|\xi^0\|_{L^\infty(\R^2)}^{1/2} + \frac{1}{\ep_3})}^2.
\end{split}
\end{equation}
Since we have the elementary inequality
\begin{equation}
\frac{\vep^2}{\jp{r}_{\vep}^3} \leq \frac{1}{\jp{r}_{\vep}} \leq \frac{1}{\vep} \qquad \forall r\in\R,
\end{equation}
it follows from the convexity of $z\mapsto z^2$ that if choose $\vep=(\ln N)/N$, then
\begin{equation}
\begin{split}
&\sum_{k=1}^\infty\E\paren*{\frac{\vep^2}{\jp{\Fr_N^{avg}(\ux_N(s),\xi(s))}_{\vep}^3} \paren*{\int_{(\R^2)^2\setminus\D_2} K_{1,\sigma_k}(x,y)d(\xi-\xi_N)(s,x)d(\xi-\xi_N)(s,y)}^2 } \\
&\lesssim  \|\nabla\usig\|_{\ell_k^2 L_x^\infty(\N\times\R^2)}^2 \E\paren*{\jp{\Fr_N^{avg}(\ux_N(s),\xi(s))}_{\frac{\ln N}{N}} }\\
&\ph +  \|\nabla\usig\|_{\ell_k^2 L_x^\infty(\N\times\R^2)}^2\frac{N}{\ln N}\paren*{\frac{|\ln\ep_3(s)|}{N} + \|\xi^0\|_{L^\infty(\R^2)}\ep_3(s)^2 +\ep_1(s)(\|\xi^0\|_{L^\infty(\R^2)}^{1/2} + \frac{1}{\ep_3(s)})}^2.
\end{split}
\end{equation}
After a little bookkeeping, we find that
\begin{equation}
\label{eq:MR_T4_fin}
\begin{split}
|\mathrm{Term}_4| &\lesssim \|\nabla\usig\|_{\ell_k^2 L_x^\infty(\N\times\R^2)}^2\int_0^t \E\paren*{\jp{\Fr_N^{avg}(\ux_N(s),\xi(s))}_{\frac{\ln N}{N}} }ds \\
&\ph + \frac{N\|\nabla\usig\|_{\ell_k^2 L_x^\infty(\N\times\R^2)}^2}{\ln N}\int_0^t \paren*{\frac{|\ln\ep_3(s)|}{N} + \|\xi^0\|_{L^\infty(\R^2)}\ep_3(s)^2 +\ep_1(s)(\|\xi^0\|_{L^\infty(\R^2)}^{1/2} + \frac{1}{\ep_3(s)})}^2 ds.
\end{split}
\end{equation}
\end{description}
\medskip

Combining our estimates \eqref{eq:MR_T1_fin}, \eqref{eq:MR_T2_fin}, \eqref{eq:MR_T3_fin}, and \eqref{eq:MR_T4_fin} for $\mathrm{Term}_1$, $\mathrm{Term}_2$, $\mathrm{Term}_3$, and $\mathrm{Term}_4$, respectively, we find that there exists a constant $C_1>0$ such that
\begin{equation}
\label{eq:MR_ep_pre}
\begin{split}
&\E\paren*{\jp{\Fr_N^{avg}(\ux_N(t),\xi(t))}_{\frac{\ln N}{N}}} - \E\paren*{\jp{\Fr_N^{avg}(\ux_N(0),\xi(0))}_{\frac{\ln N}{N}}} \\
&\leq C_1\paren*{\|\xi^0\|_{L^\infty(\R^2)} + \|\nabla\usig\|_{\ell_k^2 L_x^\infty(\N\times\R^2)}^2}\int_0^t |\ln \ep_3(s)| \E\paren*{\jp{\Fr_N^{avg}(\ux_N(s),\xi(s))}_{\frac{\ln N}{N}}}ds \\
&\ph + C_1\|\xi^0\|_{L^\infty(\R^2)}\int_0^t |\ln \ep_3(s)|\paren*{\frac{|\ln\ep_3(s)|}{N} + \|\xi^0\|_{L^\infty(\R^2)}\ep_3(s)^2 + \ep_2(s)|\ln\ep_2(s)|(\frac{1}{\ep_3(s)}+\|\xi^0\|_{L^\infty(\R^2)}^{1/2})}ds \\
&\ph + C_1\|\nabla\usig\|_{\ell_k^2L_x^\infty(\N\times\R^2)}^2\int_0^t \paren*{\frac{|\ln\ep_3(s)|}{N} + \|\xi^0\|_{L^\infty(\R^2)}\ep_3(s)^2 + \ep_1(s)(\|\xi^0\|_{L^\infty(\R^2)}^{1/2} + \frac{1}{\ep_3(s)})}ds \\
&\ph + \frac{C_1\|\nabla\usig\|_{\ell_k^2 L_x^\infty(\N\times\R^2)}^2 N}{\ln N}\int_0^t \paren*{\frac{|\ln\ep_3(s)|}{N} + \|\xi^0\|_{L^\infty(\R^2)}\ep_3(s)^2 + \ep_1(s)(\|\xi^0\|_{L^\infty(\R^2)}^{1/2} + \frac{1}{\ep_3(s)})}^2 ds .
\end{split}
\end{equation}
We choose the time-dependent functions $\ep_1,\ep_2$ according to
\begin{align}
\ep_1(s) &= \ep_2(s)^2,\\
\ep_2(s)|\ln \ep_2(s)| &= \ep_3(s)^2.
\end{align}
We now introduce the maximal function
\begin{equation}
\label{eq:GN_def}
\G_N(t) \coloneqq \sup_{0\leq s\leq t} \E\paren*{\jp{\Fr_N^{avg}(\ux_N(s),\xi(s))}_{\frac{\ln N}{N}}}.
\end{equation}
With this notation and substituting these choices into the right-hand side of \eqref{eq:MR_ep_pre} and simplifying, we obtain the inequality
\begin{equation}
\label{eq:MR_ep3_pre}
\begin{split}
\G_N(t) - \G_N(0) &\leq C_2\paren*{\|\xi^0\|_{L^\infty(\R^2)}+\|\nabla\usig\|_{\ell_k^2 L_x^\infty(\N\times\R^2)}^2}\int_0^t |\ln\ep_3(s)| \G_N(s)ds \\
&\ph + C_2\|\xi^0\|_{L^\infty(\R^2)}\int_0^t |\ln\ep_3(s)|\paren*{\frac{|\ln\ep_3(s)|}{N} + \ep_3(s)(1+\|\xi^0\|_{L^\infty(\R^2)}\ep_3(s))}ds \\
&\ph + C_2\|\nabla\usig\|_{\ell_k^2 L_x^\infty(\N\times\R^2)}^2\int_0^t \paren*{\frac{|\ln\ep_3(s)|}{N}+(\ep_3(s)+\|\xi^0\|_{L^\infty(\R^2)})\ep_3(s)^2}ds \\
&\ph + \frac{C_2\|\nabla\usig\|_{\ell_k^2 L_x^\infty(\N\times\R^2)}^2 N}{\ln N}\int_0^t \paren*{\frac{|\ln\ep_3(s)|}{N} + (\ep_3(s)+\|\xi^0\|_{L^\infty(\R^2)})\ep_3(s)^2}^2 ds,
\end{split}
\end{equation}
where $C_2\geq C_1$ is a possibly larger constant. For each $s\in[0,T]$, we choose
\begin{equation}
\ep_3(s) \coloneqq \min\{\G_N(s), e^{-1}, \|\xi^0\|_{L^\infty(\R^2)}^{-1}\},
\end{equation}
which is evidently a measurable function. Note that $(\ln N)/N\leq \ep_3(s)$, provided that $N\gg 1$. In fact, one can check from the continuity of the map $s\mapsto \E(\jp{\Fr_N^{avg}(\ux_N(s),\xi(s))}_{\ln N/N})$ that $\G_N$ is also continuous (cf. the proof of \cite[Lemma 5.4]{Rosenzweig2019_LL}). Substituting this choice for $\ep_3(s)$ into the right-hand side of inequality \eqref{eq:MR_ep3_pre} and performing a bit of algebra, we find that
\begin{equation}
\begin{split}
\label{eq:MR_pre_Os}
\G_N(t) &\leq \G_N(0) + C_3\paren*{\|\xi^0\|_{L^\infty(\R^2)}+\|\nabla\usig\|_{\ell_k^2 L_x^\infty(\N\times\R^2)}^2}\int_0^t |\ln\G_N(s)| \G_N(s)ds \\
&\ph + C_3\paren*{\|\xi^0\|_{L^\infty(\R^2)} + \|\nabla\usig\|_{\ell_k^2 L_x^\infty(\N\times\R^2)}^2}\frac{(\ln(N/\ln N))^2 t}{N},
\end{split}
\end{equation}
where $C_3\geq C_2$ is a possibly larger constant.

To close the estimate \eqref{eq:MR_pre_Os} using the Osgood lemma (recall \cref{lem:Os}), we argue as follows. Fix a time $t\in (0,\infty)$. Let $N\in\N$ be sufficiently large so that
\begin{equation}
\label{eq:MR_N_con}
C_3\paren*{\|\xi^0\|_{L^\infty(\R^2)} + \|\nabla\usig\|_{\ell_k^2 L_x^\infty(\N\times\R^2)}^2}t < \ln\ln\paren*{\G_N(0) + \frac{C_3 t (\|\xi^0\|_{L^\infty(\R^2)}+\|\nabla\usig\|_{\ell_k^2 L_x^\infty(\N\times\R^2)}^2)(\ln\frac{N}{\ln N})^2}{N}}^{-1}.
\end{equation}
By continuity of the function $\G_N$, there exists a time $0<t_N^*\leq t$ (with the convention that $t_N^* = t$ if no such time exists) such that
\begin{equation}
\G_N(s) < e^{-1}, \quad \forall 0\leq s\leq t_N^* \qquad \text{and} \qquad \G_N(t_N^*)=e^{-1}.
\end{equation}
Applying \cref{lem:Os} with modulus of continuity $r\mapsto r\ln(1/r)$ for $r\in [0,e^{-1}]$ together with \cref{rem:Os_ex_log}, it follows from the condition \eqref{eq:MR_N_con} that for every $0\leq s\leq t_N^*$, 
\begin{align}
\G_N(s) &\leq \exp\Bigg(-\exp\Bigg(\ln\ln\Big(\G_N(0) + \frac{C_3s(\|\xi^0\|_{L^\infty(\R^2)} + \|\nabla\usig\|_{\ell_k^2L_x^\infty(\N\times\R^2)}^2)(\ln\frac{N}{\ln N})^2}{N}\Big)^{-1}  \nn\\
&\hspace{50mm} - C_3s\paren*{\|\xi^0\|_{L^\infty(\R^2)} + \|\nabla\usig\|_{\ell_k^2 L_x^\infty(\N\times\R^2)}^2}\Bigg)\Bigg) \nn\\
&=\paren*{\G_N(0) + \frac{C_3 s (\|\xi^0\|_{L^\infty(\R^2)}+\|\nabla\usig\|_{\ell_k^2 L_x^\infty(\N\times\R^2)}^2)(\ln\frac{N}{\ln N})^2}{N}}^{e^{-C_3s(\|\xi^0\|_{L^\infty(\R^2)} + \|\nabla\usig\|_{\ell_k^2 L_x^\infty(\N\times\R^2)}^2)}}
\end{align}
and that the expression in the ultimate line is $<e^{-1}$. Thus, $t_N^*=t$ and therefore the proof of \cref{thm:main} is complete.

\appendix
\section{Singular integral operators}
\label{sec:app}
In the appendix, we review single integral operators (SIOs) not of Calder\'{o}n-Zygmund type, the so-called \emph{Calder\'{o}n $d$-commutators} from the work \cite{CJ1987} of Christ and Journ\'{e}. This review ultimately leads up to our proof that the matrix-valued SIO defined in \cref{ssec:intro_RM} of the introduction has an $L^2$-bounded extension.

\subsection{Multilinear singular integral forms}
We start with the basics of singular integral forms, closely following the presentation of Christ and Journ\'{e}.

\begin{mydef}[$\d$-BSIF]
\label{def:BSIF}
For $\d>0$, a \emph{$\d$-bilinear singular integral form ($\d$-BSIF)} is a mapping $T:(C_c^\infty(\R^d))^2\rightarrow\C$ such that if $f,g\in C_c^\infty(\R^d)$ have disjoint supports, then
\begin{equation}
\label{eq:BSIF_ker}
T(g,f) = \int_{(\R^d)^2} K(x,y)g(x)f(y)dxdy,
\end{equation}
where the kernel $K: (\R^d)^2\setminus\D_2 \rightarrow\C$ satisfies
\begin{align}
&|K(x,y)| \lesssim \frac{1}{|x-y|^d}, \label{eq:BSIF_size}\\
&|K(x,y)-K(x',y)| \lesssim \frac{|x-x'|^\d}{|x-y|^{d+\d}} \qquad\qquad \forall |x-x'|\leq \frac{|x-y|}{2}, \label{eq:BSIF_smooth_x}\\
&|K(y,x)-K(y,x')| \lesssim \frac{|x-x'|^\d}{|x-y|^{d+\d}} \qquad\qquad \forall |x-x'| \leq \frac{|x-y|}{2}. \label{eq:BSIF_smooth_y}
\end{align}
The best implicit constant in \eqref{eq:BSIF_size} is denoted by $|K|_0$ and in \eqref{eq:BSIF_smooth_x} and \eqref{eq:BSIF_smooth_y} by either $|K|_\d$ or $|T|_\d$.
\end{mydef}

As the reader may check, we can define a $\d$-BSIF on the domain $C_{c0}^\infty(\R^d)\times C^\infty(\R^d)$ or $C^\infty(\R^d)\times C_{c0}^\infty(\R^d)$, where $C_{c0}^\infty(\R^d)\subset C_c^\infty(\R^d)$ is the subspace consisting of mean-zero functions. Therefore, we can define the elements $T_1(1), T_2(1)\in (C_{c0}^\infty(\R^d))'$ by
\begin{align}
\ipp{g,T_1(1)} &= T(g,1) \qquad \forall g\in C_{c0}^\infty(\R^d), \label{eq:T_1(1)}\\
\ipp{T_2(1),f} &= T(1,f) \qquad \forall f\in C_{c0}^\infty(\R^d). \label{eq:T_2(1)}
\end{align}

\begin{mydef}[WBP]
\label{def:WBP}
A $\d$-BSIF $T$ has the \emph{weak boundedness property (WBP)} if for every pair $(f,g) \in (C_{c}^\infty(\R^d))^2$ satisfying
\begin{equation}
\max\{\diam(\supp f), \diam(\supp g)\} \leq 4t,
\end{equation}
it holds that
\begin{equation}
\label{eq:WBP}
|T(g,f)| \lesssim t^d\paren*{\|g\|_{L^\infty(\R^d)} + t\|\nabla g\|_{L^\infty(\R^d)}}\paren*{\|f\|_{L^\infty(\R^d)} + t\|\nabla f\|_{L^\infty(\R^d)}}.
\end{equation}
The best implicit constant in \eqref{eq:WBP} is denoted by $|T|_{W}$.
\end{mydef}

\begin{mydef}[Bounded $\d$-BSIF]
\label{def:BSIF_bnd}
A $\d$-BSIF $T$ is said to be \emph{bounded} if
\begin{equation}
\label{eq:BSIF_bnd}
|T(g,f)| \lesssim \|f\|_{L^2(\R^d)} \|g\|_{L^2(\R^d)} \qquad \forall f,g\in C_c^\infty(\R^d).
\end{equation}
We denote the best constant implicit in \eqref{eq:BSIF_bnd} by $\|T\|_{2,2}$, and we define the quantity $\|T\|_{\d} \coloneqq |K|_{\d}+\|T\|_{2,2}$, where $|K|_{\d}$ is as in \cref{def:BSIF}.
\end{mydef}

The classical $T(1)$ theorem of David and Journ\'{e} shows that the boundedness of $T$ is equivalent to the distributions $T_1(1), T_2(1)$ defined in \eqref{eq:T_1(1)}, \eqref{eq:T_2(1)} above belonging to the space BMO of bounded mean oscillation \emph{and} $T$ having the WBP of \cref{def:WBP}.
\begin{thm}[$T(1)$ theorem \cite{DJ1984, CM1986}]
\label{thm:T(1)}
The $\d$-BSIF $T$ is bounded if and only if $T_1(1),T_2(1)\in BMO(\R^d)$ and $T$ has the WBP. Moreover,
\begin{equation}
\|T\|_{2,2} \lesssim \paren*{\|T_1(1)\|_{BMO(\R^d)} + \|T_2(1)\|_{BMO(\R^d)} + |T|_{W}} + c_\d|T|_{\d}.
\end{equation}
\end{thm}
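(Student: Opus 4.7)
The plan is to prove the two directions of the equivalence separately. For the ``easy'' direction, suppose $T$ is bounded. The WBP is then immediate from Cauchy-Schwarz: on the support hypothesis we have $\|f\|_{L^2(\R^d)}, \|g\|_{L^2(\R^d)} \lesssim t^{d/2}\|\cdot\|_{L^\infty(\R^d)}$, and the stronger form with $\|\nabla\cdot\|_{L^\infty}$ terms is superfluous in this direction. That $T_1(1), T_2(1) \in BMO(\R^d)$ is standard Calder\'on-Zygmund lore: for a cube $Q$ of side $r$, split the argument of $T$ as $1 = 1_{2Q} + 1_{(2Q)^c}$; the local piece is controlled by $\|T\|_{2,2}\,|Q|^{1/2}$ via Cauchy-Schwarz, while the far piece has oscillation over $Q$ bounded via the H\"older-smoothness \eqref{eq:BSIF_smooth_x}--\eqref{eq:BSIF_smooth_y}, with a gain $|x-x'|^\delta/|x-y|^{d+\delta}$ that integrates against $1$ to $O(1)$.

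For the hard direction, the strategy is a paraproduct reduction followed by Cotlar-Stein almost orthogonality. The first step is to construct, for each $b \in BMO(\R^d)$, a bounded $\delta$-BSIF $\Pi_b$ --- the Coifman-Meyer paraproduct --- satisfying $(\Pi_b)_1(1) = b$, $(\Pi_b)_2(1) = 0$, with WBP, and with $\|\Pi_b\|_{2,2} \lesssim \|b\|_{BMO(\R^d)}$. Setting
\begin{equation*}
\tilde T \coloneqq T - \Pi_{T_1(1)} - \Pi_{T_2(1)}^*,
\end{equation*}
one obtains a new $\delta$-BSIF with kernel constants $|\tilde T|_\delta \lesssim |T|_\delta + \|T_1(1)\|_{BMO} + \|T_2(1)\|_{BMO}$, with the WBP, and satisfying $\tilde T_1(1) = \tilde T_2(1) = 0$. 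Therefore it suffices to establish $L^2$-boundedness under the three hypotheses plus the additional cancellation $T_1(1) = T_2(1) = 0$.

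For such a $T$, fix a Littlewood-Paley decomposition $I = \sum_j Q_j$ with $Q_j$ a convolution operator whose kernel $\psi_j^\vee$ is smooth, localized at scale $2^{-j}$, and has zero mean. The central estimate is the almost-orthogonality bound
\begin{equation*}
\|Q_j T Q_k\|_{L^2\to L^2} \lesssim 2^{-\delta|j-k|}\bigl(\|T\|_\delta + |T|_W\bigr),
\end{equation*}
proved on the kernel side: when $j \geq k+C$, write the kernel of $Q_j T Q_k$ as a double integral against $K(u,v)$ and exploit $\int \psi_j = 0$ to subtract the value of $K(u,v)$ at a reference point in $u$, picking up the H\"older gain \eqref{eq:BSIF_smooth_x}; the symmetric case uses \eqref{eq:BSIF_smooth_y} and the cancellations $T_1(1) = T_2(1) = 0$ to kill the would-be BMO-type obstruction; the diagonal regime $|j-k| \leq C$ uses the WBP applied to bump functions at scale $2^{-\min(j,k)}$. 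The Cotlar-Stein lemma then gives $\|T\|_{2,2} \lesssim \sup_j \sum_k 2^{-\delta|j-k|/2} < \infty$, with a quantitative constant of the form $\|T\|_\delta + |T|_W$; tracing this through the paraproduct substitution produces the bound stated in the theorem.

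The main obstacle is the construction and analysis of $\Pi_b$: one must verify that it is genuinely a $\delta$-BSIF with the asserted action on $1$, and that this action makes sense against the ill-defined pairings \eqref{eq:T_1(1)}--\eqref{eq:T_2(1)} (which are a priori defined only modulo constants). A secondary subtle point is that $T_1(1)$ is defined only modulo constants, but a constant is killed by the mean-zero test against $g \in C_{c0}^\infty(\R^d)$, so the paraproduct correction $\Pi_{T_1(1)}$ is well-defined up to an operator with zero $T(1)$-data which can be absorbed. The rest is careful bookkeeping of constants to produce the final linear dependence on $|T|_\delta$ isolated in the theorem.
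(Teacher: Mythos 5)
The paper does not prove this theorem. It is the classical bilinear $T(1)$ theorem, quoted from \cite{DJ1984, CM1986} as a black box and included only to motivate the multilinear version of Christ and Journ\'e stated as \cref{thm:T(1)_n}; what the paper actually invokes downstream (in the proofs of \cref{prop:T_1v} and \cref{prop:T_2v}) is \cref{thm:CJ}, not \cref{thm:T(1)}. So there is no in-house argument to compare your proposal against.

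Taken as a free-standing sketch, your outline faithfully reproduces the David--Journ\'e strategy: Cauchy--Schwarz plus the local/far decomposition for the easy direction, and for the converse the paraproduct subtraction $\tilde T = T - \Pi_{T_1(1)} - \Pi_{T_2(1)}^*$ reducing to the cancellative case $\tilde T_1(1)=\tilde T_2(1)=0$, handled by Cotlar--Stein almost-orthogonality for $Q_jTQ_k$. Two points worth tightening. First, your quantitative almost-orthogonality bound is written as $\lesssim 2^{-\d|j-k|}\bigl(\|T\|_\d + |T|_W\bigr)$, but in the paper's conventions (\cref{def:BSIF_bnd}) $\|T\|_\d = |K|_\d + \|T\|_{2,2}$ already includes the operator norm you are trying to bound; the correct constant there is $|T|_\d$ (equivalently $|K|_\d$), not $\|T\|_\d$, else the argument is circular. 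Second, Cotlar--Stein as usually stated controls a one-parameter sum $\sum_j T_j$, so the double sum $\sum_{j,k} Q_jTQ_k$ needs to be regrouped by $n=j-k$ before the lemma applies; the line $\|T\|_{2,2}\lesssim\sup_j\sum_k 2^{-\d|j-k|/2}$ elides this step, and the conclusion should read $\|T\|_{2,2}\lesssim\sum_n 2^{-\d|n|/2}$ after regrouping. Neither issue is fatal, the paraproduct construction and BMO characterization you invoke are standard, and the sketch is otherwise a correct account of how the theorem is proved, including the linear tracking of constants that produces the stated bound.
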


We now introduce the multilinear generalization of $\d$-BSIFS.
\begin{mydef}[$\d$-n SIF]
For $0<\d\leq 1$ and integer $n\geq 2$, a \emph{$\d$-$n$-linear singular integral form ($\d$-$n$ SIF)} is a mapping $U:(C_c^\infty(\R^d))^n\rightarrow\C$ with the following property. For every $1\leq i<j\leq N$ and $h_{m_1},\ldots,h_{m_{n-2}}\in C_c^\infty(\R^d)$, where $m_k\in\{1,\ldots,n\}\setminus \{i,j\}$ and $m_1<\cdots<m_{n-2}$, define the bilinear form
\begin{equation}
\begin{split}
U_{ij}(h_{m_1},\ldots,h_{m_{n-2}}): (C_c^\infty(\R^d))^2\rightarrow \C \\
U_{ij}(h_{m_1},\ldots,h_{m_{n-2}})(h_i,h_j) \coloneqq  U(h_1,\ldots,h_n).
\end{split}
\end{equation}
Then $U_{ij}(h_{m_1},\ldots,h_{m_{n-2}})$ is a $\d$-BSIF and
\begin{equation}
\label{eq:Uij_BSIF}
U_{ij}(h_{m_1},\ldots,h_{m_{n-2}}) \lesssim_{i,j} \prod_{{1\leq k\leq n}\atop{k\notin\{i,j\}}} \|h_k\|_{L^\infty(\R^d)}.
\end{equation}
We denote the best constant implicit in \eqref{eq:Uij_BSIF} by $|U_{ij}|_\d$ and define $|U|_{\d}\coloneqq \sup_{1\leq i<j\leq n} |U_{ij}|_{\d}$. For every $1\leq i\leq n$ and $1\leq i<j\leq n$, we denote the best implicit constants in the estimates\footnote{One can show that these estimates are, in fact, equivalent (see \cite[Theorem A]{CJ1987}).}
\begin{align}
|U(f_1,\ldots,f_n)| &\lesssim_i \paren*{\prod_{{1\leq k\leq n}\atop{k\neq i}} \|f_k\|_{L^\infty(\R^d)}} \|f_i\|_{\mathscr{H}^1(\R^d)}, \qquad \forall f_1,\ldots,f_n\in C_c^\infty(\R^d) \\
|U(f_1,\ldots,f_n)| &\lesssim_{i,j} \paren*{\prod_{{1\leq k\leq n}\atop{k\notin\{i,j\}}}\|f_k\|_{L^\infty(\R^d)}} \|f_i\|_{L^2(\R^d)} \|f_j\|_{L^2(\R^d)}, \qquad \forall f_1,\ldots,f_n\in C_c^\infty(\R^d)
\end{align}
respectively by $\|U\|_{i}$ and $\|U\|_{ij}$, where $\mathscr{H}^1(\R^d)$ denotes the Hardy space. We say that $U$ is \emph{bounded} if
\begin{equation}
\|U\| \coloneqq \max_{1\leq i<j\leq n}\{\|U\|_{i}, \|U\|_{ij}\} < \infty.
\end{equation}
\end{mydef}

To each $\d$-$n$ SIF $U$ and integer $1\leq m\leq n$, we can define a mutlilinear operator
\begin{equation}
\begin{split}
&\pi_{U}^{(m)}:(C_c^\infty(\R^d))^{n-1} \rightarrow (C_c^\infty(\R^d))',\\
&\ipp{h_m, \pi_U^{(m)}(h_1,\ldots,h_{m-1},h_{m+1},\ldots,h_n)} \coloneqq U(h_1,\ldots,h_n).
\end{split}
\end{equation}
As in the bilinear case, $U(f_1,\ldots,f_n)$ remains well-defined when one $f_i\in C_{c0}^\infty(\R^d)$ and all the other $f_j\in C^\infty(\R^d)$. For each $1\leq i\leq n$, we can then define $U_i(1)\in (C_{c0}^\infty(\R^d))'$ by
\begin{equation}
\ipp{g,U_i(1)} \coloneqq U(\underbrace{1,\ldots,1}_{i-1},g,\underbrace{1,\ldots,1}_{n-i}), \qquad \forall g\in C_{c0}^\infty(\R^d).
\end{equation}
To generalize the bilinear WBP, \cref{def:WBP}, to the multilinear case, we introduce the Fourier multiplier
\begin{equation}
\wh{P_t f}(\xi) \coloneqq \wh{\varphi}(t\xi)\wh{f}(\xi),
\end{equation}
where $\varphi \in C_c^\infty(\R^d)$ is a nonnegative, radial function with unit mean.
\begin{mydef}[$\d$-$n$ WBP]
\label{def:WBP_n}
We say that the $\d$-$n$ SIF has the \emph{WBP} if for every pair $1\leq i<j\leq n$, all $t>0$ and $f_i,f_j\in C_c^\infty(\R^d)$ satisfying
\begin{equation}
\max_{i,j}\{\diam(\supp f_i), \diam(\supp f_j)\} \leq 4t,
\end{equation}
it holds for all $f_k\in C_c^\infty(\R^d)$, $k\notin\{i,j\}$, that
\begin{equation}
\label{eq:WBP_n_ic}
\begin{split}
&|U(P_t f_1,\ldots, P_t f_{i-1}, f_i, P_t f_{i+1},\ldots,P_t f_{j-1},f_j, P_t f_{j+1},\ldots, P_t f_n)|\\
&\lesssim_{i,j} \paren*{\prod_{{1\leq k\leq n}\atop {k\notin\{i,j\}}} \|f_k\|_{L^\infty(\R^d)}}t^d\paren*{\|f_i\|_{L^\infty(\R^d)}+t\|\nabla f_i\|_{L^\infty(\R^d)}}\paren*{\|f_j\|_{L^\infty(\R^d)}+t\|\nabla f_j\|_{L^\infty(\R^d)}}.
\end{split}
\end{equation}
We denote the best implicit constant in \eqref{eq:WBP_n_ic} by $|U_{ij}|_{w}$ and define $|U|_{w} \coloneqq \max_{1\leq i<j\leq n} |U_{ij}|_{w}$.
\end{mydef}

\begin{remark}
The constants in \cref{def:WBP_n} implicitly depend on the function $\varphi$ underlying the definition of the operator $P_t$; however, this dependence will not be important, as $\varphi$ is fixed. Additionally, the definition of $|U_{ij}|_{w}$ is not quite the same as in the bilinear WBP \cref{def:WBP} due to the use of $P_t$.
\end{remark}

The following theorem due to Christ and Journ\'{e} is the multilinear generalization of \cref{thm:T(1)}.
\begin{thm}[{\cite[Theorem 2]{CJ1987}}]
\label{thm:T(1)_n}
A $\d$-$n$ SIF $U$ is bounded if and only if it has the WBP \emph{and} $U_i(1)\in BMO(\R^d)$ for every $1\leq i\leq n$. Moreover,
\begin{equation}
\|U\| \lesssim_{\d} \sum_{i=1}^n \|U_i(1)\|_{BMO(\R^d)} +n^2(|U|_{\d}+|U|_{w}).
\end{equation}
\end{thm}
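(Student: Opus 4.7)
I would prove the equivalence by handling the two directions separately; the necessity is routine via $\mathscr{H}^1$-$BMO$ duality, while the sufficiency is the substantive content and is to be reduced to the bilinear $T(1)$ theorem stated as Theorem~\ref{thm:T(1)}.

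\emph{Necessity.} Assuming $\|U\|<\infty$, the multilinear WBP follows directly from the definition of $\|U\|_{ij}$: given $f_i, f_j\in C_c^\infty(\R^d)$ supported in a set of diameter at most $4t$, and arbitrary $f_k\in C_c^\infty(\R^d)$, one bounds
\[
|U(P_tf_1,\ldots,f_i,\ldots,f_j,\ldots,P_tf_n)| \le \|U\|_{ij}\Big(\prod_{k\notin\{i,j\}}\|P_tf_k\|_{L^\infty}\Big)\|f_i\|_{L^2}\|f_j\|_{L^2},
\]
then uses $\|P_tf_k\|_{L^\infty}\lesssim\|f_k\|_{L^\infty}$ and the support bound $\|f_\ell\|_{L^2}\lesssim t^{d/2}\|f_\ell\|_{L^\infty}$. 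For each $1\le i\le n$, $\mathscr{H}^1$-$BMO$ duality combined with the $\|U\|_i$ estimate gives $|\langle g,U_i(1)\rangle|=|U(1,\ldots,g,\ldots,1)|\le\|U\|_i\|g\|_{\mathscr{H}^1(\R^d)}$ for all $g\in C_{c0}^\infty(\R^d)$, hence $\|U_i(1)\|_{BMO}\lesssim\|U\|_i$.

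\emph{Sufficiency.} Fix $1\le i<j\le n$ and $h_{m_\ell}\in C_c^\infty(\R^d)$ with $\|h_{m_\ell}\|_{L^\infty}\le1$ in the remaining slots, and set $T:=U_{ij}(h_{m_1},\ldots,h_{m_{n-2}})$. This $T$ is a $\delta$-BSIF with $|T|_\delta\le|U|_\delta$, so the plan is to apply Theorem~\ref{thm:T(1)} to $T$. This requires (i) verifying the bilinear WBP for $T$ and (ii) estimating $\|T_1(1)\|_{BMO}$ and $\|T_2(1)\|_{BMO}$. For (i), I would decompose each fixed slot as $h_{m_\ell}=P_th_{m_\ell}+(I-P_t)h_{m_\ell}$: the first piece is directly controlled by the multilinear WBP constant $|U|_w$, while the collection of cross-terms involving at least one factor $(I-P_t)h_{m_\ell}$ will be reorganised via a telescoping paraproduct identity $U=\sum_m\Pi_m(U_m(1))+R$, where $R$ has WBP with constant $\lesssim|U|_w$, reducing these errors to pairings of $U_m(1)$ against $\mathscr{H}^1$-atoms and hence to $\sum_m\|U_m(1)\|_{BMO}$. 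For (ii), the distribution $T_1(1)$ should be identified with $U_i(1)$ paired against the tensor $h_{m_1}\otimes\cdots\otimes h_{m_{n-2}}$ plus commutator errors absorbed by $|U|_w$, giving $\|T_\ell(1)\|_{BMO}\lesssim\|U_i(1)\|_{BMO}+\|U_j(1)\|_{BMO}+|U|_w$ uniformly in the $h_{m_\ell}$.

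\emph{Conclusion and main obstacle.} Once (i) and (ii) are verified, Theorem~\ref{thm:T(1)} yields $\|T\|_{2,2}\lesssim_\delta\sum_m\|U_m(1)\|_{BMO}+|U|_\delta+|U|_w$ uniformly in $\|h_{m_\ell}\|_{L^\infty}\le1$, which is precisely the $\|U\|_{ij}$ estimate in Christ-Journ\'e's definition. The stronger one-distinguished-variable estimate $\|U\|_i$ then follows by atomic decomposition of $\mathscr{H}^1(\R^d)$: write $f_i=\sum_\alpha\lambda_\alpha a_\alpha$ for $\mathscr{H}^1$-atoms $a_\alpha$, apply the $L^2\times L^2$ bound $\|U\|_{ij}$ to each atom against the $L^\infty$ functions in the remaining slots, and sum. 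Taking the maximum over the $\binom{n}{2}$ index pairs supplies the $n^2$ combinatorial factor. The hardest step is (i): constructing the paraproduct decomposition carefully enough to convert the multilinear WBP (which carries $P_t$-smoothing on the fixed slots) into the bilinear WBP (which does not), while ensuring the errors collapse cleanly onto $\sum_m\|U_m(1)\|_{BMO}+|U|_w$ and not onto something uncontrolled. This is exactly the obstruction that forced Christ and Journ\'e to introduce $P_t$ into the multilinear WBP in Definition~\ref{def:WBP_n} in the first place.
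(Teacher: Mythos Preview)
The paper does not prove this statement: it is quoted verbatim from Christ and Journ\'e \cite[Theorem~2]{CJ1987} as a background result in the appendix, with no proof given. There is therefore no ``paper's own proof'' to compare your proposal against.

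That said, a brief comment on your sketch. The necessity direction is fine. For sufficiency, your overall strategy---freeze the $L^\infty$ arguments, reduce to the bilinear $T(1)$ theorem, and control the resulting bilinear WBP and $T_\ell(1)$ via a paraproduct decomposition---is indeed the shape of the Christ--Journ\'e argument. However, step (ii) as you describe it is too optimistic: $T_1(1)$ is \emph{not} simply $U_i(1)$ plus a commutator error, because the frozen functions $h_{m_\ell}$ sit in the other slots and genuinely interact with the kernel. The actual mechanism in \cite{CJ1987} is to first subtract off $n$ explicit paraproducts $\Pi_m(U_m(1))$ from $U$ so that the remainder $R$ satisfies $R_m(1)=0$ for all $m$; one then shows $R$ is bounded directly via Littlewood--Paley square-function estimates and Carleson measures, rather than by invoking the bilinear $T(1)$ theorem slot-by-slot. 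Your step (i) gestures at this paraproduct reduction but places it inside the WBP verification rather than at the outset, which would make the bookkeeping circular. If you want to fill this in, consult \cite{CJ1987} directly---the argument is not reproduced in the present paper.
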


\subsection{Calder\'{o}n $d$-commutators}
\label{ssec:app_CdC}
We now recall the class of Calder\'{o}n $d$-commutators, a (nontrival) higher-dimensional generalization of the classical Calder\'{o}n commutators. Let $T$ be a Calder\'{o}n-Zygmund convolution operator bounded on $L^2(\R^d)$. Let $K(x,y) = K(x-y)$ satisfying conditions \eqref{eq:BSIF_size}, \eqref{eq:BSIF_smooth_x}, and \eqref{eq:BSIF_smooth_y} denote the convolution kernel associated to $T$ in the sense of \eqref{eq:BSIF_ker}. For $a\in C^\infty(\R^d)$, we define
\begin{equation}
\label{eq:m_xy}
m_{x,y}a \coloneqq \int_0^1 a(tx+(1-t)y)dt \qquad \forall x\neq y\in\R^d.
\end{equation}
Then for $f_1,\ldots,f_{n+2}\in C_c^\infty(\R^d)$, the integral
\begin{equation}
\label{eq:CdC}
\int_{(\R^d)^2} K(x-y)\paren*{\prod_{i=1}^n m_{x,y}f_i}f_{n+1}(x)f_{n+2}(y)dxdy
\end{equation}
is well-defined if $\supp f_{n+1}, \supp f_{n+2}$ are disjoint and determines an $(n+2)$-linear form denoted by $W$. Note that if the kernel $K$ has a cancellation property, such as zero average over annuli, then \eqref{eq:CdC} is well-defined in the principal value sense without restriction on the supports of $f_{n+1}, f_{n+2}$. The main result from \cite{CJ1987} that we need is the following theorem establishing the boundedness of $W$.

\begin{thm}[{\cite[Theorem 3]{CJ1987}}]
\label{thm:CJ}
For every $\d>0$ and $n\in\N$,
\begin{equation}
|W(f_1,\ldots,f_{n+2})| \lesssim_{\d} n^{2+\d}\paren*{\prod_{i=1}^n \|f_i\|_{L^\infty(\R^d)}} \|f_{n+1}\|_{L^2(\R^d)} \|f_{n+2}\|_{L^2(\R^d)}, \qquad \forall f_1,\ldots,f_{n+2}\in C_c^\infty(\R^d).
\end{equation}
\end{thm}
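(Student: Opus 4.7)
The plan is to apply the multilinear $T(1)$ theorem (\cref{thm:T(1)_n}) to the $(n+2)$-linear form $W$, treating it as a $\d'$-$(n+2)$ SIF for some $\d'\in(0,1)$ slightly smaller than the target $\d$. That theorem yields
\begin{equation*}
\|W\|\lesssim_{\d'}\sum_{i=1}^{n+2}\|W_i(1)\|_{BMO(\R^d)}+(n+2)^2\paren*{|W|_{\d'}+|W|_w},
\end{equation*}
and the goal is to bound each ingredient by $\prod_{k=1}^n\|f_k\|_{L^\infty(\R^d)}$ times a polynomial in $n$ of degree at most $1+\d'$, so that the right-hand side becomes $O(n^{2+\d'})$.

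\textbf{Verifying the SIF structure and the WBP.} For any pair $1\leq i<j\leq n+2$ and frozen arguments $f_k$ with $k\notin\{i,j\}$, I would identify the bilinear kernel of $W_{ij}$. The base case $\{i,j\}=\{n+1,n+2\}$ is transparent: the kernel is $K(x-y)\prod_{k=1}^n m_{x,y}f_k$, and since $|m_{x,y}f_k|\leq\|f_k\|_{L^\infty}$ and $|\nabla_{x,y}m_{x,y}f_k|\lesssim\|\nabla f_k\|_{L^\infty}$, interpolation between the Lipschitz and $L^\infty$ bounds on $m_{x,y}f_k$ yields H\"older control of any exponent $\d'<1$ with constant $\prod_k\|f_k\|_{L^\infty}$. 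Hence the combined kernel inherits the BSIF bounds of $K$ with multiplicative constant $\prod_k\|f_k\|_{L^\infty}$. The ``mixed'' cases where $i$ or $j$ lies in $\{1,\ldots,n\}$ are reduced to the base case by the change of variable $x'=tx+(1-t)y$ (or $y'=tx+(1-t)y$) inside the corresponding $m_{x,y}$ integral, which expresses the resulting kernel as a weighted average of dilated translates of $K$ and preserves the BSIF structure. An essentially identical calculation establishes the WBP: localization of $f_i,f_j$ at scale $t$ combined with the smoothing effect of the $P_t$ operators forces the kernel and the averaging factors to be almost constant at that scale, so \eqref{eq:WBP_n_ic} follows from the size bound $|K|_0\sim 1$. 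Both $|W|_{\d'}$ and $|W|_w$ therefore end up bounded by $O(1)\prod_{k=1}^n\|f_k\|_{L^\infty}$, with no essential growth in $n$.

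\textbf{BMO bounds on $W_i(1)$, by induction on $n$.} This is the main obstacle. For $i\in\{n+1,n+2\}$, setting $f_i=1$ reduces $W_i(1)$ to the action of $T$ (or its formal adjoint) on the smooth multiplier $\prod_{k=1}^n m_{x,y}f_k$; a paraproduct decomposition together with the standing hypothesis that $T$ is CZ and $L^2$-bounded (so that $T(1)\in BMO$) gives $\|W_i(1)\|_{BMO}\lesssim n\prod_{k=1}^n\|f_k\|_{L^\infty}$, the factor $n$ arising from summing over the $n$ Calder\'on slots. For $i\in\{1,\ldots,n\}$, setting $f_i=1$ \emph{removes} the $i$-th averaging factor entirely, since $m_{x,y}1=1$, producing a form of exactly the same type but with one fewer Calder\'on slot. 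By the inductive hypothesis, the associated $(n-1)$-commutator is bounded on $L^\infty\times\cdots\times L^\infty\times L^2\times L^2$ with constant $\lesssim_{\d'}(n-1)^{2+\d'}\prod_{k\neq i}\|f_k\|_{L^\infty}$, and a duality argument against $\mathscr{H}^1$-atoms (using the $\mathscr{H}^1$ estimate already built into the definition of boundedness) converts this to $\|W_i(1)\|_{BMO}\lesssim_{\d'}n^{1+\d'}\prod_{k\neq i}\|f_k\|_{L^\infty}$. The base case $n=0$ is the hypothesized $L^2$-boundedness of $T$.

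\textbf{Conclusion.} Substituting these bounds into \cref{thm:T(1)_n} gives
\begin{equation*}
\|W\|\lesssim_{\d'}(n+2)\cdot n^{1+\d'}\prod_{k=1}^n\|f_k\|_{L^\infty}+(n+2)^2\prod_{k=1}^n\|f_k\|_{L^\infty}\lesssim_{\d'}n^{2+\d'}\prod_{k=1}^n\|f_k\|_{L^\infty},
\end{equation*}
which, after relabeling $\d'=\d$, is the desired inequality. The delicate point is that the induction must close with exactly one extra power of $n$ per step; any inefficiency in the BMO-from-$L^2$ conversion would inflate the exponent from $2+\d$ to $2+2\d$ or worse, and the reason the bound of Christ--Journ\'e has exponent $2+\d$ rather than an absolute constant is exactly this one-power-per-step induction.
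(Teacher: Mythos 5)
First, a framing note: the paper does not prove this statement; it cites it as Theorem~3 of Christ--Journ\'e \cite{CJ1987} and explicitly notes that it relies on the original reference (and mentions more recent refinements by Seeger--Smart--Street and Lai). So there is no in-paper proof to compare against, only the original.

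Against the actual result, your argument has a fatal gap at the very first step, in the sentence ``interpolation between the Lipschitz and $L^\infty$ bounds on $m_{x,y}f_k$ yields H\"older control of any exponent $\d'<1$ with constant $\prod_k\|f_k\|_{L^\infty}$.'' This is false. Interpolating the $L^\infty$ bound $\|f_k\|_{L^\infty}$ against the Lipschitz bound $\|\nabla f_k\|_{L^\infty}$ yields a $\d'$-H\"older constant of order $\|f_k\|_{L^\infty}^{1-\d'}\|\nabla f_k\|_{L^\infty}^{\d'}$, which blows up with $\|\nabla f_k\|_{L^\infty}$ for any $\d'>0$ (take $f_k(x)=N^{-1/2}\sin(Nx)$ and let $N\to\infty$). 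Consequently, the kernel $K(x-y)\prod_k m_{x,y}f_k$ does \emph{not} satisfy the $\d'$-BSIF smoothness estimates \eqref{eq:BSIF_smooth_x}--\eqref{eq:BSIF_smooth_y} with constant depending only on $\prod_k\|f_k\|_{L^\infty}$, so $W$ is not a $\d'$-$(n+2)$ SIF in the sense required by \cref{thm:T(1)_n}, and that theorem cannot be invoked. If one could control $|W|_{\d'}$ by $\prod\|f_k\|_{L^\infty}$ alone, then even the $n=1$, $d=1$ case (the classical Calder\'on commutator) would be a routine corollary of the $T(1)$ theorem, which it is not; the entire difficulty of Calder\'on's theorem and of the Christ--Journ\'e generalization is precisely that the kernel loses a derivative's worth of smoothness in the Calder\'on slots. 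Christ and Journ\'e circumvent this via a Carleson-measure/Littlewood--Paley argument (their decomposition into scales and ``Carleson function'' criterion), not a direct application of the multilinear $T(1)$ theorem.

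A secondary issue: your reading of $W_i(1)$ for $i\in\{n{+}1,n{+}2\}$ is off. By the paper's definition, $U_i(1)$ substitutes $1$ in \emph{every} slot except the $i$-th, so all Calder\'on slots become $m_{x,y}1=1$ and $W_{n+1}(1), W_{n+2}(1)$ reduce to $T(1)$ and $T^*(1)$ with no dependence on the $f_k$'s; there is no ``smooth multiplier $\prod_k m_{x,y}f_k$'' left over. Likewise, the claimed inductive BMO-from-$L^2$ step for $i\le n$ doesn't close as stated, since $\|W_i(1)\|_{BMO}$ carries no $f_k$-dependence whatsoever. Your concluding paragraph correctly senses that something delicate controls the exponent $2+\d$, but the delicacy lies in the Carleson-measure decomposition, not in the bookkeeping of an induction inside the $T(1)$ framework.
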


\begin{remark}
As the reader can check, assuming $K$ has zero average over annuli, \cref{thm:CJ} implies by a density argument that the multilinear form $W$ has a well-defined extension on $(L^\infty(\R^d))^n\times (L^2(\R^d))^2$.
\end{remark}

We close this subsection by noting that highly nontrivial improvements to and generalizations of the Christ-Journ\'{e} result \cref{thm:CJ} have been given in the subsequent years by Seeger, Smart, and Street \cite{SSS2019} and Lai \cite{Lai2018}. Since we do not need such refinements for the purposes of this article, we have limited our attention to the original work \cite{CJ1987} of Christ and Journ\'{e}.

\subsection{The operators $T_{v}$ and $T_{\usig}$}
\label{ssec:T_sig}
We encountered in the introduction the operator $T_{\usig}$ defined by
\begin{equation}
\label{eq:app_Tsig_def}
T_{\usig} f(x) \coloneqq \int_{\R^2} K_{\usig}(x,y)f(y)dy, \qquad K_{\usig}(x,y) \coloneqq \sum_{k=1}^\infty \nabla^2\g(x-y) : (\sigma_k(x)-\sigma_k(y))^{\otimes 2}.
\end{equation}
The goal of this subsection is to show that such operators are smoothing of order two, in the sense that
\begin{equation}
\|\nabla T_{\usig}(\nabla f)\|_{L^2(\R^2;(\R^2)^{\otimes 2})} \lesssim_{\usig} \|f\|_{L^2(\R^2)} \qquad \forall f\in C_c^\infty(\R^2),
\end{equation}
and therefore $\nabla T_{\usig}\nabla$ extends to a bounded operator $L^2(\R^2)\rightarrow L^2(\R^2;(\R^2)^{\otimes 2})$. As part of our analysis, we also will show that operators of the form
\begin{equation}
T_{v}f(x) \coloneqq \int_{\R^2} K_{v}(x,y)f(y)dy, \qquad K_v(x,y)\coloneqq \nabla\g(x-y) \cdot \paren*{v(x)-v(y)},
\end{equation}
which, as we saw in the introduction to the article, appear in the It\^o equation \eqref{eq:ME_deriv} satisfied by the modulated energy $\Fr_N^{avg}(\ux_N(t),\xi(t))$, are also smoothing by two orders. To emphasize the second-order and first-order nature of the kernels defining $T_{\usig}$ and $T_v$, respectively, from hereafter we write $K_{2,\usig}, T_{2,\usig}$ and $K_{1,v}, T_{1,v}$.

\subsubsection{Warm-up: smoothing of $T_{1,v}$}
To warm up, we show that $T_{1,v}$ has the desired smoothing by two orders property using \cref{thm:CJ}.
\begin{prop}
\label{prop:T_1v}
Let $v\in C_c^\infty(\R^2)$. Then we have that
\begin{equation}
\|\nabla T_{1,v}(\nabla f)\|_{L^2(\R^2; (\R^2)^{\otimes 2})} \lesssim \|\nabla v\|_{L^\infty(\R^2)} \|f\|_{L^2(\R^2)} \qquad \forall f\in C_c^\infty(\R^2).
\end{equation}
Consequently, for any $\al,\beta \in \{1,2\}$, the form
\begin{equation}
(C_c^\infty(\R^2))^3\rightarrow\C, \qquad (v,f,g) \mapsto \ipp{g, \p_{\al} T_{1,v}(\p_{\beta} f)}
\end{equation}
has a bounded extension to $\text{Lip}(\R^2)\times (L^2(\R^2))^2$.
\end{prop}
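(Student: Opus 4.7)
By duality, it suffices to bound the trilinear form
\[
U_{\alpha\beta}(v,g,f) \coloneqq \ipp{g, \p_\alpha T_{1,v}(\p_\beta f)}
\]
for $f,g,v \in C_c^\infty(\R^2)$ by $\|\nabla v\|_{L^\infty}\|g\|_{L^2}\|f\|_{L^2}$; since $U_{\alpha\beta}$ depends on $v$ only through $\nabla v$, the extension to $\mathrm{Lip}(\R^2)\times (L^2(\R^2))^2$ then follows by mollifying $v\in\mathrm{Lip}$ and by standard $L^2$ density. My plan is to transfer both derivatives onto the kernel $K_{1,v}(x,y) = \nabla\g(x-y)\cdot(v(x)-v(y))$ by integrating by parts twice, once in $x$ and once in $y$. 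Because $v$ is Lipschitz, $K_{1,v}$ is bounded on $(\R^2)^2\setminus\D_2$, and its mixed derivative $\p_\alpha^x\p_\beta^y K_{1,v}$ is locally integrable: the $|x-y|^{-2}$ singularity of $\nabla^2\g$ is tamed to $|x-y|^{-1}$ by the factor $v(x)-v(y)$, which is locally integrable in two dimensions. A direct computation yields
\[
-\p_\beta^y\p_\alpha^x K_{1,v}(x,y) = \p_\alpha\p_\beta\nabla\g(x-y)\cdot(v(x)-v(y)) + \p_\alpha\nabla\g(x-y)\cdot\p_\beta v(y) + \p_\beta\nabla\g(x-y)\cdot\p_\alpha v(x),
\]
splitting $U_{\alpha\beta}$ into three pieces $\mathrm{Term}_I + \mathrm{Term}_{II} + \mathrm{Term}_{III}$.

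The last two ``easy'' pieces reduce at once to classical Calder\'on--Zygmund estimates. For instance, $\mathrm{Term}_{III} = \sum_i \int g(x)\p_\alpha v^i(x)(\p_\beta\p_i\g\ast f)(x)dx$, where $\p_\beta\p_i\g$ is a standard CZ convolution kernel on $\R^2$; the $L^2$-boundedness of CZ operators combined with H\"older's inequality gives $|\mathrm{Term}_{III}| \lesssim \|\nabla v\|_{L^\infty}\|g\|_{L^2}\|f\|_{L^2}$, and $\mathrm{Term}_{II}$ is handled symmetrically (with the roles of $f$ and $g$ interchanged, using evenness of $\p_\alpha\p_i\g$).

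The main term
\[
\mathrm{Term}_I = \sum_i \int_{(\R^2)^2} g(x)\p_\alpha\p_\beta\p_i\g(x-y)(v^i(x)-v^i(y))f(y)\, dx\, dy
\]
is where the Christ--Journ\'e machinery is essential. Using the fundamental theorem of calculus $v^i(x)-v^i(y) = \sum_k (x-y)^k m_{x,y}(\p_k v^i)$, with $m_{x,y}$ as in \eqref{eq:m_xy}, I would rewrite
\[
\mathrm{Term}_I = \sum_{i,k} \int_{(\R^2)^2} g(x)\tilde K_{ik}(x-y)m_{x,y}(\p_k v^i)f(y)\, dx\, dy, \qquad \tilde K_{ik}(z) \coloneqq \p_\alpha\p_\beta\p_i\g(z)\cdot z^k.
\]
Each $\tilde K_{ik}$ is $C^\infty$ off the origin and $(-2)$-homogeneous on $\R^2$; on the Fourier side its symbol is obtained by applying $i\p_{\xi_k}$ to a constant multiple of $\xi_\alpha\xi_\beta\xi_i/|\xi|^2$, producing a $0$-homogeneous symbol smooth on $S^1$. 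Hence $\tilde K_{ik}$ is the kernel of a bounded $L^2\to L^2$ CZ convolution operator, and each summand is a $3$-linear Christ--Journ\'e form with $n=1$. Invoking \cref{thm:CJ} with $f_1 = \p_k v^i \in L^\infty$ then delivers $|\mathrm{Term}_I|\lesssim \|\nabla v\|_{L^\infty}\|g\|_{L^2}\|f\|_{L^2}$.

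The key step, and the main obstacle, is verifying that $\tilde K_{ik}$ defines a bounded $L^2$ CZ convolution operator (equivalently, verifying the sphere-cancellation of its $(-2)$-homogeneous restriction); this can be done either via the Fourier computation above or by using the pointwise identity $\tilde K_{ik}(z) = \p_\alpha\p_\beta(\p_i\g(z)\, z^k) - \d_{k\alpha}\p_\beta\p_i\g(z) - \d_{k\beta}\p_\alpha\p_i\g(z)$ together with the $0$-homogeneity of $\p_i\g(z)\, z^k$. Once this boundedness is in hand, \cref{thm:CJ} does the heavy lifting, summing the three estimates gives $|U_{\alpha\beta}(v,g,f)|\lesssim \|\nabla v\|_{L^\infty}\|g\|_{L^2}\|f\|_{L^2}$, and the density extension from $C_c^\infty$ to $\mathrm{Lip}(\R^2)\times (L^2(\R^2))^2$ is routine.
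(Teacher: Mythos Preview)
Your approach is essentially the same as the paper's: integrate by parts twice onto the kernel, split the mixed derivative into the three pieces you identify, treat the cross terms by classical CZ plus H\"older, and cast the main term as a Calder\'on $d$-commutator via the fundamental-theorem-of-calculus identity $v(x)-v(y)=(x-y)_\rho\, m_{x,y}(\p_\rho v)$ so that \cref{thm:CJ} applies.

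There is, however, one genuine slip. You assert that $\p_\alpha^x\p_\beta^y K_{1,v}$ is locally integrable because ``the $|x-y|^{-2}$ singularity of $\nabla^2\g$ is tamed to $|x-y|^{-1}$.'' This is off by one order: the term $\p_\alpha\p_\beta\nabla\g(x-y)\cdot(v(x)-v(y))$ involves $\nabla^3\g\sim|x-y|^{-3}$, and the Lipschitz factor only brings this down to $|x-y|^{-2}$, which is \emph{not} locally integrable in $\R^2$. (Indeed, you yourself later note that $\tilde K_{ik}$ is $(-2)$-homogeneous and needs sphere cancellation, which is inconsistent with local integrability.) Consequently the double integration by parts must be performed over $\{|x-y|\ge\d\}$ and the limit $\d\to0^+$ taken; this produces nonzero boundary contributions on $\{|x-y|=\d\}$ that you have omitted. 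The paper carries this out and finds that the surviving boundary term is a sum of multiplication operators by bounded combinations of $\p v$, hence harmlessly controlled by $\|\nabla v\|_{L^\infty}\|f\|_{L^2}\|g\|_{L^2}$. So the final estimate is unchanged, but your justification of the IBP step needs to be replaced by the principal-value argument, and $\mathrm{Term}_I$ must be read as a PV integral before invoking \cref{thm:CJ}.
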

\begin{proof}
We first compute the Schwartz kernel of $\nabla T_{1,v}\nabla$ as a continuous linear mapping $\Sc(\R^2)\rightarrow \Sc'(\R^2;(\R^2)^{\otimes 2})$. Fixing two test functions $f,g$ and indices $\al,\be\in\{1,2\}$, we have by duality that
\begin{align}
\ipp{f,\p_\al T_{1,v}(\p_\be g)} &= -\ipp{\p_{\al}f, T_{1,v}(\p_{\be}g)} =-\lim_{\d\rightarrow 0^+} \int_{|x-y|\geq\d} K_{1,v}(x,y) \p_{\al}f(x)\p_{\be}g(y)dxdy,
\end{align}
where the ultimate equality follows from unpacking the definition of $T_{1,v}$ and applying dominated convergence. Integrating by parts once in both $x$ and $y$, we find that the right-hand side equals
\begin{equation}
\begin{split}
&\lim_{\d\rightarrow 0^+}\int_{|x-y|=\d} \frac{(x-y)_\al}{|x-y|} K_{1,v}(x,y) f(x)\p_\be g(y)d\H^3(x,y) \\
&+\lim_{\d\rightarrow 0^+} \int_{|x-y|= \d}\frac{(x-y)_\be}{|x-y|} \p_{x_\al}K_{1,v}(x,y)f(x)g(y)d\H^3(x,y) \\
&-\lim_{\d\rightarrow 0^+}\int_{|x-y|\geq\d} \p_{x_\al}\p_{y_\be}K_{1,v}(x,y)f(x)g(y)dxdy,
\end{split}
\end{equation}
where $\H^d$ denotes the $d$-dimensional Hausdorff measure. By direct estimation, we see that the first term vanishes. For the second term, we fist observe that
\begin{equation}
\p_{x_\al} K_{1,v}(x,y) = -\frac{1}{2\pi}\paren*{\frac{\d_{\al\ga}}{|x-y|^2} - \frac{2(x-y)_\al(x-y)_\ga}{|x-y|^4}}\paren*{v(x)-v(y)}^\ga - \frac{(x-y)_\ga\p_\al v^\ga(x)}{2\pi |x-y|^2},
\end{equation}
where we have implicitly used the convention of Einstein summation. So by dilation and translation invariance and Fubini-Tonelli,
\begin{equation}
\begin{split}
&-\int_{|x-y|=\d}\frac{(x-y)_\be}{|x-y|}\p_{x_\al}K_{1,v}(x,y)f(x)g(y)d\H^3(x,y)\\
 &= \frac{1}{2\pi}\int_{\R^2}dy g(y) \int_{\p B(0,1)} z_\be z\cdot\p_\al v(y+\d z) f(y+\d z) d\H^1(z) \\
&\ph +\frac{1}{2\pi}\int_{\R^2}dy g(y)\int_{\p B(0,1)}z_\be(\d_{\al\ga}-2z_\al z_\ga)z^\rho m_{y+\d z,y}(\p_\rho v^{\ga}) f(y+\d z)d\H^1(z).
\end{split}
\end{equation}
As $\d\rightarrow 0^+$, dominated convergence implies that the preceding right-hand side converges to
\begin{equation}
C_{\be\ga}\ipp{g,\p_{\al}v^\ga f} + C_{\be\ga}^{\al\rho}\ipp{g,\p_{\rho}v^\ga f},
\end{equation}
where $C_{\be\ga}, C_{\be\ga}^{\al\rho}$ are constants defined by
\begin{equation}
C_{\be\ga} \coloneqq -\frac{1}{2\pi}\int_{\p B(0,1)}z_\be z_\ga d\H^1(z), \qquad C_{\be\ga}^{\al\rho} \coloneqq -\frac{1}{2\pi}\int_{\p B(0,1)} z_{\be}(\d_{\al\ga}-2z_{\al}z_{\ga})z^{\rho}d\H^1(z).
\end{equation}
Thus, after a little a bookkeeping, we conclude that
\begin{equation}
\label{eq:T_1v_PV}
\ipp{f,\p_\al T_{1,v}(\p_\be g)} = C_{\be\ga}\ipp{f,\p_{\al}v^\ga g} + C_{\be\ga}^{\al\rho}\ipp{f,\p_{\rho}v^{\ga} g}-\PV\int_{(\R^2)^2} \p_{x_\al}\p_{y_\be}K_{1,v}(x,y)f(x)g(y)dxdy.
\end{equation}

Since multiplication by $\p_{\al}v^{\ga},\p_{\rho}v^{\ga}$ is $L^2$-bounded by H\"{o}lder's inequality, we only need to show that the principal value term in identity \eqref{eq:T_1v_PV} defines a bounded trilinear form. We now check that $-\p_{x_\al}\p_{y_\be}K_{1,v}$ may be put in the form satisfying the conditions of \cref{thm:CJ}. To this end, observe the identity
\begin{equation}
\begin{split}
&2\pi\p_{x_\al}\p_{y_\be}K_{1,v}(x,y)\\
&= \paren*{-\frac{2(\d_{\al\ga}(x-y)_{\be} + \d_{\al\be}(x-y)_\ga + \d_{\ga\be}(x-y)_\al)}{|x-y|^4} + \frac{8(x-y)_\al(x-y)_\ga(x-y)_\be}{|x-y|^6}}\paren*{v(x)-v(y)}^{\ga} \\
&\ph + \paren*{\frac{\d_{\al\ga}}{|x-y|^2} -\frac{2(x-y)_\al(x-y)_\ga}{|x-y|^4}} \p_\be v^\ga(y) + \paren*{\frac{\d_{\ga\be}}{|x-y|^2} - \frac{2(x-y)_\ga(x-y)_\be}{|x-y|^4}}\p_{\al}v^\ga(x),
\end{split}
\end{equation}
which is valid for $x\neq y\in\R^2$ and follows by direct computation. Note that by the fundamental theorem of calculus, the expression in the second line may be written as
\begin{equation}
\label{eq:T_1v_CJ}
K_{1,v, \al\be\ga}^{(2),\rho}(x-y) m_{x,y}(\p_{\rho}v^\ga),
\end{equation}
where the reader will recall the definition of $m_{x,y}$ from \eqref{eq:m_xy} and
\begin{equation}
K_{1,v,\al\be\ga}^{(2),\rho}(z) \coloneqq -2z_\rho\paren*{\frac{\d_{\al\ga}z_\be + \d_{\al\be}z_\ga + \d_{\ga\be}z_\al}{|z|^4} - \frac{4z_\al z_\be z_\ga}{|z|^6}} \qquad \forall z\neq 0.
\end{equation}
Similarly, the expression in the third line may be written as
\begin{equation}
K_{1,v,\al\be\ga}^{(1)}(x-y)\p_{\be}v^\ga(y) + K_{1,v,\be\al\ga}^{(1)}(x-y)\p_{\al}v^\ga(x),
\end{equation}
where
\begin{equation}
K_{1,v,\al'\be'\ga'}^{(1)}(z) \coloneqq \frac{\d_{\al'\ga'}}{|z|^2} -\frac{2z_{\al'}z_{\ga'}}{|z|^4}, \qquad \forall z\neq 0.
\end{equation}
$K_{1,v,\al'\be'\ga'}^{(1)}$ and $K_{1,v,\al'\be'\ga'}^{(2),\rho'}$ are even, homogeneous of degree $-2$ kernels, which are smooth on the sphere $S^1$. Moreover, it is a tedious, but not hard, exercise to show using trigonometric identities that they have zero average on $S^1$. So by \cite[Theorem 5.2.10]{grafakos2014c}, they define $L^2$-bounded Calder\'{o}n-Zygmund operators of convolution type. Therefore, the singular integral form defined by \eqref{eq:T_1v_CJ} is bounded on $L^2(\R^2)$ by \cref{thm:CJ}. Since multiplication by $\p_{\be}v^\ga$ or $\p_{\al}v^\ga$ is also $L^2$-bounded by H\"older's inequality, the proof of the proposition is complete after a little bookkeeping.
\end{proof}

\subsubsection{Smoothing of $T_{2,\usig}$}
We now proceed to showing that the operator $T_{2,\usig}$ has the desired order-two smoothing property. We begin by computing the Schwartz kernel of $\nabla T_{2,v}\nabla$, for any fixed smooth vector field $v$, as a continuous linear mapping $\Sc(\R^2)\rightarrow\Sc'(\R^2;(\R^2)^{\otimes 2})$. Before proceeding to this computation, we record useful identities for the first- and second-order partial derivatives of the kernel $K_{2,v}$.

\begin{lemma}
\label{lem:Ksig_PDs}
For $\al',\beta'\in\{1,2\}$ and $x\neq y\in\R^2$, we have the point-wise identities
\begin{equation}
\label{eq:Ksig_1PD}
\begin{split}
2\pi\p_{x_{\al'}}K_{2,v}(x,y) &= \paren*{\frac{2(\d_{\al\beta}(x-y)_{\al'} + \d_{\al\al'}(x-y)_{\beta}+\d_{\al'\beta}(x-y)_{\al})}{|x-y|^4} - \frac{8(x-y)_{\al}(x-y)_{\beta}(x-y)_{\al'}}{|x-y|^6}} \\
&\hspace{25mm} (v(x)-v(y))^\al(v(x)-v(y))^\beta \\
&\ph + \paren*{-\frac{\d_{\al\beta}}{|x-y|^2}+\frac{2(x-y)_\al(x-y)_{\beta}}{|x-y|^4}}\paren*{\p_{\al'}v^\al(x)(v(x)-v(y))^{\beta} + (v(x)-v(y))^{\al}\p_{\al'}v^\beta(x)}
\end{split}
\end{equation}
and
\begin{equation}
\label{eq:Ksig_2PD}
\begin{split}
-2\pi\p_{x_{\al'}}\p_{y_{\beta'}}K_{\sigma}(x,y) &= \Bigg(\frac{2(\d_{\al\be}\d_{\al'\be'}+\d_{\al\al'}\d_{\be\be'}+\d_{\al'\be}\d_{\al\be'})}{|x-y|^4}-\frac{8\d_{\al\be}(x-y)_{\al'}(x-y)_{\be'}}{|x-y|^6}  \\
&\hspace{15mm} -\frac{8(\d_{\al\al'}(x-y)_{\be}+\d_{\al'\be}(x-y)_\al)(x-y)_{\be'}}{|x-y|^6} \\
&\hspace{15mm} -\frac{8(\d_{\al\be'}(x-y)_{\be}(x-y)_{\al'}+\d_{\be\be'}(x-y)_{\al}(x-y)_{\al'}+\d_{\al'\be'}(x-y)_{\al}(x-y)_{\be})}{|x-y|^6} \\
&\hspace{15mm} + \frac{48(x-y)_{\al}(x-y)_{\be}(x-y)_{\al'}(x-y)_{\be'}}{|x-y|^8}\Bigg) (v(x)-v(y))^\al (v(x)-v(y))^\be \\
&\ph +  \Bigg(\frac{2(\d_{\al\be}(x-y)_{\al'} + \d_{\al\al'}(x-y)_\be + \d_{\al'\be}(x-y)_\al)}{|x-y|^4} - \frac{8(x-y)_\al(x-y)_\be(x-y)_{\al'}}{|x-y|^6}\Bigg) \\
&\hspace{15mm} \paren*{\p_{\be'}v^\al(y)(v(x)-v(y))^\be + \p_{\be'}v^\be(y)(v(x)-v(y))^\al} \\
&\ph + \Bigg(\frac{2(\d_{\al\be}(x-y)_{\be'} + \d_{\al\be'}(x-y)_\be + \d_{\be'\be}(x-y)_\al)}{|x-y|^4} - \frac{8(x-y)_\al(x-y)_\be(x-y)_{\be'}}{|x-y|^6}\Bigg) \\
&\hspace{15mm} \paren*{\p_{\al'}v^\al(x)(v(x)-v(y))^\be + \p_{\al'}v^\be(x)(v(x)-v(y))^\al} \\
&\ph + \paren*{-\frac{\d_{\al\be}}{|x-y|^2}+\frac{2(x-y)_{\al}(x-y)_{\be}}{|x-y|^4}} \paren*{\p_{\al'}v^\al(x)\p_{\be'}v^\be(y) + \p_{\be'}v^\al(y)\p_{\al'}v^\be(x) }.
\end{split}
\end{equation}
\end{lemma}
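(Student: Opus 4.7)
The lemma is a direct computation, essentially an exercise in the product rule together with the explicit formula for the Hessian of the Coulomb potential. I would organize the bookkeeping as follows.

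First, I would record the baseline formula
\begin{equation*}
\p_{z_\al}\p_{z_\be}\g(z) = -\frac{1}{2\pi}\paren*{\frac{\d_{\al\be}}{|z|^2} - \frac{2z_\al z_\be}{|z|^4}}, \qquad z\neq 0,
\end{equation*}
so that, using Einstein summation,
\begin{equation*}
2\pi K_{2,v}(x,y) = \paren*{-\frac{\d_{\al\be}}{|x-y|^2} + \frac{2(x-y)_\al(x-y)_\be}{|x-y|^4}}\paren*{v(x)-v(y)}^\al\paren*{v(x)-v(y)}^\be,
\end{equation*}
and then compute
\begin{equation*}
\p_{z_{\al'}}\paren*{-\frac{\d_{\al\be}}{|z|^2}+\frac{2z_\al z_\be}{|z|^4}} = \frac{2(\d_{\al\be}z_{\al'}+\d_{\al\al'}z_\be+\d_{\al'\be}z_\al)}{|z|^4} - \frac{8z_\al z_\be z_{\al'}}{|z|^6}.
\end{equation*}
This will serve as the kernel factor appearing in every subsequent expression.

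Next, for the identity \eqref{eq:Ksig_1PD}, I would apply $\p_{x_{\al'}}$ to the product representation of $2\pi K_{2,v}$ by the product rule. The derivative of the kernel factor contributes the first group of terms, exactly matching the first parenthesized display above multiplied by $(v(x)-v(y))^\al (v(x)-v(y))^\be$. Since the two $(v(x)-v(y))$ factors are interchangeable under $\al \leftrightarrow \be$, differentiating them gives two copies which combine into the symmetric expression $\p_{\al'}v^\al(x)(v(x)-v(y))^\be + (v(x)-v(y))^\al \p_{\al'}v^\be(x)$, multiplied by the original kernel. This matches \eqref{eq:Ksig_1PD}.

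For the identity \eqref{eq:Ksig_2PD}, I would next apply $\p_{y_{\be'}}$ to \eqref{eq:Ksig_1PD}, again by the product rule. There are three groups of terms in \eqref{eq:Ksig_1PD}, and differentiating each gives:
\begin{enumerate}
\item From the ``pure kernel'' term: differentiating the third-order kernel factor in $y$ picks up an overall minus sign (since $\p_{y_{\be'}}\g(x-y) = -\p_{x_{\be'}}\g(x-y)$) and produces the fourth-order kernel piece -- this gives the big expression with the $|x-y|^{-8}$ term and lower-order companions. Differentiating either $(v(x)-v(y))^\al$ or $(v(x)-v(y))^\be$ in $y$ contributes a $-\p_{\be'}v(y)$ factor against the third-order kernel piece; symmetrizing over $\al \leftrightarrow \be$ accounts for the next group of terms in \eqref{eq:Ksig_2PD}.
\item From the two $\p_{\al'}v(x)$ terms: since these factors do not depend on $y$, the derivative $\p_{y_{\be'}}$ acts only on the kernel factor or the remaining $(v(x)-v(y))$ factor. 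The kernel derivative (with its sign flip) produces the group with $\p_{\al'}v(x)(v(x)-v(y))$ that appears in \eqref{eq:Ksig_2PD}, while the $(v(x)-v(y))$ derivative gives $-\p_{\be'}v(y)$ against the original Hessian, yielding the final $\p_{\al'}v^\al(x)\p_{\be'}v^\be(y) + \p_{\be'}v^\al(y)\p_{\al'}v^\be(x)$ group.
\end{enumerate}
Collecting all terms and carefully tracking the overall sign (the lemma states $-2\pi\p_{x_{\al'}}\p_{y_{\be'}}K_{\sigma}$ on the left) yields \eqref{eq:Ksig_2PD}. The only real obstacle is combinatorial bookkeeping — making sure every index and sign is accounted for — but there are no analytic subtleties, since all computations are pointwise for $x\neq y$ where everything is smooth.
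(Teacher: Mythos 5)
Your proposal is correct and takes the only reasonable approach — direct computation by the product rule starting from the explicit Hessian of $\g$. The paper in fact omits a written proof of this lemma entirely, treating it as a routine calculation; your outline correctly organizes that calculation, the baseline Hessian and kernel-derivative formulas you record are accurate, and the bookkeeping of signs (the $\p_{y_{\be'}}$ sign flips, combined with the overall minus on the left-hand side of \eqref{eq:Ksig_2PD}) and of the $\al\leftrightarrow\be$ symmetrization is handled correctly, so it reproduces \eqref{eq:Ksig_1PD} and \eqref{eq:Ksig_2PD} term by term.
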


\begin{lemma}
\label{lem:Tsig_SK}
For any $f,g\in \Sc(\R^2)$ and $\al',\beta'\in \{1,2\}$, we have that
\begin{equation}
\begin{split}
\ipp{f,\p_{\al'}T_{2,v}(\p_{\beta'}g)} &= \ipp{f,C_{\al\beta\al'}^{\gamma\gamma'\be'} \p_{\gamma}v^\al\p_{\gamma'}v^\beta g} + \ipp{f, C_{\al\be}^{\ga\be'}(\p_{\al'}v^{\al}\p_{\ga}v^{\be} + \p_{\al'}v^{\be}\p_{\ga}v^{\al})g}\\
&\ph -\PV\int_{(\R^2)^2}\p_{x_{\al'}}\p_{y_{\be'}}K_{2,v}(x,y)f(x)g(y)dxdy,
\end{split}
\end{equation}
where $C_{\al\beta\al'}^{\gamma\gamma'\be'}, C_{\al\be}^{\ga\be'}$ are real constants and we use the convention of Einstein summation.
\end{lemma}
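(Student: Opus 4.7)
The plan is to follow the template of \cref{prop:T_1v}: pass to a regularized form via duality, then apply iterated integration by parts, then analyze the two resulting surface limits as $\d\to 0^+$. Since $|K_{2,v}(x,y)| \lesssim \|\nabla v\|_{L^\infty(\R^2)}^2$, which follows from $|\nabla^2\g(x-y)| \lesssim |x-y|^{-2}$ together with $|(v(x)-v(y))^{\otimes 2}| \lesssim \|\nabla v\|_{L^\infty(\R^2)}^2 |x-y|^2$, dominated convergence will give
\begin{equation*}
\ipp{f,\p_{\al'}T_{2,v}(\p_{\be'}g)} = -\ipp{\p_{\al'}f,T_{2,v}(\p_{\be'}g)} = -\lim_{\d\to 0^+}\int_{|x-y|\geq \d}K_{2,v}(x,y)\p_{\al'}f(x)\p_{\be'}g(y)\,dxdy.
\end{equation*}
I will then integrate by parts once in $x_{\al'}$ (with $y$ fixed, outward normal $-(x-y)/|x-y|$ on the inner boundary $|x-y|=\d$) and once in $y_{\be'}$ (with $x$ fixed, outward normal $(x-y)/|x-y|$) in the remaining volume term, producing two surface contributions on $\{|x-y|=\d\}$ together with the principal-value integral $-\PV\int\p_{x_{\al'}}\p_{y_{\be'}}K_{2,v}(x,y)f(x)g(y)\,dxdy$, which is precisely the third term of the claim.

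Next I will analyze the two surface limits via the substitution $x = y + \d z$, $z \in \p B(0,1)$, so that $d\H^1(x)=\d\,d\H^1(z)$. The first surface limit vanishes: using the refined bound $|K_{2,v}(y+\d z, y)| \lesssim \|\nabla v\|_{L^\infty(\R^2)}^2 \d^2$, the spherical integrand is $O(\d^2)$, which together with the factor of $\d$ from the surface measure gives a net $O(\d^3)$. For the second surface limit, I will substitute the explicit formula \eqref{eq:Ksig_1PD} for $\p_{x_{\al'}}K_{2,v}$ and use the fundamental-theorem-of-calculus representation $(v(x)-v(y))^\ga = (x-y)^{\ga'} m_{x,y}(\p_{\ga'} v^\ga)$ exactly as in the proof of \cref{prop:T_1v}. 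The $|x-y|^{-3}$ piece of \eqref{eq:Ksig_1PD} carries the $(v(x)-v(y))^{\otimes 2}$ factor, contributing $\d^{-3}\cdot \d^2 = \d^{-1}$ after Taylor expansion, while the $|x-y|^{-2}$ piece carries the $(v(x)-v(y))$ factor, contributing $\d^{-2}\cdot \d = \d^{-1}$; both $\d^{-1}$'s are balanced by the $\d$ from $d\H^1(y)$. Dominated convergence together with $m_{x,x-\d z}(\p_\ga v^\al) \to \p_\ga v^\al(x)$ will then produce the two quadratic-in-$\nabla v$ terms of the claim, with the constants expressible as explicit angular integrals over $\p B(0,1)$ such as
\begin{equation*}
C_{\al\be}^{\ga\be'} = -\tfrac{1}{2\pi}\int_{\p B(0,1)} z_{\be'}(\d_{\al\be} - 2z_\al z_\be) z_\ga \,d\H^1(z),
\end{equation*}
and analogously for $C_{\al\be\al'}^{\ga\ga'\be'}$. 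The symmetrized form $\p_{\al'}v^\al \p_\ga v^\be + \p_{\al'}v^\be \p_\ga v^\al$ appears directly because the bracketed factor in \eqref{eq:Ksig_1PD} is already written as an $\al\leftrightarrow\be$-symmetric sum.

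The hard part will be the tensor-algebra bookkeeping in the second surface limit: \eqref{eq:Ksig_1PD} is a sum of several terms with distinct tensor structures and different homogeneities in $|x-y|$, and each must be matched with the appropriate Taylor order of $v(x)-v(y)$ so that the net power of $\d$ cancels. Several of the resulting angular integrals over $\p B(0,1)$ must vanish by parity (odd monomials in $z$), which is essential for obtaining precisely the two-term form of the claim and serves as a consistency check on the computation.
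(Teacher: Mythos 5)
Your approach matches the paper's proof essentially step for step: duality plus dominated convergence using the $O(1)$ bound on $K_{2,v}$, iterated integration by parts producing two surface terms and the principal value, vanishing of the first surface term, and evaluation of the second via the change of variable $x=y+\d z$, the formula \eqref{eq:Ksig_1PD}, and the fundamental theorem of calculus, with the constants emerging as angular integrals over $S^1$. One small slip: you claim the refined bound $|K_{2,v}(y+\d z,y)|\lesssim \|\nabla v\|_{L^\infty}^2\d^2$, but since $|\nabla^2\g(\d z)|\sim \d^{-2}$ exactly cancels the $\d^2$ from $(v(y+\d z)-v(y))^{\otimes 2}$, the kernel is in fact only $O(1)$ on the sphere of radius $\d$; the first surface term therefore vanishes at rate $O(\d)$ rather than $O(\d^3)$, coming solely from the factor $\d$ contributed by the surface measure $d\H^3$ on $\{|x-y|=\d\}$, which is exactly what the paper uses and is still sufficient.
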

\begin{proof}
Let $f,g\in \Sc(\R^2)$. Proceeding by duality, we have that for any $\al',\beta'\in\{1,2\}$,
\begin{align}
\ipp{f,\p_{\al'}T_{2,v}(\p_{\beta'}g)} &= -\ipp{\p_{\al'}f, T_{2,v}(\p_{\beta'}g)} \nn\\
&=-\lim_{\d\rightarrow0^+} \int_{|x-y|\geq \d} K_{2,v}(x,y) \p_{\al'}f(x)\p_{\be'}g(y)dxdy,
\end{align}
where the ultimate equality is by dominated convergence. Integrating by parts first in $y$ and then in $x$, we find that the preceding expression equals
\begin{align}
&\lim_{\d\rightarrow 0^+} \int_{|x-y|=\d} \frac{(x-y)_{\al'}}{|x-y|} K_{2,v}(x,y)f(x)\p_{\beta'}g(y)d\H^3(x,y) \label{eq:SK_I}\\
&+\lim_{\d\rightarrow 0^+}\int_{|x-y|=\d}\frac{(x-y)_{\be'}}{|x-y|}\p_{x_{\al'}}K_{2,v}(x,y)f(x)g(y)d\H^3(x,y) \label{eq:SK_II} \\
&-\lim_{\d\rightarrow 0^+} \int_{|x-y|\geq \d}\p_{x_{\al'}}\p_{y_{\beta'}}K_{2,v}(x,y)f(x)g(y)dxdy, \label{eq:SK_III}
\end{align}
where the reader will recall that $\H^d$ denotes the $d$-dimensional Hausdorff measure.

It is straightforward to check that \eqref{eq:SK_I} equals zero. Applying \cref{lem:Ksig_PDs}, a change of variable, and the Fubini-Tonelli theorem, we have that
\begin{equation}
\begin{split}
\eqref{eq:SK_II} &= \frac{1}{2\pi}\int_{\R^2}dy g(y)\int_{S^1}d\H^1(z)f(y+\d z)z_{\be'}\paren*{2(\d_{\al\beta}z_{\al'}+\d_{\al\al'}z_{\beta} + \d_{\al'\beta}z_{\al}) -8z_{\al}z_{\beta}z_{\al'}}z^{\ga}z^{\ga'}\\
&\ph\hspace{45mm}  m_{y+\d z,y}(\p_\ga v^\al)m_{y+\d z,y}(\p_{\ga'}v^{\be}) \\
&\ph + \frac{1}{2\pi}\int_{\R^2}dy g(y)\int_{S^1}d\H^1(z)f(y+\d z)z_{\be'}(-\d_{\al\be}+2z_{\al}z_{\be})z^{\ga} \\
&\ph\hspace{45mm}\paren*{(\p_{\al'}v^{\al})(y+\d z)m_{y+\d z,y}(\p_{\ga}v^{\be}) + (\p_{\al'}v^{\be})(y+\d z)m_{y+\d z,y}(\p_{\ga}v^{\al})},
\end{split}
\end{equation}
where we also use the fundamental theorem of calculus applied to the $v$. By the dominated convergence theorem, this last expression tends, as $\d\rightarrow 0^+$, to
\begin{equation}
C_{\al\beta\al'}^{\gamma\gamma'\be'}\int_{\R^2}f(y)g(y)\p_{\gamma}v^\al(y)\p_{\gamma'}v^{\beta}(y)dy + C_{\al\be}^{\ga\be'}\int_{\R^2}f(y)g(y)\paren*{\p_{\al'}v^{\al}(y)\p_{\ga}v^{\be}(y) + \p_{\al'}v^{\be}(y)\p_{\ga}v^{\al}(y)}dy,
\end{equation}
where $C_{\al\beta\al'}^{\gamma\gamma'\be'}, C_{\al\be}^{\ga\be'}$ are the constants defined by
\begin{align}
C_{\al\beta\al'}^{\gamma\gamma'\be'} &\coloneqq \frac{1}{2\pi}\int_{S^1}\paren*{2(\d_{\al\beta}z_{\al'}+\d_{\al\al'}z_{\beta} + \d_{\al'\beta}z_{\al}) -8z_{\al}z_{\beta}z_{\al'}}z_{\gamma}z_{\gamma'}z_{\be'}d\H^1(z), \\
C_{\al\be}^{\ga\be'} &\coloneqq \frac{1}{2\pi}\int_{S^1}(-\d_{\al\be}+2z_{\al}z_{\be})z_{\be'}z_{\ga}d\H^1(z).
\end{align}
Thus,
\begin{equation}
\eqref{eq:SK_II} = \ipp{f, C_{\al\beta\al'}^{\gamma\gamma'\be'}\p_\gamma v^\al\p_{\gamma'}v^\beta g} + \ipp{f, C_{\al\be}^{\ga\be'}(\p_{\al'}v^{\al}\p_{\ga}v^{\be} + \p_{\al'}v^{\be}\p_{\ga}v^{\al})g}.
\end{equation}

After a little bookkeeping, we conclude that
\begin{equation}
\begin{split}
\ipp{f,\p_{\al'}T_{2,v}(\p_{\beta'}g)} &= \ipp{f,C_{\al\beta\al'}^{\gamma\gamma'\be'} \p_{\gamma}v^\al\p_{\gamma'}v^\beta g} + \ipp{f, C_{\al\be}^{\ga\be'}(\p_{\al'}v^{\al}\p_{\ga}v^{\be} + \p_{\al'}v^{\be}\p_{\ga}v^{\al})g}\\
&\ph -\lim_{\d\rightarrow 0^+}\int_{|x-y|\geq \d}\p_{x_{\al'}}\p_{y_{\beta'}}K_{2,v}(x,y)f(x)g(y)dxdy,
\end{split}
\end{equation}
which completes the proof of the lemma.
\end{proof}

By H\"older's inequality, multiplication by $\p_{\ga}v^\al \p_{\ga'}v^\be$ is bounded on $L^2(\R^2)$. So from \cref{lem:Tsig_SK}, we see that in order to prove $\nabla T_{2,v}\nabla$ is $L^2$-bounded, we need to show that the principal value operator
\begin{equation}
\label{eq:PV_op}
-\PV \int_{(\R^2)^2} \p_{x_{\al'}}\p_{y_{\be'}}K_{2,v}(x,y)g(y)dy
\end{equation}
defines a bounded operator on $L^2(\R^2)$, for any indices $\al',\be'\in\{1,2\}$. To do this, we want to use \cref{thm:CJ}.

Using the identity from \cref{lem:Ksig_PDs} for $-\p_{x_{\al'}}\p_{y_{\be'}}K_{2,v}(x,y)$ and the fundamental theorem of calculus, we may write for $x\neq y$ that 
\begin{equation}
\begin{split}
-\p_{x_{\al'}}\p_{y_{\be'}}K_{2,v}(x,y) &= K_{2,v,\al\be\al'\be'}^{(2),\gamma\gamma'}(x-y) m_{x,y}(\p_\gamma v^\al) m_{x,y}(\p_{\gamma'}v^\be) \\
&\ph +  K_{2,v,\al\be\al'}^{(1),\ga'}(x-y) m_{x,y}(\p_{\ga'}v^\be)\p_{\be'}v^\al(y) + K_{2,v,\al\be\al'}^{(1),\ga}(x-y) m_{x,y}(\p_\ga v^\al)\p_{\be'}v^\be(y) \\
&\ph +  K_{2,v,\al\be\be'}^{(1), \ga'}(x-y)\p_{\al'}v^\al(x)m_{x,y}(\p_{\ga'}v^\be) + K_{2,v,\al\be\be'}^{(1), \ga}(x-y)\p_{\al'} v^\be(x)m_{x,y}(\p_\ga v^\al) \\
&\ph +  K_{2,v,\al\be}^{(0)}(x-y)\paren*{\p_{\al'}v^\al(x)\p_{\be'}v^\be(y)+\p_{\be'}v^\al(y)\p_{\al'}v^\be(x)}
\end{split}
\end{equation}
where we use Einstein summation and for $z\neq 0$,
\begin{equation}
\begin{split}
K_{2, v, \al\be\al'\be'}^{(2),\ga\ga'}(z) &= \frac{1}{2\pi} \Bigg(\frac{2(\d_{\al\be}\d_{\al'\be'}+\d_{\al\al'}\d_{\be\be'}+\d_{\al'\be}\d_{\al\be'})}{|z|^4}-\frac{8(\d_{\al\be}z_{\al'}z_{\be'}+\d_{\al\al'}z_{\be}z_{\be'}+\d_{\al'\be}z_\al z_{\be'})}{|z|^6}   \\
&\hspace{20mm} -\frac{8(\d_{\al\be'}z_{\be}z_{\al'}+\d_{\be\be'}z_{\al}z_{\al'}+\d_{\al'\be'}z_{\al}z_{\be})}{|z|^6} + \frac{48z_{\al}z_{\be}z_{\al'}z_{\be'}}{|z|^8}\Bigg)z_\ga z_{\ga'},
\end{split}
\end{equation}
\begin{equation}
\begin{split}
K_{2,v,\al\be\rho}^{(1),\nu}(z) &= \frac{1}{2\pi}\Bigg(\frac{2(\d_{\al\be}z_{\rho}+\d_{\al\rho}z_\be + \d_{\rho\be}z_\al)}{|z|^4}  - \frac{8z_\al z_\be z_{\rho}}{|z|^6}\Bigg) z_{\nu},
\end{split}
\end{equation}
and
\begin{equation}
\begin{split}
K_{2,v,\al\be}^{(0)}(z) &= \frac{1}{2\pi}\Bigg(-\frac{\d_{\al\be}}{|z|^2}+\frac{2z_\al z_\be}{|z|^4}\Bigg).
\end{split}
\end{equation}
Each of the kernels $K_{2,v,\al\be\al'\be'}^{(2), \ga\ga'}, K_{2,v,\al\be\rho}^{(1),\nu}, K_{2,v,\al\be}^{(0)}$ previously defined is even, homogeneous of degree $-2$ and smooth on $S^1$. Moreover, each has zero average on $S^1$. It then follows from \cite[Theorem 5.2.10]{grafakos2014c} that the associated Calder\'{o}n-Zygmund convolution operators are bounded on $L^2(\R^2)$.

We now have all the necessary ingredients to use \cref{thm:CJ} to show that $-\p_{x_\al}\p_{y_\be}K_\sigma(x,y)$ is the kernel of an $L^2$ bounded principal value operator. The following proposition now follows, as the reader may check, after a little bookkeeping of our preceding work.

\begin{prop}
\label{prop:T_2v}
Let $v\in C_c^\infty(\R^2)$. Then we have that
\begin{equation}
\|\nabla T_{2,v}(\nabla f)\|_{L^2(\R^2;(\R^2)^{\otimes 2})} \lesssim \|\nabla v\|_{L^\infty(\R^2)}^2 \|f\|_{L^2(\R^2)} \qquad \forall f\in C_c^\infty(\R^2).
\end{equation}
Consequently, for any $\al,\be\in\{1,2\}$, the form
\begin{equation}
(C_c^\infty(\R^2))^3 \rightarrow \C, \qquad (v,f,g) \mapsto \ipp{g,\p_\al T_{2,v}(\p_\be f)}
\end{equation}
has a bounded extension to $\text{Lip}(\R^2)\times (L^2(\R^2))^2$.
\end{prop}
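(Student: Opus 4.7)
The plan is to reduce the boundedness of $\nabla T_{2,v}\nabla$ on $L^2(\R^2)$ to an application of the Christ--Journé theorem (\cref{thm:CJ}), using the Schwartz-kernel identity of \cref{lem:Tsig_SK} as the starting point. By that lemma, for $f,g\in C_c^\infty(\R^2)$ and any indices $\al',\be'\in\{1,2\}$,
\begin{equation*}
\ipp{f,\p_{\al'}T_{2,v}(\p_{\be'}g)} = \ipp{f,C_{\al\be\al'}^{\ga\ga'\be'}\p_\ga v^\al\p_{\ga'}v^\be g} + \ipp{f,C_{\al\be}^{\ga\be'}(\p_{\al'}v^\al\p_\ga v^\be+\p_{\al'}v^\be\p_\ga v^\al)g} - \PV\int_{(\R^2)^2}\p_{x_{\al'}}\p_{y_{\be'}}K_{2,v}(x,y)f(x)g(y)\,dx\,dy.
\end{equation*}
The first two terms are plain multiplication operators, so by H\"older's inequality they contribute an operator norm bounded by $\|\nabla v\|_{L^\infty(\R^2)}^2$. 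Thus the substance of the proposition lies in bounding the principal-value form by the same quantity times $\|f\|_{L^2}\|g\|_{L^2}$.

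Next I would use the explicit formula for $-\p_{x_{\al'}}\p_{y_{\be'}}K_{2,v}$ from \cref{lem:Ksig_PDs} and, applying the fundamental theorem of calculus to each occurrence of $v(x)-v(y)$ to rewrite it as $(x-y)\cdot m_{x,y}(\nabla v)$, decompose the kernel as a finite sum of terms of three types:
\begin{equation*}
K^{(2),\ga\ga'}_{2,v,\al\be\al'\be'}(x-y)\,m_{x,y}(\p_\ga v^\al)\,m_{x,y}(\p_{\ga'}v^\be), \quad K^{(1),\nu}_{2,v,\al\be\rho}(x-y)\,m_{x,y}(\p_\nu v^\cdot)\,\p_{\cdot}v^{\cdot}, \quad K^{(0)}_{2,v,\al\be}(x-y)\,\p_{\cdot}v^{\cdot}\,\p_{\cdot}v^{\cdot},
\end{equation*}
as listed after \cref{lem:Tsig_SK}. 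The key verification is that each convolution kernel $K^{(2)}$, $K^{(1)}$, $K^{(0)}$ is even, homogeneous of degree $-2$, smooth on $S^1$, and has vanishing mean over $S^1$: homogeneity and smoothness are immediate from the explicit formulas, and the mean-zero property is a routine trigonometric identity (integrals of odd-order monomials in the components of $z/|z|$ against even functions vanish, and the remaining integrals can be reduced via $|z|^2=z_1^2+z_2^2$ to cancellations among the Kronecker-delta terms). By \cite[Theorem 5.2.10]{grafakos2014c}, each such kernel defines an $L^2$-bounded Calder\'on--Zygmund convolution operator.

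With this verification in hand, I would apply \cref{thm:CJ} (with $n=2$ for the $K^{(2)}$ pieces and $n=1$ for the $K^{(1)}$ pieces; the $K^{(0)}$ pieces are already covered by the standard $L^2$ bound for Calder\'on--Zygmund operators combined with multiplication by $\nabla v$). Each application yields a bound of the form $\|\nabla v\|_{L^\infty(\R^2)}^2\|f\|_{L^2}\|g\|_{L^2}$, since every factor of $m_{x,y}(\nabla v)$ or $\nabla v$ is absorbed into $\|\nabla v\|_{L^\infty(\R^2)}$. Summing over the finitely many index combinations, and combining with the multiplication-operator bound on the first two terms of \cref{lem:Tsig_SK}, yields $\|\p_{\al'}T_{2,v}(\p_{\be'}g)\|_{L^2}\lesssim \|\nabla v\|_{L^\infty(\R^2)}^2\|g\|_{L^2}$ for Schwartz $g$. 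The extension statement then follows by density, together with the observation that every estimate in the chain depends on $v$ only through $\|\nabla v\|_{L^\infty(\R^2)}$. The main obstacle I anticipate is purely algebraic: organizing the many terms in \cref{lem:Ksig_PDs} so that each kernel $K^{(j)}$ is put into a form that transparently satisfies the hypotheses of Christ--Journ\'e (in particular, verifying the zero-average condition on $S^1$ for the degree $-2$ homogeneous pieces without missing any cross terms).
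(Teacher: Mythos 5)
Your proposal follows the paper's own proof almost step by step: start from the Schwartz-kernel identity in \cref{lem:Tsig_SK}, absorb the two multiplication terms via H\"older, use \cref{lem:Ksig_PDs} together with the fundamental theorem of calculus to expand $-\p_{x_{\al'}}\p_{y_{\be'}}K_{2,v}$ into terms built from the degree $-2$ homogeneous kernels $K^{(2)}$, $K^{(1)}$, $K^{(0)}$ times $m_{x,y}$-averages and pointwise factors of $\nabla v$, verify that each kernel is even, smooth away from the origin, and has zero average on $S^1$ so that \cite[Theorem 5.2.10]{grafakos2014c} gives $L^2$-bounded convolution CZ operators, and then invoke \cref{thm:CJ}. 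Your observation that the $K^{(2)}$, $K^{(1)}$, $K^{(0)}$ pieces correspond to applying Christ--Journ\'e with $n=2$, $n=1$, and $n=0$ (i.e.\ a plain CZ operator sandwiched between multiplications) respectively is a correct reading of how the hypotheses of the theorem are met, and every factor of $m_{x,y}(\nabla v)$ or $\nabla v(x)$, $\nabla v(y)$ indeed contributes $\|\nabla v\|_{L^\infty}$, giving the quadratic dependence in the final bound. This is exactly the argument in the paper.
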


\bibliographystyle{siam}
\bibliography{library}
\end{document}